\newcommand{\rosso}[1]{{\color{red}{#1}}}
\DeclareMathOperator{\Hom}{Hom}
\newcommand*{\id}{\textup{id}}%identity map
\numberwithin{equation}{section}
\theoremstyle{plain}
\newtheorem{thm}{Theorem}[section]
\newtheorem{lem}[thm]{Lemma}
\newtheorem{prop}[thm]{Proposition}
 \newtheorem{cor}[thm]{Corollary}
\newtheorem{defi}[thm]{Definition}
\theoremstyle{remark}
\newtheorem{exa}[thm]{Example}
\newtheorem{rem}[thm]{Remark}
\numberwithin{equation}{section}
\newcommand{\ot}{\otimes}
\newcommand{\beq}{\begin{equation}}
\newcommand{\eeq}{\end{equation}}
\newcommand{\cL}{\mathcal{L}}
\newcommand{\CB}{\mathcal{B}}
\newcommand{\C}{\mathcal{C}}
\newcommand{\zero}[1]{{#1}{}_{\scriptscriptstyle{(0)}}}
\newcommand{\one}[1]{{#1}{}_{\scriptscriptstyle{(1)}}}
\newcommand{\two}[1]{{#1}{}_{\scriptscriptstyle{(2)}}}
\newcommand{\three}[1]{{#1}{}_{\scriptscriptstyle{(3)}}}
\newcommand{\four}[1]{{#1}{}_{\scriptscriptstyle{(4)}}}
\newcommand{\five}[1]{{#1}{}_{\scriptscriptstyle{(5)}}}
\newcommand{\six}[1]{{#1}{}_{\scriptscriptstyle{(6)}}}
\newcommand{\seven}[1]{{#1}{}_{\scriptscriptstyle{(7)}}}
\newcommand{\eight}[1]{{#1}{}_{\scriptscriptstyle{(8)}}}
\newcommand{\nine}[1]{{#1}{}_{\scriptscriptstyle{(9)}}}
\newcommand{\ten}[1]{{#1}{}_{\scriptscriptstyle{(10)}}}
\newcommand{\eleven}[1]{{#1}{}_{\scriptscriptstyle{(11)}}}
\newcommand{\twelve}[1]{{#1}{}_{\scriptscriptstyle{(12)}}}
\newcommand{\thirteen}[1]{{#1}{}_{\scriptscriptstyle{(13)}}}
\newcommand{\fourteen}[1]{{#1}{}_{\scriptscriptstyle{(14)}}}
\newcommand{\fifteen}[1]{{#1}{}_{\scriptscriptstyle{(15)}}}
\newcommand{\tuno}[1]{{#1}{}{}^{\scriptscriptstyle{<1>}}}
\newcommand{\tdue}[1]{{#1}{}{}^{\scriptscriptstyle{<2>}}}
\newcommand{\bicross}{{\blacktriangleright\!\!\!\triangleleft}}
\newcommand{\lbiprod}{{>\!\!\!\triangleleft\kern-.33em\cdot}}
\newcommand{\rbiprod}{{\cdot\kern-.33em\triangleright\!\!\!<}}
\newcommand{\rcocross}{{\blacktriangleright\!\!<}}
\renewcommand{\o}{{}_{\scriptscriptstyle{(1)}}}
\renewcommand{\t}{{}_{\scriptscriptstyle{(2)}}}
\renewcommand{\th}{{}_{\scriptscriptstyle{(3)}}}
\newcommand{\fo}{{}_{\scriptscriptstyle{(4)}}}
\newcommand{\uo}{{}^{\scriptscriptstyle{(1)}}}
\newcommand{\ut}{{}^{\scriptscriptstyle{(2)}}}
\newcommand{\la}{{\triangleright}}
\newcommand{\eps}{{\epsilon}}
\DeclareMathOperator{\tens}{\otimes}
\newcommand{\CR}{\mathcal{R}}
\newcommand{\CH}{\mathcal{H}}
\newcommand{\CM}{\mathcal{M}}
\newcommand{\CL}{\mathcal{L}}
\newcommand{\Ss}{{S}}
\newcommand{\can}{{\rm can}}
\begin{document}

\author{Xiao Han and Shahn Majid}
\address{Queen Mary University of London\\
		School of Mathematical Sciences, Mile End Rd, London E1 4NS, UK}

\email{x.h.han@qmul.ac.uk, s.majid@qmul.ac.uk}
%\thanks{\blu{XH supported by Marie Curie Fellowship grant number ***}}

\keywords{Hopf algebroid, bialgebroid, quantum group, quantum principal bundle, Hopf-Galois extension, cocycle cross product, smash product, bicrossproduct. Version 1.4}

\title{Hopf-Galois Extensions and Twisted Hopf algebroids}
%\date{29 February 2020}
%

\begin{abstract} We show that the Ehresmann-Schauenburg bialgebroid of a quantum principal bundle $P$ or Hopf Galois extension with structure quantum group $H$ is in fact a left Hopf algebroid  $\CL(P,H)$.  We show further that if $H$ is coquasitriangular then $\CL(P,H)$ has an antipode map $S$ obeying certain minimal axioms. Trivial quantum principal bundles or cleft Hopf Galois extensions with base $B$ are known to be cocycle cross products $B\#_\sigma H$ for a cocycle-action pair $(\la,\sigma)$ and we look at these of a certain `associative type' where $\la$ is an actual action. In this case also, we show that
the associated left Hopf algebroid has an antipode obeying our minimal axioms. We show that if $\CL$ is any left Hopf algebroid then so is its cotwist $\CL^\varsigma$ as an extension of the previous bialgebroid Drinfeld cotwist theory. We show that in the case of associative type, $\CL(B\#_\sigma H,H)=\CL(B\# H)^{\tilde\sigma}$ for a Hopf algebroid cotwist $\varsigma=\tilde\sigma$. Thus, switching on $\sigma$ of associative type appears at the Hopf algebroid level as a Drinfeld cotwist.  We view the affine quantum group $\widehat{U_q(sl_2)}$ and the quantum Weyl group of $u_q(sl_2)$ as examples of associative type. \end{abstract}

\maketitle
%\tableofcontents
%\parskip = .75 ex

\section{Introduction}

A right Hopf-Galois extension or quantum principal bundle is a right comodule algebra $P$ under a Hopf algebra $H$ such that the induced map $P\tens_B P\to P\tens H$ is an isomorphism of vector spaces, where $B=P^{co(H)}$ is the fixed subalgebra. In noncommutative geometry, one thinks of $P$ as like the algebra of functions on the total space of a principal bundle, $H$ as like the algebra of functions on the fibre group and $B$ as like the functions on the base, except that all algebras may be noncommutative and the group structure of the fibre is now a Hopf algebra or quantum group. This is the starting point for quantum group gauge theory\cite{BrzMa} and we refer to \cite[Chap.~5]{BegMa} for a recent account of the geometric picture, differential structures and connections. In the classical case, one has an associated Ehresmann groupoid of a principal bundle and similarly for any Hopf Galois extension, one has\cite{schau} the `Ehresmann-Schauenburg bialgebroid' which we will show is in fact a left Hopf algebroid and denote $\CL(P, H)$. This is a main result of the paper and appears in Theorem~\ref{ESHopf}.

In general, while the axioms of left bialgebroids \cite{BW,BS}  are somewhat settled at this point (with a similar notion for right bialgebroids) the correct notion of Hopf algebroid is less clear. By left Hopf algebroid we will mean in the sense of \cite{schau1} that a certain `translation map' is invertible. This reduces to existence of a usual Hopf algebra antipode $S$ when the base algebra $B=k$, but in general an antipode map appears to be a stronger notion and there are several candidates. One of these is in \cite{BS}, where one needs both a left and right bialgebroid with $S$ connecting them and $S$ antimultiplicative. This notion is, however, too restrictive for our purposes and we introduce in  Lemma~\ref{lemS}  what we consider a minimal formulation for an invertible antipode $S$, which drops $S$ being antimultiplicative but is strong enough to imply a left Hopf algebroid. We expect but do not show that it is a strictly stronger concept than a left Hopf algebroid and we show that $\CL(P,H)$ has such an antipode at least if $H$ is coquasitriangular.  Another general result of the paper is Theorem~\ref{thm. 2 cocycle twist} about  Drinfeld (co)twists of left Hopf algebroids, extending the bialgebroid version in \cite{Boehm}. We do not find conditions on the cotwists for an antipode to cotwist, but the example in Corollary~\ref{twistS} comes close.

Before these general results, Section~\ref{sec2} starts with the simpler case of a cleft Hopf Galois extension\cite{mont} or `trivial' quantum principal bundle\cite{BrzMa} defined as a Hopf-Galois extension equipped with a cleaving or `trivialisation' map $j:H\to P$ of comodule algebras (where $H$ coacts on itself by the coproduct) which is convolution-invertible as a map from the coalgebra of $H$ to the algebra $P$. One of the first observations to be made here is that, unlike classical geometry, there is a potentially nontrivial nonAbelian cohomology associated to such seemingly trivial extensions, denoted $\CH^2(H,B)$ in \cite{Ma:non}\cite{Ma:book}. Here it is known that a cleft extension allows for an identification
\[   P\cong B\#_\sigma H\]
as comodule algebras for certain `cocycle data' $(\la,\sigma)$ where $\la:H\tens B\to B$  is a measuring (but not necessarily an action) and $\sigma:H\tens H\to B$ obeying certain conditions. Gauge transformation by convolution invertible maps $u:H\to B$ changes $(\la,\sigma)$ and the nonAbelian cohomology can be defined as cocycles modulo these. In the special case where $H$ is cocommutative,  it was shown in \cite{D89} that there is a bijective correspondence between the equivalence classes of $H$-cleft Hopf Galois extensions and the usual cohomology group $\mathcal{H}^{2}(H, Z(B))$, where $Z(B)$ is the centre of $B$ and $\sigma$ is the linear extension of a group 2-cocycle in the case where $H$ is a group algebra. Another special case is where $B=k$ is the ground field and in this case $\sigma$ is a Drinfeld 2-cocycle as in the theory of Hopf algebra cotwists\cite[Chap 2]{Ma:book}. We introduce and study a joint  `associative type' generalisation of these two special cases where $\la$ is in fact an action so that we also have an ordinary smash product $B\#H$.

Cleft extensions are then studied further in Section~\ref{sec3} to illustrate the Hopf algebroid theory more explicitly. First we give a version $B^e\#_\sigma H$ of the Ehresmann-Schauenburg bialgebroid in the cleft cocycle extension case directly on the vector of $B^e\tens H$ where $B^e=B\tens B^{op}$. From our general results,  this is necessarily a left Hopf algebroid but we show that it has an antipode at least when $\sigma$ is of associative type. We also show in the associative type case that there is a Hopf algebroid cotwist $\tilde\sigma$ on the Hopf algebroid $B^e\#H$ such that  $B^e\#_\sigma H=(B^e\#H)^{\tilde\sigma}$ is a Hopf algebroid twist. Thus, the introduction of $\sigma$ at the cocycle level corresponds to a Drinfeld cotwist at the Hopf algebroid level. We also explore how our constructions change both under a Drinfeld cotwist of $\chi$ on $H$ and under a gauge transformation $u$ from our two points of view.

Section~\ref{sec4} contains our results for general quantum principal bundles and the paper then concludes with some examples in Section~\ref{sec5}. Of course, many Hopf-Galois extensions are known but the new notion of associative type is less clear and we show how they arise from certain cocycle bicrossproducts of interest in the theory of quantum groups.

\section{Cocycle cleft extensions and associative type}\label{sec2}

This section starts with recalling basic algebraic preliminaries, then studies cocycle extensions of associative type. We work over a field $k$.

\subsection{Basic algebraic preliminaries} We recall that a Hopf algebra $H$ means a unital algebra which is also a counital coalgebra with coproduct $\Delta$ and counit $\epsilon$, such that these are algebra maps  and there is an antipode $S$ obeying $S(h\o)h\t=1\eps(h)=h\o S(h\t)$ for all $h\in H$. We use here the sumless Sweedler notation $\Delta(h)=\one{h}\otimes \two{h}$ for all $h\in H$. We refer to texts such as \cite{Swe,Ma:book} for more details.  If  $C$ is a coalgebra and $P$ is an algebra, the space $\Hom_k(C,P)$ is unital associative algebra with the `convolution' product  $\phi\star\psi(c):=\phi(\one{c})\psi(\two{c})$, for all $\phi, \psi \Hom_k(C,P)$ and from this point of view, $S$ is the convolution-inverse of the identity map $\id:H\to H$.

Given a Hopf algebra $H$, a \textit{left $H$-module algebra} is an algebra $B$ with a left action $\la$ of $H$ such that $    h\triangleright(ab)=(\one{h}\triangleright a)(\two{h}\triangleright b)$ for all $a, b\in B$ and $h\in H$, and $h\triangleright 1=\epsilon(h)1$. In this case, one has a smash or cross product algebra $B\#H$ built on $B\tens H$ with product $(a\#h)(a'\#g)=a(\one{h}\triangleright a')\#\two{h}g$ for all $a, a'\in B$ and $g, h\in H$. Similarly, a  \textit{right $H$-comodule algebra} is an algebra $P$ with a right coaction $\Delta_R: P\to P\otimes H$ (the axioms are dual to those of a module) such that $\Delta_R$ is an algebra map, where $P\tens H$ has the tensor product algebra structure. We use the sumless Sweedler notation $\Delta_R(p)=\zero{p}\otimes \one{p}$.

The category of left modules ${}_H\CM$ and the category of right comodules $\CM^H$ are both monoidal with finite-dimensional objects rigid. For example, if $V$ and $W$ are right comodules the $V\otimes W$ is  right $H$-comodule     $\Delta_{R, V\tens W}(v\otimes w):=\zero{v}\otimes \zero{w}\otimes \one{v}\one{w}$ for all $v\in V$ and $w\in W$. An $H$-module algebra $B$ just means an algebra in ${}_H\CM$ and a comodule algebra $P$ just means an algebra in $\CM^H$.

Finally, if $H$ is a Hopf algebra and $\chi:H\tens H\to k$ a Drinfeld 2-cocycle or {\em cotwist} in the sense of convolution-invertible and obeying\cite{Ma:book}
\[  \chi(g\o,  f\o)\chi(h, g\t f\t)=\chi(h\o, g\o)\chi(h\o g\o, f),\quad \chi(1, h)=\chi(h, 1)=\eps(h) \]
then one has a new Hopf algebra $H^\chi$ on the same vector space as $H$ but with a modified product
\[  h\bullet_\chi g= \chi(h\o, g\o)h\t g\t\chi^{-1}(h\th, g\th)\]
such that the category $\CM^{H^\chi}$ is monoidally equivalent to $\CM^H$\cite{Ma:book}. Moreover, if $P$ is an $H$-comodule algebra then there is a 1-sided cotwisted comodule algebra $P_\chi$ with new product\cite[Sec.~2.3]{Ma:book}
\begin{align}\label{eqn. P cotwist}
    p\cdot_{\chi}q:=\zero{p}\zero{q}\chi^{-1}(\one{p}, \one{q}),
\end{align}
for all $p, q\in P$ and the same coaction on the underlying vector space.

\subsection{Cocycle cross product algebras revisited}\label{seccocy}

More generally, $\la:H\tens B\to B$ is a measuring of a Hopf algebra $H$ on an algebra $B$ if $h\triangleright(ab)=(\one{h}\triangleright a)(\two{h}\triangleright b)$ for all $a, b\in B$ and $h\in H$, and $h\triangleright 1=\epsilon(h)1$, i.e. like a module algebra but without requiring $\la$ to be an action. Given such a measuring $\la$, we assume a `2-cocycle' in the sense of a convolution-invertible map $\sigma:H\tens H\to B$ such that
\begin{itemize}
    \item [(1)]$1\triangleright b=b$,
    \item [(2)]$h\triangleright(g\triangleright b)=\sigma(\one{h}, \one{g})(\two{h}\two{g}\triangleright b)\sigma^{-1}(\three{h}, \three{g})$,
    \item [(3)] $\sigma(h, 1)=\sigma(1, h)=\epsilon(h)1$,
    \item [(4)] $(\one{h}\triangleright\sigma(\one{g},\one{f}))\sigma(\two{h}, \two{g}\two{f})=\sigma(\one{h}, \one{g})\sigma(\two{h}\two{g}, f)$,
\end{itemize}
for all $h, g, f\in H$ and $b\in B$. Then \cite{DT86} showed that there is an {\em cocycle cross product} algebra structure on $B\#_{\sigma} H$ built on $B\otimes H$ as vector space, with the product
\begin{align}\label{equation. product of twisted crossed product}
    (b\#_{\sigma} h)(c\#_{\sigma} g)=b(\one{h}\triangleright c)\sigma(\two{h}, \one{g})\#_{\sigma} \three{h}\two{g},
\end{align}
for all $h, g\in H$ and $b, c\in B$.  The converse is also true: given that $H$ measures $B$ and $\sigma: H\otimes H\to B$ is a convolution invertible linear map, these conditions are needed for the formula given to define an associative product with unit $1\#_{\sigma}1$. We have the following useful lemma:

\begin{lem}\label{lemma. unital twist} cf. \cite[Prop. 7.2.7]{mont} Given the data for a cocycle cross product $B\#_{\sigma}H$,  we have the following properties of $\sigma$ for all $h,g,f\in H$:
\begin{itemize}
    \item [(1)] $\sigma^{-1}(\one{h}, \one{g}\one{f})(\two{h}\triangleright \sigma^{-1}(\two{g}, \two{f}))=\sigma^{-1}(\one{h}\one{g}, f)\sigma^{-1}(\two{h}, \two{g})$,
    \item[(2)] $h\triangleright \sigma(g, f)=\sigma(\one{h}, \one{g})\sigma(\two{h}\two{g}, \one{f})\sigma^{-1}(\three{h}, \three{g}\two{f})$,
    \item[(3)] $h\triangleright \sigma^{-1}(g, f)=\sigma(\one{h}, \one{g}\one{f})\sigma^{-1}(\two{h}\two{g}, \two{f})\sigma^{-1}(\three{h}, \three{g})$,
    \item[(4)] $(\one{h}\triangleright \sigma^{-1}(S(\four{h}), \five{h}))\sigma(\two{h}, S(\three{h}))=\epsilon(h)1$.
    \item[(5)]$\sigma^{-1}(S(\one{h})S(\one{g}), \two{h}\two{g})=\\
    \sigma^{-1}(S(\three{h}), \four{h})$ $(S(\two{h})\triangleright \sigma^{-1}(S(\two{g}), \three{g}\five{h}))$ $\sigma(S(\one{h}), S(\one{g}))$.
    \item[(6)] $\sigma^{-1}(S(\two{h}), \three{h})S(\one{h})\la\sigma(\four{h}, S(\five{h}))=\eps(h)1$.
\end{itemize}
\end{lem}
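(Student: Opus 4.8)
The plan is to compute throughout in the convolution algebras $\Hom_k(H^{\ot n},B)$, where $H^{\ot n}$ carries the tensor-product coalgebra structure and $B$ the given algebra, the convolution unit being $x\mapsto\eps(x)1$. Write $d_0\sigma,\dots,d_3\sigma\colon H^{\ot 3}\to B$ for the four maps sending $h\ot g\ot f$ to $h\la\sigma(g,f)$, $\sigma(hg,f)$, $\sigma(h,gf)$ and $\sigma(h,g)\eps(f)$ respectively. The observation that drives everything is that the convolution inverse of each $d_i\sigma$ is the corresponding $d_i\sigma^{-1}$, the same face built from $\sigma^{-1}$. For $d_1,d_2,d_3$ this is immediate from convolution-invertibility of $\sigma$ and the fact that the product $H\ot H\to H$ is a coalgebra map; for $d_0$ it is exactly the measuring property $h\la(ab)=(\one h\la a)(\two h\la b)$ that makes $h\la\sigma^{-1}(g,f)$ the inverse of $h\la\sigma(g,f)$. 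With this dictionary the cocycle axiom (condition~(4) of the hypotheses on $\sigma$) reads simply as $d_0\sigma\star d_2\sigma=d_3\sigma\star d_1\sigma$.

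Identity~(2) then follows by right-convolving this relation with $(d_2\sigma)^{-1}=d_2\sigma^{-1}$, which cancels $d_2\sigma$ on the left and leaves $d_0\sigma=d_3\sigma\star d_1\sigma\star d_2\sigma^{-1}$; evaluating the right-hand side gives precisely the stated formula for $h\la\sigma(g,f)$. Identity~(3) is the convolution inverse of~(2): since $d_0\sigma^{-1}=(d_0\sigma)^{-1}$ and inversion reverses the order of a product, $d_0\sigma^{-1}=d_2\sigma\star d_1\sigma^{-1}\star d_3\sigma^{-1}$, which is the stated formula for $h\la\sigma^{-1}(g,f)$. Finally identity~(1) is obtained by inverting the cocycle relation itself, giving $d_2\sigma^{-1}\star d_0\sigma^{-1}=d_1\sigma^{-1}\star d_3\sigma^{-1}$. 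None of (1)--(3) uses the antipode.

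The remaining three identities involve $S$, and here I would substitute the closed formulas~(2) and~(3) for the inner action terms and then collapse. For identity~(4) I would expand $\one h\la\sigma^{-1}(S(\four h),\five h)$ by identity~(3), using that $S$ is an anti-coalgebra map, $\Delta S=(S\ot S)\circ\text{flip}\circ\Delta$, to distribute the coproduct; two of the resulting factors then take the form $\sigma^{-1}(\one n,\one{S(m)})\,\sigma(\two n,\two{S(m)})$ for suitable legs $n,m$, which is a convolution product $\sigma^{-1}\star\sigma$ and so contracts to $\eps(n)\eps(m)1$. Iterating such cancellations together with the antipode axioms $S(\one h)\two h=\eps(h)1=\one h S(\two h)$ and the normalisation $\sigma(1,-)=\sigma(-,1)=\eps(-)1$ reduces the expression to $\eps(h)1$; identity~(6) is handled the same way, expanding $S(\one h)\la\sigma(\four h,S(\five h))$ by identity~(2) instead. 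The only content is the leg-bookkeeping that aligns the cancellations.

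The main obstacle is identity~(5), which is genuinely not a telescoping identity: it rewrites $\sigma^{-1}$ on $S$ of a product $hg$ in terms of the separate data for $h$ and $g$, and combines the $\sigma^{-1}$-cocycle relation~(1) with the anti-comultiplicativity of $S$ in an essential way. My approach would be to eliminate the action on the right-hand side by expanding $S(\two h)\la\sigma^{-1}(S(\two g),\three g\five h)$ via identity~(3), reducing~(5) to an identity purely among products of $\sigma$ and $\sigma^{-1}$ evaluated at antipoded Sweedler legs of $h$ and $g$; one then reorganises these using~(1) and the antipode axioms to reassemble $S(\one h)S(\one g)$ and $\two h\two g$ as the two arguments of a single $\sigma^{-1}$. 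A useful consistency check is the specialisation $g=1$, under which both sides collapse to $\sigma^{-1}(S(\one h),\two h)$. The difficulty is entirely in verifying that the many Sweedler legs recombine correctly, rather than in any further structural idea.
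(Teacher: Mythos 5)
Your proposal is correct and follows essentially the same route as the paper: identities (1)--(3) are obtained by convolution-inverting the 2-cocycle axiom (the paper states this without your $d_i$ bookkeeping but it is the same argument), and (4)--(6) are proved exactly as you describe, by expanding the acted-upon $\sigma^{\pm 1}$ via (2) or (3) and telescoping the resulting $\sigma^{-1}\star\sigma$ pairs using anti-comultiplicativity of $S$, the antipode axioms and normalisation. The only small remark is that for (5) the paper's computation telescopes completely after the single expansion via (3), so identity (1) is not in fact needed there.
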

\begin{proof} (1) and (2) can be derived by applying the convolution inverse on both side of the 2-cocycle conditions, and (3) can be derived from (1). For (4),
\begin{align*}
    (\one{h}\triangleright &\sigma^{-1}(S(\four{h}), \five{h}))\sigma(\two{h}, \three{h})\\
    &=\sigma(\one{h}, S(\eight{h})\nine{h})\sigma^{-1}(\two{h}S(\seven{h}), \ten{h})\sigma^{-1}(\three{h}, S(\six{h}))\sigma(\four{h}, S(\five{h}))\\
    &=\sigma(\one{h}, S(\four{h})\five{h})\sigma^{-1}(\two{h}S(\three{h}), \six{h})\\
    &=\epsilon(h)1,
\end{align*}
for any $h\in H$, and similarly for (6). For (5)
\begin{align*}
    \sigma^{-1}(S(\three{h}), & \four{h})(S(\two{h})\triangleright \sigma^{-1}(S(\two{g}), \three{g}\five{h}))\sigma(S(\one{h}), S(\one{g}))\\
    =&\sigma^{-1}(S(\five{h}), \six{h})\sigma(S(\four{h}), S(\four{g})\five{g}\seven{h})\sigma^{-1}(S(\three{h})S(\three{g}),\six{g}\eight{h})\\
    &\sigma^{-1}(S(\two{h}),S(\two{g}))\sigma(S(\one{h}), S(\one{g}))\\
    =&\sigma^{-1}(S(\one{h})S(\one{g}), \two{h}\two{g}),
\end{align*}
where the first step use (3).
\end{proof}

From an extension theory point of view, a cleft extension of an algebra $B$ by a Hopf algebra $H$ is a comodule algebra $P$, a convolution-invertible  unital comodule map $j:H\to P$ and an algebra inclusion,
\[ B\hookrightarrow  P \leftarrow H,\]
such that the tensor product of these maps combined with the product of $P$ gives an isomorphism $B\tens H\to P$  of right $H$-comodules and left $B$-modules. We use the canonical left $B$-module and right $H$-comodule structure on $B\tens H$ and the assertion entails that $j$ is injective. It is easy to see that  $B\#_\sigma H$ meets these conditions and conversely, given the extension data, one can show that $B=P^{co(H)}$ is the invariant subalgebra and construct $(\la,\sigma)$ by\cite{Ma:non}\cite[Prop.~6.3.6]{Ma:book}
\begin{equation}\label{equ. cleaving map to 2-cocycle} h\la b=j(h\o)b j^{-1}(h\t),\quad \sigma(h\tens g)=j(h\o)j(g\o)j^{-1}(h\t g\t) \end{equation}
with images in $B$ due to the assumed equivariance of $j$. A map between extensions is an $H$-comodule map which is the identity on $B$. Moreover, if $(\la,\sigma)$ is a cocycle then
\begin{align}\label{equ. equivalent corssed product condition 1}
    h\triangleright^u b=u^{-1}(\one{h})(\two{h}\triangleright b)u(\three{h}),
\end{align}
\begin{align}\label{equ. equivalent corssed product condition 2}
    \sigma^u(h, g)=u^{-1}(\one{h})(\two{h}\triangleright u^{-1}(\one{g}))\sigma(\three{h}, \two{g})u(\four{h}\three{g}).
\end{align}
for a convolution-invertible unital map $u: H\to B$ is another, and  $(\la^u,\sigma^u)$ gives an isomorphic cocycle extension by
\[ \Theta_u:B\#_{\sigma^u}H\to B\#_{\sigma}H,\quad \Theta(b\#_{\sigma^u} h)=bu^{-1}(h\o)\#_\sigma h\t.\]
Conversely, isomorphic extensions are related in this way\cite{Ma:non}\cite[Props.~6.3.4,~6.3.5]{Ma:book}. This therefore defines a kind of `nonAbelian cohomology'  theory $\CH^2(H,B)$ realised as cocycles modulo such `gauge equivalence'. We will recall in Section~\ref{sec4} how such cleft extensions are Hopf-Galois and hence can be viewed as trivial quantum principal bundles.

Cleft cocycle extensions and their duals have also been applied in a theory of cocycle bicrossproduct extensions\cite{Ma:mor}\cite{Ma:book} where $B$ and $P=B{}_\sigma\bicross{}^\psi H$ are Hopf algebras and the extension is both cleft and  `cocleft'. This will be a source of examples in Section~\ref{sec5}.  Finally, a different operation one can perform on cocycle cross products is a Drinfeld cotwist.

\begin{prop}\label{prop. twist cleft Galois object}
If $(\la,\sigma)$ is a cocycle  for $B\#_\sigma H$ then $\la_\chi=\la$ and $\sigma_\chi=\sigma*\chi^{-1}$ are cocycle data for $B\#_{\sigma_\chi}H^\chi$. Moreover, $\CH^2(H^\chi, B)\cong \CH^2(H,B)$.
\end{prop}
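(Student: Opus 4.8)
The plan is to avoid checking the conditions (1)--(4) by hand and instead recognise $B\#_{\sigma_\chi}H^\chi$ as the Drinfeld cotwist $(B\#_\sigma H)_\chi$ of the comodule algebra $P=B\#_\sigma H$, for which the cocycle conditions then come for free. First I would record the two easy points: since $H^\chi$ has the same coalgebra and unit as $H$ and $\la_\chi=\la$, the measuring axioms for $\la_\chi$ over $H^\chi$ are literally those for $\la$ over $H$, so $\la_\chi$ is a measuring of $H^\chi$ on $B$; and $\sigma_\chi=\sigma*\chi^{-1}$ is convolution-invertible with inverse $\chi*\sigma^{-1}$ because $\sigma$ and $\chi$ are.

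The core step is a comparison of products on $B\tens H$. On one side is the cocycle cross product formula (\ref{equation. product of twisted crossed product}) for $B\#_{\sigma_\chi}H^\chi$, in which the trailing multiplication and the cocycle are those of $H^\chi$, i.e. $h_{(3)}\bullet_\chi g_{(2)}$ and $\sigma_\chi=\sigma*\chi^{-1}$. On the other side is the cotwisted comodule-algebra product $\cdot_\chi$ of (\ref{eqn. P cotwist}) applied to $P=B\#_\sigma H$ with its standard coaction on the $H$-leg. Expanding $\bullet_\chi$ and $\sigma_\chi$ in Sweedler notation, the factor $\chi$ produced by $\bullet_\chi$ meets the factor $\chi^{-1}$ built into $\sigma_\chi$, and they cancel by $\chi^{-1}*\chi=\eps\tens\eps$; what survives is exactly $\cdot_\chi$. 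Thus the two products agree and $B\#_{\sigma_\chi}H^\chi=(B\#_\sigma H)_\chi$ on the nose. Since a Drinfeld cotwist of a comodule algebra is again an (associative, unital) comodule algebra, as recalled before (\ref{eqn. P cotwist}), the formula (\ref{equation. product of twisted crossed product}) for $(\la_\chi,\sigma_\chi)$ defines an associative unital product; by the converse half of the Doi--Takeuchi result quoted after the list of conditions, this forces $(\la_\chi,\sigma_\chi)$ to satisfy (1)--(4) over $H^\chi$. This proves the first assertion.

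For the isomorphism $\CH^2(H^\chi,B)\cong\CH^2(H,B)$ I would argue conceptually rather than manipulate the gauge formulas (\ref{equ. equivalent corssed product condition 1})--(\ref{equ. equivalent corssed product condition 2}) directly. The cotwist is a monoidal equivalence $\CM^H\simeq\CM^{H^\chi}$ that is the identity on underlying comodules and on linear maps, changing only how products are formed; hence it carries $H$-comodule algebras to $H^\chi$-comodule algebras and comodule-algebra maps to comodule-algebra maps. It preserves the base: the coinvariants of $(B\#_\sigma H)_\chi$ are still $B\#1\cong B$, and $\cdot_\chi$ restricts to the original product there since $\chi^{-1}(1,1)=1$. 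It therefore sends a cleft extension of $B$ by $H$ to a cleft extension of $B$ by $H^\chi$, and sends an extension isomorphism (a comodule-algebra map that is the identity on $B$) to one of the same kind, so it descends to gauge-equivalence classes. Cotwisting by $\chi^{-1}$, using $(H^\chi)^{\chi^{-1}}=H$ and $(\sigma*\chi^{-1})*\chi=\sigma$, is an inverse, whence the induced map $[(\la,\sigma)]\mapsto[(\la,\sigma*\chi^{-1})]$ is a bijection.

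I expect the main obstacle to be the Sweedler bookkeeping in the product comparison: one must allocate enough coproduct factors so that the $\chi$ from $\bullet_\chi$ and the $\chi^{-1}$ inside $\sigma_\chi$ sit in adjacent, cancellable positions, and keep track of which multiplication ($H$ or $H^\chi$) appears where. A direct verification of (1)--(4) is also available as a cross-check; there condition (4) is the only place where the Drinfeld $2$-cocycle identity for $\chi$ itself is needed, the others following from the single cancellation $\chi^{-1}*\chi=\eps\tens\eps$.
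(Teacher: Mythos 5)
Your proof is correct, and it reaches the same underlying identification as the paper -- namely that $B\#_{\sigma_\chi}H^\chi$ is the one-sided cotwist $(B\#_\sigma H)_\chi$ of (\ref{eqn. P cotwist}) -- but by a different mechanism at both stages. The paper works through the cleaving map: it sets $j_\chi=j$, computes its convolution inverse $j_\chi^{-1}(h)=j^{-1}(\three{h})\chi(\one{h},S(\two{h}))$ for the twisted product, and then extracts $(\la_\chi,\sigma_\chi)$ from the general formulas (\ref{equ. cleaving map to 2-cocycle}), the cocycle conditions being inherited because $P_\chi$ with cleaving map $j_\chi$ is again a cleft extension. You instead match the product formula (\ref{equation. product of twisted crossed product}) for the data $(\la,\sigma*\chi^{-1})$ over $H^\chi$ directly against $\cdot_\chi$ (the cancellation $\chi^{-1}*\chi$ between the cocycle and the $\bullet_\chi$ product is exactly right) and then invoke the converse half of the Doi--Takeuchi statement to deduce conditions (1)--(4); this spares you the computation of $j_\chi^{-1}$ and of $\triangleright_\chi$, at the cost of leaning on the converse as a black box. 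For the cohomology isomorphism the paper verifies the explicit identity $(\sigma^u)_\chi=(\sigma_\chi)^u$ by a Sweedler computation, whereas you argue functorially that the cotwist preserves comodule-algebra maps, coinvariants and hence extension isomorphisms, with $\chi^{-1}$ providing the inverse; your argument is cleaner and more conceptual, but it does rely on the equivalence (recalled in Section~\ref{seccocy}) between gauge equivalence of cocycles and isomorphism of extensions, and on the standard facts that $\chi^{-1}$ is a cocycle for $H^\chi$ with $(H^\chi)^{\chi^{-1}}=H$, which you should cite rather than merely assert. The paper's explicit identity has the added benefit of exhibiting the bijection concretely on representatives, which is used implicitly elsewhere.
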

\begin{proof} One can simply check that the twisted data meets the conditions for a cocycle. Or, from the point of view of cleft extensions, assume $j: H\to P$ is the cleaving map of the cleft extension. We define $j_{\chi}=j$ on the same underlying vector space of $H$  now viewed as a $H^\chi$-comodule map. We first check that its inverse is given by
\begin{align*}     j_{\chi}^{-1}(h):=j^{-1}(\three{h})\chi(\one{h}, S(\two{h})),
\end{align*}
where $j^{-1}$ is the inverse of the original cleaving map. On the one hand,
\begin{align*}
    j_{\chi}^{-1}\ast j_{\chi}(h)=&\chi(\one{h}, S(\two{h}))j^{-1}(\three{h})\cdot_{\chi}j(\four{h})\\
    =&\chi(\one{h}, S(\two{h}))j^{-1}(\four{h})j(\five{h})\chi^{-1}(S(\three{h}), \six{h})\\
    =&\chi(\one{h}, S(\two{h}))\chi^{-1}(S(\three{h}), \four{h})=\eps(h),
\end{align*}
where the second step uses the fact that $\zero{j^{-1}(h)}\otimes \one{j^{-1}(h)}=j^{-1}(\two{h})\otimes S(\one{h})$ for any $h\in H$, the last step uses the fact that $\chi$ is a 2-cocycle on the Hopf algebra $H$. On the other hand,
\begin{align*}
    j_{\chi}\ast j_{\chi}^{-1} (h)=&j(\one{h})\cdot_{\chi}j^{-1}(\four{h})\chi(\two{h}, S(\three{h}))\\
    =&j(\one{h})j^{-1}(\six{h})\chi(\two{h}, S(\five{h}))\chi(\three{h}, S(\four{h}))=\eps(h),
\end{align*}
According to (\ref{equ. cleaving map to 2-cocycle}), the deformed 2-cocycle action $\triangleright_{\chi}$ is given by
\begin{align*}
    h\triangleright_{\chi} b=&j_{\chi}(\one{h})\cdot_{\chi}b\cdot_{\chi}j^{-1}_{\chi}(\two{h})\\
    =&(1\#_{\sigma}\one{h})\cdot_{\chi}(b\chi(\two{h}, S(\three{h}))j^{-1}(\four{h}))\\
    =&(1\#_{\sigma}\one{h})\cdot_{\chi}(b\chi(\two{h}, S(\three{h}))\sigma^{-1}(S(\five{h}), \six{h})\#_{\sigma}S(\four{h}))\\
    =&(1\#_{\sigma}\one{h})(b\chi(\three{h}, S(\four{h}))\sigma^{-1}(S(\seven{h}), \eight{h})\#_{\sigma}S(\six{h}))\chi^{-1}(\two{h}, S(\five{h}))\\
    =&(1\#_{\sigma}\one{h})(b\sigma^{-1}(S(\three{h}), \four{h})\#_{\sigma}S(\two{h}))\\
    =&(\one{h}\triangleright b)(\two{h}\triangleright \sigma^{-1}(S(\five{h}), \six{h}))\sigma(\three{h}, S(\four{h}))\#_{\sigma} 1\\
    =&h\triangleright b\#_{\sigma} 1,
\end{align*}
where the last step uses Proposition \ref{lemma. unital twist}. Hence, the twisted action is unchanged.
And the deformed 2-cocycle $\sigma_{\chi}$ is given by
\begin{align*}
    \sigma_{\chi}(h, g)=&j_{\chi}(\one{h})\cdot_{\chi}j_{\chi}(\one{g})\cdot_{\chi}j^{-1}_{\chi}(\two{h}\cdot_{\chi}\two{g})\\
    =&(j(\one{h})j(\one{g})\chi^{-1}(\two{h}, \two{g}))\cdot_{\chi}j^{-1}_{\chi}(\chi(\three{h}, \three{g})\four{h}\four{g}\chi^{-1}(\five{h}, \five{g}))\\
    =&(j(\one{h})j(\one{g}))\cdot_{\chi}j^{-1}_{\chi}(\two{h}\two{g})\chi^{-1}(\three{h}, \three{g})\\
    =&(j(\one{h})j(\one{g}))\cdot_{\chi}(\chi(\two{h}\two{g}, S(\three{h}\three{g}))j^{-1}(\four{h}\four{g}))\chi^{-1}(\five{h}, \five{g})\\
    =&j(\one{h})j(\one{g})j^{-1}(\six{h}\six{g})\chi^{-1}(\two{h}\two{g}, S(\five{h}\five{g}))\chi(\three{h}\three{g}, S(\four{h}\four{g}))\chi^{-1}(\seven{h}, \seven{g})\\
    =&j(\one{h})j(\one{g})j^{-1}(\two{h}\two{g})\chi^{-1}(\three{h}, \three{g})\\
    =&\sigma(\one{h}, \one{g})\chi^{-1}(\two{h}, \two{g})
\end{align*}
for any $h, g\in H$ and $b\in B$.

For the last part, if we we gauge transform by $u:H\to B$ then $(\sigma^u)_\chi = (\sigma_\chi)^{u}$. Indeed,
\begin{align*}
    (\sigma^u)_\chi(h, g)=&u^{-1}(\one{h})(\two{h}\triangleright u^{-1}(\one{g}))\sigma(\three{h}, \two{g})u(\four{h}\three{g})\chi^{-1}(\five{h}, \four{g})\\
    =&u^{-1}(\one{h})(\two{h}\triangleright u^{-1}(\one{g}))\sigma(\three{h}, \two{g})\chi^{-1}(\four{h}, \five{g})\chi(\six{h}, \five{g})u(\seven{h}\six{g})\chi^{-1}(\eight{h}, \seven{g})\\
    =&u^{-1}(\one{h})(\two{h}\triangleright u^{-1}(\one{g}))\sigma(\three{h}, \two{g})\chi^{-1}(\four{h}, \five{g})u(\five{h}\cdot_{\chi}\four{g})\\
    =&(\sigma_{\chi})^{u}(h,g),
\end{align*}
so that the equivalence classes are in bijective correspondence under the cotwist.
\end{proof}

\subsection{Cocycles of associative type}

\begin{defi}\label{def: assoc type}
We say that a crossed product $B\#_{\sigma}H$ is of \textup{associative type} or  $(\la,\sigma)$ is a cocycle of associative type, if $B$ is an $H$-module algebra under $\la$.
\end{defi}
It is easy to see that this happens iff
\begin{align}\label{eqn. assoc type}
    \sigma(\one{h}, \one{g})((\two{h}\two{g})\triangleright b)=((\one{h}\one{g})\triangleright b) \sigma(\two{h}, \two{g}),
\end{align}
for all $b\in B$, $h, g\in H$. Clearly, in the associative type case,  $B$ is an $H$-module algebra under the same left action, and $B\#H$ is well defined. A lemma needed later is

\begin{lem}\label{lem: associative type} Let $B\#_{\sigma}H$ be of associative type. Then
\begin{align*}
    (S(\three{h})\triangleright b)\sigma^{-1}(S(\one{h}), \two{h})=\sigma^{-1}(S(\two{h}), \three{h})(S(\one{h})\triangleright b)
\end{align*}
\end{lem}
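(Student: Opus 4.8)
The plan is to first promote the associative-type identity (\ref{eqn. assoc type}) to a statement about $\sigma^{-1}$, and then to obtain the claim from a single structured substitution into that statement.

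First I would establish the auxiliary \emph{inverse intertwining} identity
\[
\sigma^{-1}(\one{k},\one{l})\big((\two{k}\,\two{l})\triangleright b\big)=\big((\one{k}\,\one{l})\triangleright b\big)\sigma^{-1}(\two{k},\two{l}),
\]
valid for all $k,l\in H$ and $b\in B$. This follows from (\ref{eqn. assoc type}) by an ``insert the identity'' argument in the convolution algebra $\Hom_k(H\tens H,B)$: starting from the right-hand side $\big((\one{k}\,\one{l})\triangleright b\big)\sigma^{-1}(\two{k},\two{l})$, I would insert $\eps=\sigma^{-1}\star\sigma$ on the far left, apply (\ref{eqn. assoc type}) to the resulting middle factor $\sigma(\two{k},\two{l})\big((\three{k}\,\three{l})\triangleright b\big)$, and finally cancel the adjacent pair $\sigma(\three{k},\three{l})\sigma^{-1}(\four{k},\four{l})=\eps$ using $\sigma\star\sigma^{-1}=\eps$.

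Next I would apply this identity with the choice $k=S(\one{h})$, $l=\two{h}$ and with $b$ replaced by $S(\three{h})\triangleright b$, all coproduct legs coming from a single iterated coproduct of $h$. On the left-hand side the factor $\one{k}\,\one{l}$ is of the form $S(x\o)\,x\t$ on the two inner legs straddling the $k$--$l$ boundary, so the antipode axiom collapses it to $\eps$ and leaves precisely $(S(\three{h})\triangleright b)\,\sigma^{-1}(S(\one{h}),\two{h})$. On the right-hand side the action factor is a nested action $\big(S(\one{h})\,\four{h}\big)\triangleright\big(S(\five{h})\triangleright b\big)$; here I would invoke the hypothesis that $\triangleright$ is an actual action (associative type) to merge it into $\big(S(\one{h})\,\four{h}\,S(\five{h})\big)\triangleright b$, after which $\four{h}\,S(\five{h})=\eps$ collapses by the antipode axiom to give $\sigma^{-1}(S(\two{h}),\three{h})\,(S(\one{h})\triangleright b)$. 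Comparing the two sides yields the stated identity.

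The only real subtlety is the Sweedler bookkeeping: one must retain enough coproduct legs of $h$ so that the two antipode cancellations land on the correct adjacent pairs -- one straddling the $k$--$l$ boundary on the left, the other between $\four{h}$ and $S(\five{h})$ on the right. I expect the merging of the nested actions on the right-hand side to be the conceptual crux: this is the one step that genuinely requires $\triangleright$ to be an action rather than a mere measuring, so the associative-type hypothesis enters there in an essential way (beyond its use in deriving the inverse intertwining identity), exactly as one would want for a lemma that has no analogue for general cocycle cross products.
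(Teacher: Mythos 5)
Your proof is correct and follows essentially the same route as the paper: both convolution-invert the associative-type identity (\ref{eqn. assoc type}) to get its $\sigma^{-1}$ version, specialize the two arguments to antipoded early legs and later legs of $h$, and then merge the nested actions via the action property together with the cancellation $\four{h}S(\five{h})=\eps(\four{h})1$. The only cosmetic difference is that the paper first records $b\,\sigma^{-1}(S(\one{h}),\two{h})=\sigma^{-1}(S(\two{h}),\three{h})\,((S(\one{h})\four{h})\triangleright b)$ for arbitrary $b$ and then substitutes $b\mapsto S(\five{h})\triangleright b$, whereas you perform the substitution inside a single application of the general intertwining identity.
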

\begin{proof} Since $B\#_{\sigma}H$ is of associative type, we have
\begin{align*}
    b\sigma^{-1}(S(\one{h}), \two{h})=((S(\two{h})\three{h})\triangleright b)\sigma^{-1}(S(\one{h}), \four{h})=\sigma^{-1}(S(\two{h}), \three{h})(S(\one{h})\four{h})\triangleright b).
\end{align*}
Then
\begin{align*}
    (S(\three{h})\triangleright b)\sigma^{-1}(S(\one{h}), \two{h})=&\sigma^{-1}(S(\two{h}), \three{h})((S(\one{h})\four{h})\triangleright (S(\five{h})\triangleright b))\\
    =&\sigma^{-1}(S(\two{h}), \three{h})(S(\one{h})\triangleright b).
\end{align*}
\end{proof}

On the other hand, if $B\#_{\sigma}H$ is of associative type then $B\#_{\sigma^u}H$ need not be. We say that a cleft extension is of associative type if its underlying $(\la,\sigma)$ as in Section~\ref{seccocy} obeys Definition~\ref{def: assoc type}. Let $Z_{as}^2(H,B)$ be the set of such action-cocycle pairs for a fixed Hopf algebra $H$ and algebra $B$.

\begin{prop} \label{prop. Has}  Cleft extensions of associative type are classified by $\CH_{as}(H,B)$ defined as $Z_{as}^2(H,B)$ modulo transformation $(\la,\sigma)$ to $(\la^u,\sigma^u)$ where $u:H\to B$ is convolution invertible and obeys
 \begin{align*}
    u^{-1}(\one{h})(\two{h}\triangleright (u^{-1}(\one{g})(\two{g}\triangleright b)u(\three{g})))u(\three{h})=u^{-1}(\one{h}\one{g})((\two{h}\two{g})\triangleright b)u(\three{h}\three{g}),
\end{align*}
for any $h, g\in H$ and $b\in B$.
\end{prop}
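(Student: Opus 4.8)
The plan is to leverage the general classification of cleft extensions by $\CH^2(H,B)$ recalled around \eqref{equ. cleaving map to 2-cocycle}, and simply cut it down to the associative-type locus. Recall from that general picture that isomorphism classes of cleft extensions of $B$ by $H$ correspond to elements of $Z^2(H,B)$ (the cocycle pairs $(\la,\sigma)$) modulo the gauge equivalence $(\la,\sigma)\sim(\la^u,\sigma^u)$ of \eqref{equ. equivalent corssed product condition 1}--\eqref{equ. equivalent corssed product condition 2}, for $u:H\to B$ convolution-invertible and unital. By Definition~\ref{def: assoc type}, a cleft extension is of associative type precisely when its underlying pair lies in $Z_{as}^2(H,B)\subseteq Z^2(H,B)$, so the general surjection restricts to a surjection from $Z_{as}^2(H,B)$ onto the isomorphism classes of associative-type cleft extensions. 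The content of the proposition is then to identify, among all gauge transformations, exactly those that preserve the associative-type condition, i.e. that keep us inside $Z_{as}^2(H,B)$.

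First I would observe that for any convolution-invertible unital $u$ the transformed measuring $\la^u$ is again a measuring, this being part of the established statement that $(\la^u,\sigma^u)$ is valid cocycle data, and that $1\triangleright^u b=b$ holds automatically since $u$ is unital and $1\triangleright b=b$ by cocycle condition~(1). Hence, given $(\la,\sigma)\in Z_{as}^2(H,B)$, the pair $(\la^u,\sigma^u)$ is of associative type if and only if the one remaining module-algebra requirement holds, namely that $\la^u$ be an action. The key computation is to expand this action condition: applying \eqref{equ. equivalent corssed product condition 1} twice gives
\[ h\triangleright^u(g\triangleright^u b)=u^{-1}(\one{h})(\two{h}\triangleright(u^{-1}(\one{g})(\two{g}\triangleright b)u(\three{g})))u(\three{h}), \]
while, since $\Delta$ is an algebra map so that the two-fold coproduct of $hg$ is $\one{h}\one{g}\otimes\two{h}\two{g}\otimes\three{h}\three{g}$, the other side reads
\[ (hg)\triangleright^u b=u^{-1}(\one{h}\one{g})((\two{h}\two{g})\triangleright b)u(\three{h}\three{g}). \]
Equating these two expressions is exactly the displayed identity in the statement, so that identity is precisely the condition that $\la^u$ be an action, equivalently that $(\la^u,\sigma^u)\in Z_{as}^2(H,B)$.

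It remains to package this into the classification. Given two elements of $Z_{as}^2(H,B)$ representing isomorphic cleft extensions, the general theory supplies a convolution-invertible unital $u$ with $(\la',\sigma')=(\la^u,\sigma^u)$; since the target pair is of associative type, $\la^u=\la'$ is an action, so $u$ automatically satisfies the displayed identity. Conversely, any $u$ obeying the displayed identity sends $(\la,\sigma)\in Z_{as}^2(H,B)$ to another element of $Z_{as}^2(H,B)$ yielding an isomorphic extension. Therefore the restriction to $Z_{as}^2(H,B)$ of the general gauge equivalence coincides with the equivalence relation generated by the $u$ obeying the displayed identity, and the resulting quotient $\CH_{as}(H,B)$ classifies associative-type cleft extensions.

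I do not anticipate a serious obstacle: the substantive step is the Sweedler-index bookkeeping identifying the displayed identity with associativity of $\la^u$, which is routine. The one point needing care is the logical structure of the final paragraph, in particular the observation that a gauge transformation relating two associative-type cocycles must automatically obey the displayed identity, since it is this that guarantees the restricted equivalence relation is genuinely the restriction of the full one rather than something strictly coarser.
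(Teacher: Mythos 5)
Your proposal is correct and follows essentially the same route as the paper: the core step in both is the observation that the displayed identity is precisely $h\triangleright^u(g\triangleright^u b)=(hg)\triangleright^u b$, i.e.\ the condition that $\la^u$ be an action, obtained by expanding (\ref{equ. equivalent corssed product condition 1}). The only (harmless) difference is in how you verify that the restricted relation is an equivalence relation: the paper checks explicitly that the admissible $u$ are closed under convolution product and inverse, whereas you note that any gauge transformation between two associative-type pairs automatically obeys the condition, so the relation is just the restriction of the full gauge equivalence to $Z^2_{as}(H,B)$ — a slightly cleaner way to reach the same conclusion.
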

\begin{proof} Any two equivalent cleft extensions are related by some convolution-invertible $u$ as recalled in  Section~\ref{seccocy}. Not all of them will preserve associative type, the condition for this being $h\triangleright^u(g\triangleright^u b)=(hg)\triangleright^u b$ which is equivalent to
 the condition stated  using the relation (\ref{equ. equivalent corssed product condition 1}). It follows, but can be checked explicitly, that if $(\la,\sigma)\sim(\la^u,\sigma^u)\sim ((\la^u)^v,(\sigma^u)^v)$ so that $v$ obeys the stated condition with respect to $\la^u$, then the convolution product $u*v$ obeys the stated conditions and $((\la^u)^v,(\sigma^u)^v)=(\la^{u*v},\sigma^{u*v})$. Similarly for $u^{-1}$ to go back from $(\la^u,\sigma^u)$.
\end{proof}

\begin{exa}\label{ex inner} Let $H$ be a Hopf algebra, $B$ an $H$-module algebra of an inner form $h\la b= u(h\o)b u^{-1}(h\t)$ where  $u:H\to B$ is convolution invertible and such that
\[ u^{-1}(g\o)u^{-1}(h\o)u(h\t g\t)\in Z(B)\]
 for all $h,g\in H$ (this is necessary and sufficient to have an action). Then $(\la,1\eps\tens\eps)$ (i.e. with trivial cocycle) is of associative type and cohomologous in $\CH_{as}(H,B)$ to $(\id\eps,\sigma^u)$ (i.e. with trivial action) where $\sigma^u(h,g)=u^{-1}(g\o)u^{-1}(h\o)u(h\t g\t)$. Conversely, if $B$ is an $H$-module algebra $(\la, \sigma=1\eps\tens\eps)$ and is cohomologous to a cocycle of the form $(\id\eps,\sigma)$ (i.e. for trivial action) then $\la$ is of this inner form. For the first part, the condition in Proposition~\ref{prop. Has} reduces to the assumed property of $u$ and for this combination to lie in the centre $Z(B)$ is exactly the condition (\ref{eqn. assoc type}) for $\sigma^u$ to be of associative type when the action is trivial. In the other direction, for $\la^u$ to end up trivial, $\la$ has to be of the inner form in view of (\ref{equ. equivalent corssed product condition 1}). For a very concrete example, one could let $H=\mathbb{C}\Bbb Z_2$ and $B$ an algebra with an automorphism $\alpha$. If $\alpha^2=\id$ then we make $B$ into an $H$-module algebra
 with the generator of $\Bbb Z_2$ acting by $\alpha$. This action is of the inner form if and only if $\alpha$ is an inner automorphism. \end{exa}

Note that one could further restrict our gauge transformations to obey
\begin{equation}\label{eqn: strong equiv}  u(h\o) (h\t\la b)= (h\o\la b) u(h\t)  \end{equation}
which is stronger than the condition in Proposition~\ref{prop. Has} and enforces that $\la^u=\la$. This would define a cohomology $\CH^2_\la(H,B)$ for cocycle cross products for a fixed $H$-module algebra $B$. This condition is, however, quite restrictive. Also note that our associative type construction should not be confused with lazy Sweedler cohomology in the terminology of \cite{BC} where $\sigma$ is $k$-valued.

 Similarly, not every cotwist preserves associative type:

\begin{prop}\label{prop. cocy as twist} A Drinfeld co-twist $\chi$ as in Proposition~\ref{prop. twist cleft Galois object} preserves associative type iff  $(\la,\chi)$ viewed as a cocycle with values in $k.1\in B$ is  of associative type.
\end{prop}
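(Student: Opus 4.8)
The plan is to reduce each side of the asserted equivalence to an explicit identity on $H\otimes H\otimes B$ and then to recognise the two identities as a single relation in the convolution algebra $\Hom_k(H\otimes H,\mathrm{End}(B))$. First I would unwind the left-hand side. By Proposition~\ref{prop. twist cleft Galois object} the cotwisted data on $B\#_{\sigma_\chi}H^\chi$ is $\la_\chi=\la$ with $\sigma_\chi=\sigma*\chi^{-1}$. Since $B\#_\sigma H$ is of associative type, $\la$ is already an action of $H$, so $h\la(g\la b)=(hg)\la b$. By Definition~\ref{def: assoc type} the twisted product is of associative type exactly when the same map $\la$ is also an action of the algebra $H^\chi$, i.e.\ when $(h\bullet_\chi g)\la b=h\la(g\la b)=(hg)\la b$. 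Expanding $h\bullet_\chi g=\chi(\one h,\one g)\,\two h\two g\,\chi^{-1}(\three h,\three g)$ and using that $\chi$ is scalar-valued, this is
\begin{equation*}
\chi(\one h,\one g)\,\chi^{-1}(\three h,\three g)\,\big((\two h\two g)\la b\big)=(hg)\la b.\tag{$\ast$}
\end{equation*}

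Next I would unwind the right-hand side. Reading the associative-type criterion (\ref{eqn. assoc type}) with $\sigma$ replaced by the $k.1$-valued cocycle $\chi$, and again using that its values are central scalars, the statement ``$(\la,\chi)$ of associative type'' means precisely
\begin{equation*}
\chi(\one h,\one g)\,\big((\two h\two g)\la b\big)=\chi(\two h,\two g)\,\big((\one h\one g)\la b\big).\tag{$\ast\ast$}
\end{equation*}
Thus the proposition is exactly the equivalence $(\ast)\Leftrightarrow(\ast\ast)$. To see this cleanly, I would work in $\Hom_k(H\otimes H,\mathrm{End}(B))$, which is a convolution algebra because $H\otimes H$ is a coalgebra and $\mathrm{End}(B)$ an algebra. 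Put $\Phi(h\otimes g)=\big(b\mapsto (hg)\la b\big)$, which is well defined since $\la$ is an action, and $\hat\chi(h\otimes g)=\chi(h,g)\,\id_B$. Using $\chi*\chi^{-1}=\eps\otimes\eps$ one checks that $\hat\chi$ is convolution-invertible with inverse $\widehat{\chi^{-1}}(h\otimes g)=\chi^{-1}(h,g)\,\id_B$. A direct reading of the coproduct then identifies $(\ast\ast)$ with $\hat\chi*\Phi=\Phi*\hat\chi$ and $(\ast)$ with $\hat\chi*\Phi*\widehat{\chi^{-1}}=\Phi$. Convolving the latter on the right by $\hat\chi$ and cancelling $\widehat{\chi^{-1}}*\hat\chi$ turns it into the former, and conversely; hence $(\ast)$ and $(\ast\ast)$ are the same relation and the proposition follows.

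If one prefers to avoid the convolution formalism, the same equivalence can be verified by hand: for $(\ast\ast)\Rightarrow(\ast)$ one applies $(\ast\ast)$ to the first two legs of the threefold coproduct (carrying $\chi^{-1}$ along as a passenger) and then contracts the remaining $\chi,\chi^{-1}$ via $\chi*\chi^{-1}=\eps\otimes\eps$, while the reverse implication follows by convolving $(\ast)$ with $\chi$ on the leg bearing $\chi^{-1}$. The only delicate point, and the natural place to make an error, is keeping the Sweedler legs correctly aligned when $\chi$ and $\chi^{-1}$ sit on non-adjacent factors of the threefold coproduct — which is precisely the bookkeeping that the convolution-algebra picture renders automatic.
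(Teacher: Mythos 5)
Your proof is correct and follows essentially the same route as the paper: both reduce the preservation of associative type to the single identity $\chi(\one{h},\one{g})\,((\two{h}\two{g})\la b)\,\chi^{-1}(\three{h},\three{g})=(hg)\la b$. The paper then simply ``interprets'' this as the associative-type condition for $(\la,\chi)$, whereas you explicitly verify its equivalence with the literal condition $\chi(\one{h},\one{g})((\two{h}\two{g})\la b)=\chi(\two{h},\two{g})((\one{h}\one{g})\la b)$ by conjugation in the convolution algebra --- a small step the paper leaves to the reader, which you carry out correctly.
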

\begin{proof} Indeed,
\begin{align*}
    (h\cdot_{\chi}g)\triangleright_{\chi} b=\chi(\one{h}, \one{g})((\two{h}\two{g})\triangleright b)\chi^{-1}(\three{h}, \three{g})\neq h\triangleright_{\chi}(g\triangleright_{\chi} b)=h\la(g\la b)=(hg)\la b
\end{align*}
would need $(h\cdot_\chi g)\la= (hg)\la$ i.e. act the same on $B$, and we see that this happens if and only if
\[ \chi(\one{h}, \one{g})((\two{h}\two{g})\triangleright b)\chi^{-1}(\three{h}, \three{g})  =(hg)\la b \]
which we interpret as stated. \end{proof}

\section{Antipode and twisting of cocycle Hopf algebroids} \label{sec3}

We start with preliminaries on bialgebroids before proceeding to the our new results for certain `cocycle Hopf algebroids'  $B^e\#_\sigma H$. These are a special case of  Ehresmann-Schauenburg in Section~4 but of interest in their own right.

\subsection{Preliminaries on bialgebroids and Hopf algebroids}

Here we recall the basic definitions (cf. \cite{BW}, \cite{Boehm}). Let $B$ be a unital algebra over a field $k$.
A {\em $B$-ring}  just means a unital algebra in the monoidal category ${}_B\CM_B$ of $B$-bimodules. Likewise,  a  {\em $B$-coring} is a coalgebra in ${}_B\CM_B$. Morphisms of $B$-(co)rings map the (co)algebra structures but now in the category ${}_B\CM_B$.

In practice, specifying a unital $B$-ring $\cL$ is equivalent to specifying a unital algebra $\cL$ (over $k$) and an algebra map $\eta:B\to \cL$. Left and right multiplication in $\cL$ pull back to left and right $B$-actions as a bimodule (so $b.X.c=\eta(b)X\eta(c)$ for all $b,c\in B$ and $X\in \cL$) and the product descends to the product $\mu_B:\cL\tens_B\cL\to \cL$ with $\eta$ the unit map. Conversely, given
$\mu_B$ we can pull back to an associative product on $\cL$ with unit $\eta(1)$.

Now suppose that $s:B\to \cL$ and $t:B^{op}\to \cL$ are algebra maps with images that commute. Then $\eta(b\tens c)=s(b)t(c)$ is an algebra map $\eta: B^e\to \cL$, where $B^e=B\tens B^{op}$, and is equivalent to making $\cL$ a $B^e$-ring. The left $B^e$-action part of this is equivalent to a $B$-bimodule structure
\begin{equation}\label{eq:rbgd.bimod}
b.X.c= s(b) t(c)X
\end{equation}
for all $b,c\in B$ and $X\in \cL$.

\begin{defi}\label{def:right.bgd} Let $B$ be a unital algebra. A left $B$-bialgebroid is an algebra $\cL$, commuting (`source' and `target') algebra maps $s:B\to \cL$ and $t:B^{op}\to \cL$ (and hence a $B^e$-ring) and a $B$-coring for the bimodule structure (\ref{eq:rbgd.bimod}) which is compatible in the sense:
\begin{itemize}
\item[(i)] The coproduct $\Delta$ corestricts to an algebra map  $\cL\to \cL\times_B \cL$ where
\begin{equation*} \cL\times_{B} \cL :=\{\ \sum_i X_i \ot_{B} Y_i\ |\ \sum_i X_it(b) \ot_{B} Y_i=
\sum_i X_i \ot_{B}  Y_i  s(b),\quad \forall b\in B\ \}\subseteq \cL\tens_B\cL,
\end{equation*}
 is an algebra via factorwise multiplication.
\item[(ii)] The counit $\epsilon$ is a `left character' in the sense
\begin{equation*}\epsilon(1_{\cL})=1_{B},\quad \epsilon( s(b)X)=b\epsilon(X), \quad \epsilon(Xs(\epsilon(Y)))=\epsilon(XY)=\epsilon(Xt (\epsilon(Y)))\end{equation*}
for all $X,Y\in \cL$ and $b\in B$.
\end{itemize}
\end{defi}

Morphisms between left $B$-bialgebroids are $B$-coring maps which are also algebra maps.

\begin{defi}\label{defHopf}
A left bialgebroid $\cL$ is a left Hopf algebroid (\cite{schau1}, Thm and Def 3.5.) if
\[\lambda: \cL\ot_{B^{op}}\cL\to \cL\ot_{B}\cL,\quad
    \lambda(X\ot_{B^{op}} Y)=\one{X}\ot_{B}\two{X}Y\]
is invertible, where $\tens_{B^{op}}$ is induced by $t$ (so $Xt(b)\ot_{B^{op}}Y=X\ot_{B^{op}}t(b)Y$, for all $X, Y\in \cL$ and $b\in B$) while the $\tens_B$ is the standard one (\ref{eq:rbgd.bimod}) (so $t(b)X\ot_{B}Y=X\ot_{B}s(b)Y$).
\end{defi}
In the following, by Hopf algebroid we will always mean a left Hopf algebroid. If $B=k$ then this reduces to the map $\cL\tens\cL\to \cL\tens\cL$ given by $h\tens g\mapsto h\o\tens h\t g$ which for a usual Hopf algebra has inverse $h\tens g\mapsto h\o\tens S(h\t)g$ if there is an antipode.

\begin{lem}\label{lemS}
Let $\cL$ be a left bialgebroid. If there is an invertible linear map $S:\cL\to\cL$, such that
\begin{itemize}
    \item [(1)] $S(t(b)X)=S(X)s(b)$,  \qquad $S^{-1}(s(b)X)=S^{-1}(X)t(b)$
    \item[(2)] $\one{S^{-1}(\two{X})}\ot_{B}\two{S^{-1}(\two{X})}\one{X}=S^{-1}(X)\ot_{B}1$,
    \item[(3)] $\one{S(\one{X})}\two{X}\ot_{B}\two{S(\one{X})}=1\ot_{B}S(X)$,
\end{itemize}
for all $X, Y\in \cL$ and $b\in B$ then $\cL$ is a left Hopf algebroid with $\lambda^{-1}$ given by
\begin{align*}
    \lambda^{-1}(X\ot_{B}Y):=S^{-1}(\two{S(X)})\ot_{B^{op}}\one{S(X)}Y.
\end{align*}
We call such $S$ a  {\em (left) antipode} for $\CL$.
\end{lem}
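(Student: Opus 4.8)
The goal is to verify that the stated formula $\lambda^{-1}(X\ot_B Y):=S^{-1}(\two{S(X)})\ot_{B^{op}}\one{S(X)}Y$ is a genuine two-sided inverse to the map $\lambda(X\ot_{B^{op}}Y)=\one{X}\ot_B\two{X}Y$. So the plan breaks into three preliminary checks and then two composition computations.

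First I would verify that the proposed $\lambda^{-1}$ is \emph{well defined}, i.e.\ that it descends to the balanced tensor products. Concretely, $\lambda^{-1}$ is defined on $\CL\ot_B\CL$, so I must check that the expression respects the relation $t(b)X\ot_B Y=X\ot_B s(b)Y$ coming from the $\ot_B$ on the source side, and that its output respects the $\ot_{B^{op}}$ relation on the target side. Using axiom (1), $S^{-1}(s(b)X)=S^{-1}(X)t(b)$, one moves a factor of $s(b)$ through $S^{-1}$ and the coproduct (which is a $B$-bimodule map by the bialgebroid axioms), turning it into a $t(b)$ that can be absorbed across $\ot_{B^{op}}$; this is the routine but necessary compatibility bookkeeping. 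I would likewise record that $S$ itself sends the two module structures into each other via (1), so that all the Sweedler indices live in the correct balanced tensor product at each stage.

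Next comes the core computation: composing $\lambda$ with $\lambda^{-1}$ in both orders and reducing to the identity using axioms (2) and (3). For $\lambda\circ\lambda^{-1}$ I would apply $\lambda$ to $S^{-1}(\two{S(X)})\ot_{B^{op}}\one{S(X)}Y$, obtaining $\one{S^{-1}(\two{S(X)})}\ot_B\two{S^{-1}(\two{S(X)})}\one{S(X)}Y$; setting $W=S(X)$ this is exactly the left-hand side of axiom (2), which collapses to $S^{-1}(W)\ot_B Y=X\ot_B Y$. For the reverse composite $\lambda^{-1}\circ\lambda$ I would apply $\lambda^{-1}$ to $\one{X}\ot_B\two{X}Y$, obtaining $S^{-1}(\two{S(\one{X})})\ot_{B^{op}}\one{S(\one{X})}\two{X}Y$, and then use axiom (3) in the form $\one{S(\one{X})}\two{X}\ot_B\two{S(\one{X})}=1\ot_B S(X)$ to rewrite the product $\one{S(\one{X})}\two{X}$ as $1$ while converting the remaining $S$-factor; applying $S^{-1}$ and collapsing gives $X\ot_{B^{op}}Y$.

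The main obstacle I anticipate is not the algebra of axioms (2) and (3) themselves but the careful handling of \emph{where the Sweedler legs are balanced} and the direction in which axioms (2) and (3) must be applied: axiom (3) is stated as an identity in $\CL\ot_B\CL$ after the product $\two{X}$ has already been multiplied in, so I must first reorganise $\lambda^{-1}(\lambda(X\ot Y))$ so that the subexpression $\one{S(\one{X})}\two{X}\ot_B\two{S(\one{X})}$ appears literally before I can substitute. This requires using coassociativity of $\Delta$, the fact that $\Delta$ and $S$ interact with $s,t$ as in (1), and moving $Y$ out of the way across the $B^{op}$-balancing. Once these identifications are in place the substitutions are immediate, but getting the tensor legs to line up so that (2) and (3) apply verbatim is the delicate part. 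I would present the two composite calculations as short aligned displays, invoking (1) for the well-definedness and module compatibilities and (2), (3) for the two collapses.
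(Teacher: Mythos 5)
Your plan matches the paper's proof essentially step for step: well-definedness of $\lambda^{-1}$ (and of the expressions in (2), (3)) via axiom (1), then $\lambda\circ\lambda^{-1}=\id$ by substituting $W=S(X)$ into axiom (2), and $\lambda^{-1}\circ\lambda=\id$ by reorganising so that axiom (3) applies and then passing through $S^{-1}$. The delicate point you flag — that (3) must be applied via a well-defined map $u\ot_B v\mapsto S^{-1}(v)\ot_{B^{op}}uY$ using (1) to move the balancing — is exactly what the paper's computation does implicitly, so the proposal is correct and takes the same route.
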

\begin{proof}
First we check conditions (2), (3) and that the stated $\lambda^{-1}$ is well defined.
For (3),
\begin{align*}
    \one{S(t(b)\one{X})}\two{X}\ot_{B}\two{S(t(b)\one{X})}=\one{S(\one{X})}s(b)\two{X}\ot_{B}\two{S(\one{X})}.
\end{align*}
Similar for (2). For $\lambda^{-1}$,
\begin{align*}
    S^{-1}(s(b)\two{S(X)})\ot_{B^{op}}\one{S(X)}Y=&S^{-1}(\two{S(X)})t(b)\ot_{B^{op}}\one{S(X)}Y\\
    =&S^{-1}(\two{S(X)})\ot_{B^{op}}t(b)\one{S(X)}Y.
\end{align*}
It remains to check that $\lambda^{-1}$ is the inverse of $\lambda$:
\begin{align*}
\lambda(\lambda^{-1}(X\ot_{B}Y))=&\lambda(S^{-1}(\two{S(X)})\ot_{B^{op}}\one{S(X)}Y)\\
=&\one{S^{-1}(\two{S(X)})}\ot_{B}\two{S^{-1}(\two{S(X)})}\one{S(X)}Y\\
=&X\ot_{B}Y.\\
    \lambda^{-1}(\lambda(X\ot_{B^{op}}Y))=&\lambda^{-1}(\one{X}\ot_{B}\two{X}Y)\\
    =&S^{-1}(\two{S(\one{X})})\ot_{B^{op}}\one{S(\one{X})}\two{X}Y\\
    =&X\ot_{B^{op}}Y
\end{align*}
for all $X,Y\in \CL$.
\end{proof}

Note that we are not claiming that $S$ here is unique. In what follows, we adopt the shorthand
\begin{equation}\label{X+-} X_{+}\ot_{B^{op}}X_{-}:=\lambda^{-1}(X\ot_{B}1).\end{equation}

\begin{rem} (1) If $B=k$ then Definition~\ref{defHopf}  reduces to the usual definition of a Hopf algebra.  We first observe that
\begin{itemize}
    \item [(i)] $\one{h}{}_{+}\ot\one{h}{}_{-}\two{h}=h\ot 1$.
    \item[(ii)] $\one{h_{+}}\ot \two{h_{+}}h_{-}=h\ot 1$
    \item[(iii)] $\one{h_{+}}\ot \two{h_{+}}\ot h_{-}=\one{h}\ot \two{h}{}_{+}\ot \two{h}{}_{-}$
    \item[(iv)]$h_{+}h_{-}=\eps(h)$
\end{itemize}
where  (i) and (ii) are from applying $\lambda^{-1}\circ \lambda$ and $\lambda\circ \lambda^{-1}$ on $h\ot 1$. Then (iv) is the result by applying $\eps\ot \id$ to (ii). For (iii), apply $\id\ot \lambda$ on the right hand side of (iii) to get $\one{h}\ot\two{h}\ot 1$. But applying $\id\ot \lambda$ on the left hand side of (iii) we get
\begin{align*}
    \one{h_{+}}\ot \two{h_{+}}\ot \three{h_{+}}h_{-}=\one{\one{h_{+}}}\ot \two{\one{h_{+}}}\ot \two{h_{+}}h_{-}=\one{h}\ot\two{h}\ot 1
\end{align*}
also, where the second steps uses (ii). Since $\id\ot\lambda$ is a bijective map, we have (iii).
Now set $S=\eps(h_{+})h_{-}$ and we check
\begin{align*}
    S(\one{h})\two{h}=\eps(\one{h}{}_{+})\one{h}{}_{-}\two{h}=\eps(h),
\end{align*}
where we use (i) in the second step. On the other hand,
\begin{align*}
    \one{h}S(\two{h})=\one{h}\eps(\two{h}{}_{+})\two{h}{}_{-}=\one{h_{+}}\eps(\two{h_{+}})h_{-}=h_{+}h_{-}=\eps(h),
\end{align*}
where the second step uses (iii) and the last step uses (iv).

(2) It follows that if $B=k$ then the conditions in Lemma \ref{lemS} is equivalent to a usual Hopf algebra with invertible antipode. This is because of these conditions hold then $S$ is assumed invertible and moreover, Definition~\ref{defHopf} applies so we have a Hopf algebra by part (1). Conversely, if we have a Hopf algebra with invertible antipode,  (2) and (3) in Lemma \ref{lemS} hold using that $S,S^{-1}$ are anticoalgebra maps.
\end{rem}

We will need some identities, and we recall from \cite[Prop.~3.7]{schau1} that for a left Hopf algebroid,
\begin{align}
    \one{X_{+}}\ot_{B}\two{X_{+}}X_{-}&=X\ot_{B}1\label{equ. inverse lamda 1};\\
    \one{X}{}_{+}\ot_{B^{op}}\one{X}{}_{-}\two{X}&=X\ot_{B^{op}}1\label{equ. inverse lamda 2};\\
    (XY)_{+}\ot_{B^{op}}(XY)_{-}&=X_{+}Y_{+}\ot_{B^{op}}Y_{-}X_{-}\label{equ. inverse lamda 3};\\
    1_{+}\ot_{B^{op}}1_{-}&=1\ot_{B^{op}}1\label{equ. inverse lamda 4};\\
    \one{X_{+}}\ot_{B}\two{X_{+}}\ot_{B^{op}}X_{-}&=\one{X}\ot_{B}\two{X}{}_{+}\ot_{B^{op}}\two{X}{}_{-}\label{equ. inverse lamda 5};\\
    X_{+}\ot_{B^{op}}\one{X_{-}}\ot_{B}\two{X_{-}}&=X_{++}\ot_{B^{op}}X_{-}\ot_{B}X_{+-}\label{equ. inverse lamda 6};\\
    X&=X_{+}t(\eps(X_{-}))\label{equ. inverse lamda 7};\\
    X_{+}X_{-}&=s(\eps(X))\label{equ. inverse lamda 8}.
\end{align}

Since $\lambda(s(b)\ot_{B^{op}}1)=s(b)\ot_{B} 1$, we have
\begin{align*}
    &s(b)_{+}\ot_{B^{op}} s(b)_{-}=s(b)\ot_{B^{op}} 1\\
    %&t(b)_{+}\ot_{B^{op}} t(b)_{-}=1\ot_{B^{op}} t(b)=t(b)\ot_{B^{op}} 1
\end{align*}
As a result, from (\ref{equ. inverse lamda 3}), we get
\begin{align}
    &(Xs(b))_{+}\ot_{B^{op}} (Xs(b))_{-}=X_{+}s(b)\ot_{B^{op}} X_{-},\label{equ. source and target map with lambda inv 1}\\
    %&(Xt(b))_{+}\ot_{B^{op}} (Xt(b))_{-}=X_{+}\ot_{B^{op}} t(b)X_{-}=X_{+}t(b)\ot_{B^{op}} X_{-}\label{equ. source and target map with lambda inv 2}\\
    &(s(b)X)_{+}\ot_{B^{op}} (s(b)X)_{-}=s(b)X_{+}\ot_{B^{op}} X_{-}.\label{equ. source and target map with lambda inv 3}
    %&(t(b)X)_{+}\ot_{B^{op}} (t(b)X)_{-}=X_{+}\ot_{B^{op}} X_{-}t(b).\label{equ. source and target map with lambda inv 4}
\end{align}
Since $\lambda^{-1}(t(b)X\ot_{B}1)=\lambda^{-1}(X\ot_{B}s(b))=X_{+}\ot_{B^{op}}X_{-}s(b)$, we have
\begin{align}
   (t(b)X)_{+}\ot_{B^{op}} (t(b)X)_{-}=X_{+}\ot_{B^{op}}X_{-}s(b)\label{equ. source and target map with lambda inv 5}.
\end{align}

Moreover, since $\lambda(X_{+}\ot_{B^{op}}s(b)X_{-})=\one{X_{+}}\ot_{B}\two{X_{+}}s(b)X_{-}=\one{X_{+}}t(b)\ot_{B}\two{X_{+}}X_{-}=Xt(b)\ot_{B}1$, we have

\begin{align}
    (Xt(b))_{+}\ot_{B^{op}}(Xt(b))_{-}=X_{+}\ot_{B^{op}}s(b)X_{-}\label{equ. source and target map with lambda inv 6}.
\end{align}

\subsection{Twisted Hopf algebroids}

Given a left $B$-bialgebroid $\cL$,   there is an algebra structure on ${}_{B}\Hom_{B}(\cL\otimes_{B^e}\cL, B)$ with the (convolution) product and unit given by
\begin{align}
    f\star g(X, Y):=f(\one{X}, \one{Y})g(\two{X}, \two{Y}),\quad \tilde{\epsilon}(X\tens Y)= \epsilon(XY)
\end{align}
for all $X, Y\in \cL$ and $f, g\in {}_{B}\Hom_{B}(\cL\otimes_{B^e}\cL, B)$. The $B$-bimodule structure on $\cL\otimes_{B^e}\cL$ is just the left $B^e$ action (so $b.(X\otimes Y). c=s(b)t(c)X\otimes Y$  for all $b, c\in B$) as induced by the algebra map $\eta: B^e\to \cL$.

\begin{defi} \cite{Boehm}
Let $\cL$ be a left $B$-bialgebroid. A \textup{normalised left 2-cocycle} on
$\cL$ is a convolution invertible element $\varsigma\in {}_{B}\Hom_{B}(\cL\otimes_{B^e}\cL, B)$ such that
\[ \varsigma(X, s(\varsigma(\one{Y}, \one{Z}))\two{Y}\two{Z})=\varsigma(s(\varsigma(\one{X}, \one{Y}))\two{X}\two{Y}, Z),\quad \varsigma(1_{\cL}, X)=\epsilon(X)=\varsigma(X, 1_{\cL})\]
for all $X, Y, Z\in \cL$.

Similarly, a \textup{normalised right 2-cocycle} on
$\cL$ is a convolution invertible element $\xi\in {}_{B}\Hom_{B}(\cL\otimes_{B^e}\cL, B)$, such that
\[ \xi(X, t(\xi(\two{Y}, \two{Z}))\one{Y}\one{Z})=\xi(t(\xi(\two{X}, \two{Y}))\one{X}\one{Y}, Z),\quad \xi(1_{\cL}, X)=\epsilon(X)=\xi(X, 1_{\cL})\]
for all $X, Y, Z\in \cL$.
A normalised left 2-cocycle is called an \textup{invertible normalised 2-cocycle}, if its convolution inverse is a normalised right 2-cocycle.
\end{defi}

If $B=k$ the field then a bialgebroid reduces to a usual bialgebra and $\varsigma$ reduces to a usual Drinfeld cotwist cocycle in the sense of \cite{Ma:book}. However, unlike a 2-cocycle on a bialgebra, for any normalised left 2-cocycle on a bialgebroid, its convolution inverse is not automatically a normalised right 2-cocycle.

\begin{lem}\label{lemma. 2 cocycle and its inverse}
Let $\cL$ be a left $B$-bialgebroid and $\varsigma\in {}_{B}\Hom_{B}(\cL\otimes_{B^e}\cL, B)$  an invertible normalised  2-cocycle with inverse $\varsigma^{-1}$. Then
\begin{align}
    \varsigma(\one{X}, \one{Y}\one{Z})\varsigma^{-1}(\two{X}\two{Y}, \two{Z})&=\varsigma(Xs(\varsigma^{-1}(\one{Y}, Z)), \two{Y}),\label{equ. 2 cocycle and its inverse1}\\
    \varsigma(\one{X}\one{Y}, \one{Z})\varsigma^{-1}(\two{X}, \two{Y} \two{Z})&=\varsigma^{-1}(X, t(\varsigma(\two{Y}, Z))\one{Y}).\label{equ. 2 cocycle and its inverse2}
\end{align}
\end{lem}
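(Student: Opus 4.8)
The plan is to prove each of the two identities by reducing it, with the help of the convolution-inverse relation $\varsigma\star\varsigma^{-1}=\tilde\epsilon=\varsigma^{-1}\star\varsigma$ (recall $\tilde\epsilon(X\otimes Y)=\epsilon(XY)$), to a single canonical expression. This is the bialgebroid version of the observation that the first parts of Lemma~\ref{lemma. unital twist} follow by ``applying the convolution inverse to the 2-cocycle conditions.'' Throughout I use that $\varsigma,\varsigma^{-1}\in{}_B\Hom_B(\cL\otimes_{B^e}\cL,B)$ are $B$-bimodule maps for the structure $b.(X\otimes Y).c=s(b)t(c)X\otimes Y$, together with the left 2-cocycle condition for $\varsigma$ and the right 2-cocycle condition for its inverse.

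For \eqref{equ. 2 cocycle and its inverse1} I would introduce the auxiliary ``right convolution by $\varsigma$'' operation, sending a $B$-valued expression $F$ in three arguments to
\[ \Phi(F)(X,Y,Z):=F(\one X,\one Y,\one Z)\,\varsigma(\two X\two Y,\two Z), \]
and observe that $\Phi$ is injective on the expressions at hand: convolving back by $\varsigma^{-1}$ and applying $\varsigma\star\varsigma^{-1}=\tilde\epsilon$ on the merged leg $XY$ against $Z$ recovers $F$. It then suffices to check that $\Phi$ sends both sides of \eqref{equ. 2 cocycle and its inverse1} to the same element. On the left-hand side the two factors on the $(XY,Z)$ legs collapse at once by $\varsigma^{-1}\star\varsigma=\tilde\epsilon$, leaving $\varsigma$ evaluated on $X$ against the product $YZ$ (suitably decorated by $s$). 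On the right-hand side $\varsigma(Xs(\varsigma^{-1}(\one Y,Z)),\two Y)$ I would first regroup the two $\varsigma$-factors produced by $\Phi$ using the left 2-cocycle condition for $\varsigma$, and then cancel the resulting $\varsigma^{-1}(\one Y,-)\varsigma(\two Y,-)$ pair by $\varsigma^{-1}\star\varsigma=\tilde\epsilon$; this yields the same canonical element. Injectivity of $\Phi$ then gives the identity.

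For \eqref{equ. 2 cocycle and its inverse2} I would run the mirror argument with the ``right convolution by $\varsigma^{-1}$'' operation $G\mapsto G(\one X,\one Y,\one Z)\,\varsigma^{-1}(\two X\two Y,\two Z)$, whose injectivity comes from $\varsigma\star\varsigma^{-1}=\tilde\epsilon$: the left-hand side collapses by $\varsigma\star\varsigma^{-1}$, while the right-hand side $\varsigma^{-1}(X,t(\varsigma(\two Y,Z))\one Y)$ is handled by the right 2-cocycle condition for $\varsigma^{-1}$ followed by $\varsigma\star\varsigma^{-1}$. The two identities are genuinely dual: \eqref{equ. 2 cocycle and its inverse1} uses only that $\varsigma$ is a left 2-cocycle and \eqref{equ. 2 cocycle and its inverse2} only that $\varsigma^{-1}$ is a right 2-cocycle, which is exactly the content of $\varsigma$ being an invertible normalised 2-cocycle.

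The main obstacle I anticipate is not the combinatorial skeleton above, which is essentially the classical Hopf-algebra 2-cocycle manipulation, but the $B$-linearity bookkeeping: at each step one must check that the intermediate $B$-valued expressions are well defined on $\cL\otimes_{B^e}\cL$ and that the $B$-valued cocycle factors enter the correct tensor leg through $s$ or $t$, producing precisely the $s(\varsigma^{-1}(\one Y,Z))$ in the first leg for \eqref{equ. 2 cocycle and its inverse1} and the $t(\varsigma(\two Y,Z))$ in the second leg for \eqref{equ. 2 cocycle and its inverse2}. Since $\varsigma$ and $\varsigma^{-1}$ are only bimodule maps and are \emph{not} separately $B$-linear in each slot, keeping the balanced-tensor relations straight while transporting these source and target insertions around is where the real care is required.
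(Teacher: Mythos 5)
Your strategy for the first identity \eqref{equ. 2 cocycle and its inverse1} is sound and is, up to reorganisation, the paper's own argument: the paper forms the three-factor expression $\varsigma(\one{X}, \one{Y}\one{Z})\,\varsigma^{-1}(\two{X}, t(\varsigma^{-1}(\three{Y}, \three{Z}))\two{Y}\two{Z})\,\varsigma(\three{X}, \four{Y})$ and evaluates it in two ways using the \emph{right} 2-cocycle condition for $\varsigma^{-1}$, whereas you convolve both sides by $\varsigma$ on the merged $(XY,Z)$ legs and invoke the \emph{left} 2-cocycle condition for $\varsigma$; these are the same cancellation pattern read in opposite directions, and your variant has the mild advantage of needing only convolution-invertibility plus the left-cocycle hypothesis. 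Be aware, though, that what you defer as ``bookkeeping'' is essentially the entire content of the paper's proof: after the single cancellation $\varsigma^{-1}\star\varsigma=\tilde{\epsilon}$, the paper needs some eight further lines just to push $\epsilon(\one{X}\one{Y}\one{Z})$ through the remaining factor using the left-character property of $\epsilon$, the Takeuchi condition and the $B^e$-balance before $\varsigma(Xs(\varsigma^{-1}(\one{Y},Z)),\two{Y})$ emerges. A plan that stops short of this is a skeleton rather than a proof.

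The genuine problem is in your treatment of \eqref{equ. 2 cocycle and its inverse2}. The operator you name, $G\mapsto G(\one{X},\one{Y},\one{Z})\,\varsigma^{-1}(\two{X}\two{Y},\two{Z})$, applied to the left-hand side of \eqref{equ. 2 cocycle and its inverse2} produces $\varsigma(\one{X}\one{Y},\one{Z})\,\varsigma^{-1}(\two{X},\two{Y}\two{Z})\,\varsigma^{-1}(\three{X}\three{Y},\three{Z})$, and this does \emph{not} collapse: the only pair of the shape $\varsigma\star\varsigma^{-1}$ consists of the first and third factors, which sit on coproduct legs $1$ and $3$ (leg $2$ being consumed by the middle factor) and are separated by a generally non-central element of $B$, so $\varsigma\star\varsigma^{-1}=\tilde{\epsilon}$ cannot be applied. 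The factor at the right end of that left-hand side is $\varsigma^{-1}(\two{X},\two{Y}\two{Z})$, which lives on the $(X,YZ)$ legs, so the operator that does collapse it is right-convolution by $\varsigma$ on $(X,YZ)$, giving $\varsigma(XY,Z)$ via $\varsigma^{-1}\star\varsigma=\tilde{\epsilon}$ (or, dually, a left-convolution killing the initial factor). The point is that \eqref{equ. 2 cocycle and its inverse2} is not the literal mirror image of \eqref{equ. 2 cocycle and its inverse1}: the $s$ versus $t$ insertions, the asymmetric bimodule structure $b.(X\otimes Y).c=s(b)t(c)X\otimes Y$ and the noncommutativity of $B$ all break the symmetry, so ``run the mirror argument'' must be replaced by an actual computation with the corrected operator, in particular checking that the right-hand side $\varsigma^{-1}(X,t(\varsigma(\two{Y},Z))\one{Y})$ maps to the same canonical element; transporting the $t(\cdot)$ insertion in the first tensor leg through the $B^e$-balance and the Takeuchi condition is exactly where this is nontrivial, and it does not follow from anything you have written.
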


\begin{proof}
Since the inverse of $\varsigma$ satisfies
\[ \varsigma^{-1}(X, t(\varsigma^{-1}(\two{Y}, \two{Z}))\one{Y}\one{Z})=\varsigma^{-1}(t(\varsigma^{-1}(\two{X}, \two{Y}))\one{X}\one{Y}, Z),\]
we can write
\begin{align*}
  \varsigma(\one{X}, \one{Y}&\one{Z})\varsigma^{-1}(\two{X}, t(\varsigma^{-1}(\three{Y}, \three{Z}))\two{Y}\two{Z})\varsigma(\three{X}, \four{Y})\\
=&\varsigma(\one{X}, \one{Y}\one{Z})\varsigma^{-1}(t(\varsigma^{-1}(\three{X}, \three{Y}))\two{X}\two{Y}, \two{Z})\varsigma(\four{X}, \four{Y}).
\end{align*}
The left hand side is equal to
\begin{align*}
    \varsigma(\one{X}, \one{Y}&\one{Z})\varsigma^{-1}(\two{X}, t(\varsigma^{-1}(\three{Y}, \three{Z}))\two{Y}\two{Z})\varsigma(\three{X}, \four{Y})\\
    =&\varsigma(\one{X}, \one{Y}\one{Z})\varsigma^{-1}(\two{X}, \two{Y}\two{Z})\varsigma(\three{X}s(\varsigma^{-1}(\three{Y}, \three{Z})), \four{Y})\\
    =&\epsilon(\one{X} \one{Y}\one{Z})\varsigma(\two{X}s(\varsigma^{-1}(\two{Y}, \two{Z})), \three{Y})\\
    =&\epsilon(\one{X} \one{Y}t(\eps(\one{Z})))\varsigma(\two{X}s(\varsigma^{-1}(\two{Y}, \two{Z})), \three{Y})\\
    =&\epsilon(\one{X} \one{Y})\varsigma(\two{X}s(\varsigma^{-1}(\two{Y}s(\eps(\one{Z})), \two{Z})), \three{Y})\\
    =&\epsilon(\one{X} t(\eps(\one{Y})))\varsigma(\two{X}s(\varsigma^{-1}(\two{Y},Z)), \three{Y})\\
    =&\epsilon(\one{X} )\varsigma(\two{X}s(\eps(\one{Y}))s(\varsigma^{-1}(\two{Y},Z)), \three{Y})\\
    =&\epsilon(\one{X} )\varsigma(\two{X}s(\varsigma^{-1}(\eps(\one{Y})\two{Y},Z)), \three{Y})\\
    =&\epsilon(\one{X} )\varsigma(\two{X}s(\varsigma^{-1}(\one{Y},Z)), \two{Y})\\
    =&\varsigma(Xs(\varsigma^{-1}(\one{Y},Z)), \two{Y}).
\end{align*}
The right hand side is equal to
\begin{align*}
    \varsigma(\one{X}, \one{Y}&\one{Z})\varsigma^{-1}(t(\varsigma^{-1}(\three{X}, \three{Y}))\two{X}\two{Y}, \two{Z})\varsigma(\four{X}, \four{Y})\\
    =&\varsigma(\one{X}, \one{Y}\one{Z})\varsigma^{-1}(\two{X}\two{Y}, \two{Z})\varsigma^{-1}(\three{X}, \three{Y})\varsigma(\four{X}, \four{Y})\\
    =&\varsigma(\one{X}, \one{Y}\one{Z})\varsigma^{-1}(\two{X}\two{Y}, \two{Z})\eps(\three{X}\three{Y})\\
    =&\varsigma(\one{X}, \one{Y}\one{Z})\varsigma^{-1}(\two{X}\two{Y}, \two{Z}),
\end{align*}
proving (\ref{equ. 2 cocycle and its inverse1}). Similarly for (\ref{equ. 2 cocycle and its inverse2}).
\end{proof}

Using such cotwists, it known from \cite{Boehm} that  a $B$-coring $\cL$ with the twisted product
\begin{align}\label{twistprod}
    X\cdot_{\varsigma} Y:=s(\varsigma(\one{X}, \one{Y}))t(\varsigma^{-1}(\three{X}, \three{Y}))\two{X}\two{Y},
\end{align}
for all $X, Y\in \cL$, and the original coproduct, counit, $s, t$ constitute  a left bilgebroid $\cL^\varsigma$.

\begin{thm}\label{thm. 2 cocycle twist}
Let $\cL$ be a left Hopf algebroid and $\varsigma\in {}_{B}\Hom_{B}(\cL\otimes_{B^e}\cL, B)$ be an invertible normalised 2-cocycle with inverse $\varsigma^{-1}$. Then the left bialgebroid $\cL^\varsigma$ with twisted product (\ref{twistprod}) is a left Hopf algebroid, with
\begin{align*}
   ( \lambda^{\varsigma})^{-1}(X\ot_{B} Y)=t(\varsigma(\two{\one{X}{}_{+}}, \three{\one{X}{}_{-}}))\one{\one{X}{}_{+}}\ot_{B^{op}}s(\varsigma^{-1}(\one{\one{X}{}_{-}}, \two{X}))\two{\one{X}{}_{-}}\rosso{\cdot_{\varsigma}}Y.
\end{align*}
\end{thm}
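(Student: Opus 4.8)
The plan is to take the displayed formula as the definition of a map $\mu:\cL\ot_B\cL\to\cL\ot_{B^{op}}\cL$ and verify that it is a two-sided inverse of $\lambda^\varsigma(X\ot_{B^{op}}Y)=\one{X}\ot_B\two{X}\cdot_\varsigma Y$. The key simplifying observation is that the coproduct, counit, $s$ and $t$ of $\cL^\varsigma$ coincide with those of $\cL$, so the two balanced tensor products and the map $\lambda^\varsigma$ are literally the same underlying maps as for $\cL$, the only change being that the multiplication appearing in the second leg is the twisted product $\cdot_\varsigma$ of (\ref{twistprod}). Since the bialgebroid cotwist result recalled just before the theorem already gives that $\cL^\varsigma$ is a left $B$-bialgebroid, only the invertibility of $\lambda^\varsigma$ remains to be shown.

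First I would check that $\mu$ is well defined, i.e. that it descends to the balanced tensor product $\cL\ot_B\cL$ in the source and lands in $\cL\ot_{B^{op}}\cL$ in the target. This is where the $B$-bilinearity of $\varsigma,\varsigma^{-1}$ (they lie in ${}_B\Hom_B(\cL\ot_{B^e}\cL,B)$) combines with the behaviour of the translation map $X\mapsto X_{+}\ot_{B^{op}}X_{-}$ of (\ref{X+-}) under $s$ and $t$, namely (\ref{equ. source and target map with lambda inv 1})--(\ref{equ. source and target map with lambda inv 6}); sliding the commuting images $s(B)$ and $t(B)$ across the balancing relations $t(b)X\ot_B Y=X\ot_B s(b)Y$ and $Xt(b)\ot_{B^{op}}Y=X\ot_{B^{op}}t(b)Y$ is what makes the two cocycle insertions compatible with the tensor products.

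Because $Y$ enters the formula only through the final factor $\cdots\cdot_\varsigma Y$, I would then read off the twisted translation pair $X_{+_\varsigma}\ot_{B^{op}}X_{-_\varsigma}:=\mu(X\ot_B 1)$, with $X_{+_\varsigma}=t(\varsigma(\two{\one{X}_+},\three{\one{X}_-}))\one{\one{X}_+}$ and $X_{-_\varsigma}=s(\varsigma^{-1}(\one{\one{X}_-},\two{X}))\two{\one{X}_-}$, so that $\mu(X\ot_B Y)=X_{+_\varsigma}\ot_{B^{op}}X_{-_\varsigma}\cdot_\varsigma Y$. Using associativity of $\cdot_\varsigma$ and that $1$ is its unit, the identity $\lambda^\varsigma\circ\mu=\id$ reduces to the single `twisted' identity $\one{(X_{+_\varsigma})}\ot_B\two{(X_{+_\varsigma})}\cdot_\varsigma X_{-_\varsigma}=X\ot_B1$, the analogue of (\ref{equ. inverse lamda 1}), while $\mu\circ\lambda^\varsigma=\id$ reduces to $(\one{X})_{+_\varsigma}\ot_{B^{op}}(\one{X})_{-_\varsigma}\cdot_\varsigma\two{X}=X\ot_{B^{op}}1$, the analogue of (\ref{equ. inverse lamda 2}). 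Each is proved by the same mechanism: expand $\cdot_\varsigma$ via (\ref{twistprod}); convert the iterated coproducts of $\one{X}_+$ and $\one{X}_-$ back into translation maps of coproducts of $X$ using (\ref{equ. inverse lamda 5}) and (\ref{equ. inverse lamda 6}); collapse the resulting $\varsigma,\varsigma^{-1}$ pairs with the two relations of Lemma~\ref{lemma. 2 cocycle and its inverse} together with the normalisation and cocycle conditions; and finally annihilate the leftover translation legs with (\ref{equ. inverse lamda 1}), (\ref{equ. inverse lamda 2}), (\ref{equ. inverse lamda 7}) and (\ref{equ. inverse lamda 8}).

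The hard part will be purely the bookkeeping in this telescoping. The twist couples the two tensor legs, so $\lambda^\varsigma$ is not a leg-wise gauge transform of $\lambda$, and the translation map injects further coproducts through (\ref{equ. inverse lamda 5})--(\ref{equ. inverse lamda 6}), so one is juggling many Sweedler components at once. The delicate point is to apply (\ref{equ. 2 cocycle and its inverse1}) and (\ref{equ. 2 cocycle and its inverse2}) in exactly the order that lines the $\varsigma/\varsigma^{-1}$ cancellations up with the Hopf-algebroid legs $X_{+},X_{-}$, while correctly sliding the commuting images $s(B),t(B)$ through the balanced tensor products; getting this matching right is the only real obstacle, the remainder being forced once the reduction to the two fundamental identities is in place.
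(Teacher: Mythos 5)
Your proposal is correct and follows essentially the same route as the paper's proof: well-definedness of the candidate inverse via the source/target compatibility relations (\ref{equ. source and target map with lambda inv 1})--(\ref{equ. source and target map with lambda inv 6}), reduction to checking the two compositions on $X\ot_{B}1$ and $X\ot_{B^{op}}1$, and the telescoping of the $\varsigma,\varsigma^{-1}$ insertions using (\ref{equ. inverse lamda 1})--(\ref{equ. inverse lamda 8}) together with Lemma~\ref{lemma. 2 cocycle and its inverse}. The only remaining work is the Sweedler-index bookkeeping you already identify as the sole obstacle, which is exactly how the paper's two long displayed computations proceed.
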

\begin{proof}
We need to show that $\lambda^{\varsigma}: X\ot_{B^{op}}Y\mapsto \one{X}\ot_{B}\two{X}\cdot_{\varsigma}Y$ is an invertible map with inverse of $\lambda^{\varsigma}$ as stated.  For this formula to be well defined, we  need to show that it factors through $\one{X}\tens_{B}\two{X}$,
\begin{align*}
    t(\varsigma&(\two{(t(b)\one{X}){}_{+}}, \three{(t(b)\one{X}){}_{-}}))\one{(t(b)\one{X}){}_{+}}\ot_{B^{op}}s(\varsigma^{-1}(\one{(t(b)\one{X}){}_{-}}, \two{X}))\two{(t(b)\one{X}){}_{-}}\\
    %=&t(\varsigma(\two{\one{X}{}_{+}}, \three{(\one{X}{}_{-}t(b))}))\one{\one{X}{}_{+}}\ot_{B^{op}}s(\varsigma^{-1}(\one{(\one{X}{}_{-}t(b))}, \two{X}))\two{(\one{X}{}_{-}t(b))}\\
    =&t(\varsigma(\two{\one{X}{}_{+}}, \three{(\one{X}{}_{-}s(b))}))\one{\one{X}{}_{+}}\ot_{B^{op}}s(\varsigma^{-1}(\one{(\one{X}{}_{-}s(b))}, \two{X}))\two{(\one{X}{}_{-}s(b))}\\
    =&t(\varsigma(\two{\one{X}{}_{+}}, \three{\one{X}{}_{-}}))\one{\one{X}{}_{+}}\ot_{B^{op}}s(\varsigma^{-1}(\one{\one{X}{}_{-}}, s(b)\two{X}))\two{\one{X}{}_{-}},
\end{align*}
where the 1st step uses (\ref{equ. source and target map with lambda inv 5}). Similar checks for other cases such as $\one{X}{}_{+}\tens_{B^{op}}\one{X}{}_{-}$ etc.  as well as the $X\tens_BY$ are easier and left to the reader.  Next, it is sufficient to check $(\lambda^{\varsigma})^{-1}\circ\lambda^{\varsigma}$ and $\lambda^{\varsigma}\circ(\lambda^{\varsigma})^{-1}$ on $X\ot_{B^{op}}1$ and $X\ot_{B} 1$. On one side, we have
\begin{align*}
 (\lambda^{\varsigma})^{-1}&\circ\lambda^{\varsigma}(X\ot_{B^{op}}1)\\
   =&t(\varsigma(\two{\one{X}{}_{+}}, \three{\one{X}{}_{-}}))\one{\one{X}{}_{+}}\ot_{B^{op}}(s(\varsigma(\one{\one{X}{}_{-}}, \two{X}))\two{\one{X}{}_{-}})\cdot_{\varsigma}\three{X}\\
   =&t(\varsigma(\two{\one{X}{}_{+}}, \five{\one{X}{}_{-}}))\one{\one{X}{}_{+}}\\
& \quad  \ot_{B^{op}}s(\varsigma(s(\varsigma^{-1}(\one{\one{X}{}_{-}}, \two{X}))\two{\one{X}{}_{-}}, \three{X}))t(\varsigma^{-1}(\four{\one{X}{}_{-}}, \five{X}))\three{\one{X}{}_{-}}\four{X}\\
   =&t(\varsigma(\two{\one{X}{}_{+}}, \five{\one{X}{}_{-}}))\one{\one{X}{}_{+}}\\
  &\quad  \ot_{B^{op}}s(\varsigma^{-1}(\one{\one{X}{}_{-}}, \two{X})\varsigma(\two{\one{X}{}_{-}}, \three{X}))t(\varsigma^{-1}(\four{\one{X}{}_{-}}, \five{X}))\three{\one{X}{}_{-}}\four{X}\\
   =&t(\varsigma(\two{\one{X}{}_{+}{}_{+}}, \two{\one{X}{}_{+}{}_{-}}))\one{\one{X}{}_{+}{}_{+}}\ot_{B^{op}}t(\varsigma^{-1}(\one{\one{X}{}_{+}{}_{-}}, \three{X}))\one{X}{}_{-}\two{X}\\
   =&t(\varsigma(\two{\one{X}{}_{+}}, \two{\one{X}{}_{-}}))\one{\one{X}{}_{+}}\ot_{B^{op}}t(\varsigma^{-1}(\one{\one{X}{}_{-}}, \two{X}))\\
   =&t(\varsigma(\two{\one{X}{}_{+}}, \two{\one{X}{}_{-}}))\one{\one{X}{}_{+}}t(\varsigma^{-1}(\one{\one{X}{}_{-}}, \two{X}))\ot_{B^{op}} 1\\
   =&t(\varsigma(\two{\one{X}{}_{+}}s(\varsigma^{-1}(\one{\one{X}{}_{-}}, \two{X})), \two{\one{X}{}_{-}}))\one{\one{X}{}_{+}}\ot_{B^{op}} 1\\
   =&t(\varsigma(\two{\one{X}{}_{+}}, \one{\one{X}{}_{-}}\two{X})\varsigma^{-1}(\three{\one{X}{}_{+}}\two{\one{X}{}_{-}}, \three{X}))\one{\one{X}{}_{+}}\ot_{B^{op}} 1\\
   =&t(\varsigma(\two{\one{X}{}_{+}{}_{+}}, \one{X}{}_{-}\two{X})\varsigma^{-1}(\three{\one{X}{}_{+}{}_{+}}\one{X}{}_{+}{}_{-}, \three{X}))\one{\one{X}{}_{+}{}_{+}}\ot_{B^{op}} 1\\
   =&t(\eps(\two{\one{X}{}_{+}})\varsigma^{-1}(\three{\one{X}{}_{+}}\one{X}{}_{-}, \two{X}))\one{\one{X}{}_{+}}\ot_{B^{op}} 1\\
   =&t(\varsigma^{-1}(\two{\one{X}{}_{+}}\one{X}{}_{-}, \two{X}))\one{\one{X}{}_{+}}\ot_{B^{op}} 1\\
   =&t(\eps(\two{X}))\one{X}\ot_{B^{op}} 1\\
   =&X\ot_{B^{op}} 1,
\end{align*}
where the 4th and 9th steps use (\ref{equ. inverse lamda 6}), the 5th and 10th steps use (\ref{equ. inverse lamda 2}), the 8th step use (\ref{equ. 2 cocycle and its inverse1}), the 12th step uses (\ref{equ. inverse lamda 1}). On the other side, we have
\begin{align*}
    \lambda^{\varsigma}\circ&(\lambda^{\varsigma})^{-1}(X\ot_{B}1)\\
    =&\one{\one{X}{}_{+}}\ot_{B}s(\varsigma(\two{\one{X}{}_{+}}, s(\varsigma^{-1}(\one{\one{X}{}_{-}}, \two{X}))\two{\one{X}{}_{-}}))\\
    &t(\varsigma^{-1}(t(\varsigma(\five{\one{X}{}_{+}}, \five{\one{X}{}_{-}}))\four{\one{X}{}_{+}}, \four{\one{X}{}_{-}}))\three{\one{X}{}_{+}}\three{\one{X}{}_{-}}\\
    =&\one{\one{X}{}_{+}}\ot_{B}s(\varsigma(\two{\one{X}{}_{+}}, s(\varsigma^{-1}(\one{\one{X}{}_{-}}, \two{X}))\two{\one{X}{}_{-}}))\three{\one{X}{}_{+}}\three{\one{X}{}_{-}}\\
    =&\one{\one{X}{}_{+}{}_{+}}\ot_{B}s(\varsigma(\two{\one{X}{}_{+}{}_{+}}, s(\varsigma^{-1}(\one{\one{X}{}_{-}}, \two{X}))\two{\one{X}{}_{-}}))\three{\one{X}{}_{+}{}_{+}}\one{X}{}_{+}{}_{-}\\
    =&\one{\one{X}{}_{+}}\ot_{B}s(\varsigma(\two{\one{X}{}_{+}}, s(\varsigma^{-1}(\one{\one{X}{}_{-}}, \two{X}))\two{\one{X}{}_{-}}))\three{\one{X}{}_{+}}{}_{+}\three{\one{X}{}_{+}}{}_{-}\\
    =&\one{\one{X}{}_{+}}\ot_{B}s(\varsigma(\two{\one{X}{}_{+}}, s(\varsigma^{-1}(\one{\one{X}{}_{-}}, \two{X}))\two{\one{X}{}_{-}}))s(\eps(\three{\one{X}{}_{+}}{}))\\
    =&\one{\one{X}{}_{+}}\ot_{B}s(\varsigma(\two{\one{X}{}_{+}}, s(\varsigma^{-1}(\one{\one{X}{}_{-}}, \two{X}))\two{\one{X}{}_{-}}))\\
    =&\one{\one{X}{}_{+}}\ot_{B}s(\varsigma(\two{\one{X}{}_{+}}, \one{\one{X}{}_{-}}\two{X})\varsigma^{-1}(\three{\one{X}{}_{+}}\two{\one{X}{}_{-}}, \three{X}))\\
    =&\one{\one{X}{}_{+}{}_{+}}\ot_{B}s(\varsigma(\two{\one{X}{}_{+}{}_{+}}, \one{X}{}_{-}\two{X})\varsigma^{-1}(\three{\one{X}{}_{+}{}_{+}}\one{X}{}_{+}{}_{-}, \three{X}))\\
    =&\one{\one{X}{}_{+}}\ot_{B}s(\eps(\two{\one{X}{}_{+}}))\varsigma^{-1}(\three{\one{X}{}_{+}}\one{X}{}_{-}, \two{X}))\\
    =&\one{\one{X}{}_{+}}\ot_{B}s(\varsigma^{-1}(\two{\one{X}{}_{+}}\one{X}{}_{-}, \two{X}))\\
    =&\one{X}\ot_{B} s(\eps(\two{X}))\\
    =&t(\eps(\two{X}))\one{X}\ot_{B} 1\\
    =&X\ot_{B} 1
\end{align*}
where the 3rd and 8th steps use (\ref{equ. inverse lamda 6}), the 4th step uses (\ref{equ. inverse lamda 5}) , the 5th step uses (\ref{equ. inverse lamda 8}), the 7th step use (\ref{equ. 2 cocycle and its inverse1}), the 11th step use (\ref{equ. inverse lamda 1}).

\end{proof}

\begin{rem} The right-handed version of the theorem also holds. Thus, for  an invertible
normalised 2-cocycle $\varsigma$ on a left bialgebroid $\cL$, if the map $\mu: \cL\ot^{B^{op}} \cL\to \cL\ot_{B} \cL$ given by
\begin{align*}
    \mu(X\ot^{B^{op}} Y):=\one{Y}X\ot_{B}\two{Y}
\end{align*}
is invertible, then $\mu^{\varsigma}: \cL^{\varsigma}\ot^{B^{op}} \cL^{\varsigma}\to \cL^{\varsigma}\ot_{B} \cL^{\varsigma}$ given by $\mu^{\varsigma}(X\ot^{B^{op}} Y):=\one{Y}\cdot_{\varsigma}X\ot_{B}\two{Y}$ is invertible as well. The balanced tensor product on the left side is given by  $X\ot^{B^{op}}Ys(b)=s(b)X\ot^{B^{op}}Y$, while the balanced tensor product on the right side is given by $X\ot_{B}s(b)Y=t(b)X\ot_{B}Y$. More explicitly, if
the image of $\mu^{-1}(1\ot_{B}X)$ is denoted by $X_{(-)}\ot^{B^{op}}X_{(+)}$, then
\begin{align*}
    (\mu^{\varsigma})^{-1}(1\ot_{B}X)=t(\varsigma(\three{\two{X}{}_{(-)}}, \one{X}))\two{\two{X}{}_{(-)}}\ot^{B^{op}}s(\varsigma^{-1}(\one{\two{X}{}_{(+)}}, \one{\two{X}{}_{(-)}}))\two{\two{X}{}_{(+)}}.
\end{align*}
\end{rem}

\subsection{Cocycle Hopf algebroids}

Here we directly build a Hopf algebroid from the data for any cleft cocycle extension.

\begin{prop}\label{prop cocy Hopf algebroid} Associated to a cocycle cross product $B\#_\sigma H$, we have a {\em cocycle  Hopf algebroid}
$B^e \#_\sigma  H$ built on $B^e\tens H$
with product
\[(b\tens b'\#_{\sigma} h)(c\tens c'\#_{\sigma} g)=b(\one{h}\triangleright c)\sigma(\two{h}, \one{g})\ot c' (S(\four{g})\triangleright  b')\sigma(S(\three{g}), S(\four{h}))\#_{\sigma} \three{h}\two{g}\]
and source and target maps
\[s(b)=b\ot1\#_{\sigma} 1, \quad t(b)=1\ot b\#_{\sigma} 1,\quad \eta(b\ot b')=s(b)t(b')=b\ot b'\#_{\sigma} 1.\]
The coproduct is
\[ \Delta (b\tens b'\#_{\sigma} h)=(b\tens \sigma^{-1}(S(\two{h}), \three{h})\#_{\sigma} \one{h})\tens_B (1\tens b'\#_{\sigma} \four{h}),\]
where the $B$-bimodule structure is
\[ c. (b\ot b'\#_{\sigma} h). c'=cb\ot b' (S(\two{h})\triangleright c')\#_{\sigma} \one{h}.\]
The counit is
\[\eps(b\ot b'\#_{\sigma} h)=b(\one{h}\triangleright b')\sigma(\two{h}, S(\three{h})).\]
The inverse of $\lambda$ is given by
\begin{align*}
    \lambda^{-1}(X\ot_{B}Y)=(b\ot 1\ot\one{h})\ot_{B^{op}}(b'\ot \sigma^{-1}(S^{2}(\four{h}), S(\three{h}))\#_{\sigma}S(\five{h}))Y.
\end{align*}
\end{prop}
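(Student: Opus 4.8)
The plan is to verify that the cocycle Hopf algebroid $B^e\#_\sigma H$ is first a left bialgebroid, and then that the stated $\lambda^{-1}$ genuinely inverts the canonical map $\lambda(X\ot_{B^{op}}Y)=\one{X}\ot_B\two{X}Y$. For the bialgebroid part, I would check that the product is associative (this is where the cocycle conditions of Lemma~\ref{lemma. unital twist} and condition~(4) of the cocycle definition enter, on both the $B$ and $B^{op}$ factors), that $s$ and $t$ are commuting algebra maps into the centre of the appropriate actions, that $\Delta$ is coassociative and corestricts to an algebra map into the Takeuchi product $\cL\times_B\cL$, and that $\eps$ is a left character. The coproduct and counit here are those of the Ehresmann--Schauenburg bialgebroid transported to the vector space $B^e\tens H$, so most of these are routine Sweedler-notation manipulations using conditions (1)--(4) of the cocycle and the identities in Lemma~\ref{lemma. unital twist}; I would not grind through them but simply note the key cancellation points (in particular that $\eps$ is well-defined as a left character follows from $\sigma(\two{h},S(\three{h}))$ appearing and Lemma~\ref{lemma. unital twist}(4)).

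The substantive step is the inversion of $\lambda$. My approach would be to avoid checking the bialgebroid axioms and invertibility from scratch, and instead invoke the general machinery: $B^e\#_\sigma H$ is a special case of the Ehresmann--Schauenburg bialgebroid $\CL(P,H)$ for the cleft extension $P=B\#_\sigma H$, which by Theorem~\ref{ESHopf} is automatically a left Hopf algebroid. Thus invertibility of $\lambda$ is not in question and I only need to confirm that the explicit formula stated matches $\lambda^{-1}$. To do this, I would compute $\lambda\circ\lambda^{-1}$ and $\lambda^{-1}\circ\lambda$ directly on elements $X=(b\ot b'\#_\sigma h)$ against $Y$, following the pattern of the $B=k$ Hopf-algebra inverse $h\tens g\mapsto \one{h}\tens S(\two{h})g$ but corrected by the cocycle factor $\sigma^{-1}(S^2(\four{h}),S(\three{h}))$ that accounts for the noncommutativity of the extension. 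It suffices to check the inverse on $X\ot_B 1$ and $X\ot_{B^{op}}1$, as in Lemma~\ref{lemS} and Theorem~\ref{thm. 2 cocycle twist}.

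The main obstacle will be managing the bookkeeping of the many Sweedler legs of $h$ (the formula already involves five of them) together with the antipode $S$ appearing inside the cocycle arguments as $S^2(\four{h})$ and $S(\three{h})$. The cancellations rely on repeatedly folding pairs $\sigma(\cdot,S(\cdot))\sigma^{-1}(S(\cdot),\cdot)$ against each other using Lemma~\ref{lemma. unital twist}(4) and the coassociativity/antipode axioms, and on pushing the $B^{op}$-factor's action $S(\four{g})\triangleright b'$ through the product correctly. I expect the computation that $\lambda^{-1}$ is well defined over the balanced tensor products $\ot_B$ and $\ot_{B^{op}}$ — i.e.\ that the formula respects the relations $t(c)X\ot_B Y=X\ot_B s(c)Y$ on the domain and lands in the correct $\ot_{B^{op}}$ — to be the most delicate point, since it is exactly where the target-map twist on the $b'$ slot interacts with $\sigma$. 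Once well-definedness is secured, verifying $\lambda\lambda^{-1}=\id$ and $\lambda^{-1}\lambda=\id$ is a finite (if lengthy) telescoping of cocycle identities, and I would present it as such rather than in full.
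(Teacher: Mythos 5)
Your proposal is correct and follows essentially the same route as the paper: the authors likewise verify the $B$-coring and $B^e$-ring axioms directly (using Lemma~\ref{lemma. unital twist} for the counit identities and the Takeuchi-product check) and then explicitly defer both the algebra-map property of $\Delta$ and the invertibility of $\lambda$ to the general Ehresmann--Schauenburg result, Theorem~\ref{ESHopf}, via the identification of $B^e\#_\sigma H$ with $\CL(B\#_\sigma H,H)$. Your additional plan to confirm the explicit formula for $\lambda^{-1}$ by evaluating $\lambda\circ\lambda^{-1}$ and $\lambda^{-1}\circ\lambda$ on $X\ot_B 1$ and $X\ot_{B^{op}}1$ is sound and, if anything, slightly more thorough than the printed proof, which omits that computation entirely.
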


\begin{proof} First, we show $B^{e}\#_{\sigma} H$ is a $B$-coring and $B^{e}$-ring. It is not hard to see that both the coproduct and and the counit are $B$-bimodule maps. Moreover,
\begin{align*}
    (\eps\ot_{B}\id)(\Delta(b\ot b'\#_{\sigma} h))=&b(\one{h}\triangleright\sigma^{-1}(S(\four{h}), \five{h}))\sigma(\two{h}, S(\three{h}))\ot b'\#_{\sigma}\six{h}=b\ot b'\#_{\sigma} h,
\end{align*}
and
\begin{align*}
    (\id\ot_{B}\eps)&(\Delta(b\ot b'\#_{\sigma} h))=b\ot \sigma^{-1}(S(\three{h}), \four{h})(S(\two{h})\triangleright \big((\five{h}\triangleright b')\sigma(\six{h}, S(\seven{h}))\big))\#_{\sigma}\one{h}\\
    =&b\ot (S(\four{h})\five{h})\triangleright b')\sigma^{-1}(S(\three{h}), \six{h})(S(\two{h})\triangleright\sigma(\seven{h}, S(\eight{h})))\#_{\sigma}\one{h}\\
    =&b\ot b' \sigma^{-1}(S(\five{h}), \six{h})\sigma(S(\four{h}), \seven{h})\sigma(S(\three{h})\eight{h}, S(\eleven{h}))\\
    &\sigma^{-1}(S(\two{h}), \nine{h}S(\ten{h}))\#_{\sigma}\one{h}\\
    =&b\ot b'\#_{\sigma} h,
\end{align*}
where we use Lemma \ref{lemma. unital twist}  repeatedly. Also,
\begin{align*}
    (\id\ot_{B}&\Delta)(\Delta(b\ot b'\#_{\sigma} h))\\
    =&(b\ot\sigma^{-1}(S(\two{h}), \three{h}))\#_{\sigma}\one{h})\ot_{B}(1\ot \sigma^{-1}(S(\five{h}), \six{h})\#_{\sigma}\four{h})\ot_{B}(1\ot b'\#_{\sigma} \seven{h})\\
    =&(\Delta\ot_{B}\id)(\Delta(b\ot b'\#_{\sigma} h)).
\end{align*}
For the $B^{e}$ structure, it is easy to see that $\eta$ is an algebra map. It is a direct computation that the product is associative, since $\sigma$ is a 2-cocycle. Next, we check that the image of coproduct belongs to the Takeuchi product. Let $X=b\ot b'\ot h$ and $d\in B$. Then
\begin{align*}
    \one{X}\ot_{B}\two{X}s(d)=&(b\tens \sigma^{-1}(S(\two{h}), \three{h})\#_{\sigma} \one{h})\tens_B (\four{h}\triangleright d\tens b'\#_{\sigma} \five{h})\\
    =&t(\four{h}\triangleright d)(b\tens \sigma^{-1}(S(\two{h}), \three{h})\#_{\sigma} \one{h})\tens_B (1\tens b'\#_{\sigma} \five{h})\\
    =&b\ot \sigma^{-1}(S(\three{h}), \four{h})S(\two{h})\triangleright(\five{h}\triangleright d)\#_{\sigma}\one{h}\ot_{B}(1\ot b'\#_{\sigma} \six{h})\\
    =&(b\ot d \sigma^{-1}(S(\two{h}), \three{h})\#_{\sigma} \one{h})\tens_B 1\tens b'\#_{\sigma} \four{h})\\
    =&\one{X}t(d)\ot_{B}\two{X}.
\end{align*}
One can then show by direct computation that the coproduct is an algebra map and $\lambda^{-1}$ is the inverse of $\lambda$.
We omit details since this follows from a more general result Theorem~\ref{ESHopf} later.
\end{proof}

Next, we show that $B^{e}\#_{\sigma}H$ has an antipode in the sense of Lemma~\ref{lemS} at least in the associative type case.

\label{equ. inverse of antipode of cleft extension}

\begin{prop}\label{prop. antipode of cleft extension} Let $B\#_{\sigma}H$ be of associative type with an invertible antipode of $H$. Then
\begin{align*}
    \Ss (b\ot b'\#_{\sigma}h)&:=b'\ot \sigma^{-1}(S^{2}(\two{h}), S(\one{h}))S^{2}(\three{h})\triangleright b\#_{\sigma}S(\four{h})\\
    \Ss^{-1} (b\ot b'\#_{\sigma}h)&:=(\two{h}\triangleright b')\sigma(\three{h}, S(\four{h}))\ot  b\#_{\sigma}S^{-1}(\one{h}).
\end{align*}
is a left antipode for the Hopf algebroid $B^{e}\#_{\sigma}H$, where $S$ on the right denotes the antipode of $H$.
\end{prop}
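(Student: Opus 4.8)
The plan is to verify that $S$ and $S^{-1}$ meet the hypotheses of Lemma~\ref{lemS}: namely that they are mutually inverse and that $S$ obeys the three axioms (1)--(3) there. By that lemma this makes $B^e\#_\sigma H$ a left Hopf algebroid, and as a consistency check one can confirm that the resulting $\lambda^{-1}=S^{-1}(\two{S(X)})\otimes_{B^{op}}\one{S(X)}Y$ agrees with the formula already recorded in Proposition~\ref{prop cocy Hopf algebroid}.

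First I would confirm $S\circ S^{-1}=S^{-1}\circ S=\id$ on a generic element $b\otimes b'\#_\sigma h$: the first $B^e$-slot and the $H$-leg untangle using that $S$ is invertible on $H$, while the $\sigma$-terms must be collapsed. This is where associative type enters, via Lemma~\ref{lem: associative type}, which slides $\sigma^{-1}(S(\one{h}),\two{h})$ past an $S$-twisted action; combined with items (4) and (6) of Lemma~\ref{lemma. unital twist}, the matched $\sigma\sigma^{-1}$ pairs cancel and the composite telescopes to the identity. Axiom (1) is then direct: the map $S$ interchanges the two slots of $B^e$, so using the bimodule structure $c.(b\otimes b'\#_\sigma h).c'=cb\otimes b'(S(\two{h})\triangleright c')\#_\sigma\one{h}$ one computes $t(b)X$ and $s(b)X$ and reads off $S(t(b)X)=S(X)s(b)$ and $S^{-1}(s(b)X)=S^{-1}(X)t(b)$.

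The substantive part is axioms (2) and (3). For (3) I would take $\Delta(X)=\one{X}\otimes_B\two{X}$ from Proposition~\ref{prop cocy Hopf algebroid}, apply $S$ to the first leg, coproduct that result, and form $\one{S(\one{X})}\two{X}\otimes_B\two{S(\one{X})}$, aiming to reach $1\otimes_B S(X)$; axiom (2) is the dual computation with $S^{-1}$ applied to the second leg. Expanding produces long expressions in $\sigma,\sigma^{-1},S,\triangleright$ carrying many Sweedler legs of $h$ --- the coproduct alone contributes a factor $\sigma^{-1}(S(\two{h}),\three{h})$ in the target slot, and the antipode contributes further $\sigma^{-1}(S^2,S)$ factors together with an $S^2$-twisted action --- and the identities are forced by repeated application of Lemma~\ref{lemma. unital twist} and Lemma~\ref{lem: associative type} to annihilate $\sigma\sigma^{-1}$ pairs and reduce the $H$-factor to a counit. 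I expect the main obstacle to be exactly this bookkeeping, where the proliferation of Sweedler indices on $h$ makes the cancellations delicate. The structural reason associative type cannot be dropped is that Lemma~\ref{lem: associative type} is precisely what reconciles the $S^2$-twisted action inside $S$ with the $S$-twisted right $B$-action built into the coproduct and bimodule structure of $B^e\#_\sigma H$; without it these terms no longer commute past $\sigma^{-1}(S(\cdot),\cdot)$ and the required cancellations break down.
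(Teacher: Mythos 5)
Your proposal follows essentially the same route as the paper: verify that $\Ss$ and $\Ss^{-1}$ are mutually inverse using Lemma~\ref{lem: associative type} together with parts (4) and (6) of Lemma~\ref{lemma. unital twist}, check axiom (1) directly from the bimodule structure, and then establish axioms (2) and (3) of Lemma~\ref{lemS} by expanding the coproduct and telescoping the $\sigma\sigma^{-1}$ pairs via the same two lemmas. You have correctly identified both the key ingredients and the precise point where associative type is indispensable, so the plan matches the paper's proof in all essentials.
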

\begin{proof}
Let $X=b\ot b'\#_{\sigma}h$ and define a linear map $\Ss: B^{e}\#_{\sigma}H\to B^{e}\#_{\sigma}H$ as stated  with the inverse given by the formula stated. Then
\begin{align*}
    \Ss^{-1} (\Ss (X))=&\Ss^{-1} (b'\ot \sigma^{-1}(S^{2}(\two{h}), S(\one{h}))S^{2}(\three{h})\triangleright b\#_{\sigma}S(\four{h}))\\
    =&\two{g}\la(\sigma^{-1}(S(\six{g}),  \seven{g})S(\five{g})\la b)\sigma(\three{g}, S(\four{g}))\ot b'\#_{\sigma}S^{-1}(\one{g})\\
    =&\two{g}\la((S(\eight{g})\la b))\three{g}\la(\sigma^{-1}(S(\six{g}),  \seven{g}))\sigma(\four{g}, S(\five{g}))\ot b'\#_{\sigma}S^{-1}(\one{g})\\
    =&b\ot b'\#_{\sigma}S^{-1}(g)
    =X.
\end{align*}
In the 2nd step, we let $g:=S(h)$. In the 3rd step we use Lemma~\ref{lem: associative type}. In the 4th step we use (4) of Lemma \ref{lemma. unital twist}. On the other side,
\begin{align*}
    \Ss (\Ss^{-1} (X))=&\Ss ((\two{h}\triangleright b')\sigma(\three{h}, S(\four{h}))\ot  b\#_{\sigma}S^{-1}(\one{h}))\\
    =&b\ot \sigma^{-1}(S(\three{h}), \four{h})S(\two{h})\la((\five{h}\la b')\sigma(\six{h}, S(\seven{h})))\#_{\sigma}\one{h}\\
    =&b\ot S(\four{h})\la((\five{h}\la b')\sigma(\six{h}, S(\seven{h})))\sigma^{-1}(S(\two{h}), \three{h})\#_{\sigma}\one{h}\\
    =&b\ot b' \sigma^{-1}(S(\three{h}), \four{h})S(\two{h})\la\sigma(\five{h}, S(\six{h})))\#_{\sigma}\one{h}\\
    =&b\ot b'\#_{\sigma} h.
\end{align*}
In the 3rd and 4th steps we use Lemma~\ref{lem: associative type}. In the 5th step we use (6) of Lemma \ref{lemma. unital twist}.

It remains to check that $S$ and its inverse satisfy the conditions of Lemma \ref{lemS}. By direct computation,
\begin{align*}
    \Ss (t(d)X)=b'(S(\five{h})\triangleright d)\ot \sigma^{-1}(S^{2}(\two{h}), S(\one{h}))S^{2}(\three{h})\triangleright b\#_{\sigma} S^{2}(\four{h})=\Ss (X)s(d).
\end{align*}
Similarly for $S^{-1}(s(b)X)=S(X)t(b)$.
For condition (2) of Lemma \ref{lemS}, we have
\begin{align*}
   &\kern-10pt \one{\Ss^{-1} (\two{X})}\ot_{B}\one{\Ss ^{-1}(\two{X})}\one{X}\\
    =&(\eight{h}\triangleright b')\sigma(\nine{h}, S(\ten{h}))\ot \sigma^{-1}(\six{h}, S^{-1}(\five{h}))\#_{\sigma}S^{-1}(\seven{h})\\
    &\quad \ot_{B}(1\ot 1\#_{\sigma} S^{-1}(\four{h}))(b\ot \sigma^{-1}(S(\two{h}), \three{h})\#_{\sigma} \one{h})\\
    =&(\seven{h}\triangleright b')\sigma(\eight{h}, S(\nine{h}))\ot \sigma^{-1}(\five{h}, S^{-1}(\four{h}))\#_{\sigma}S^{-1}(\six{h})\\
    &\quad \ot_{B} s((S^{-1}(\three{h})\triangleright b)\sigma(S^{-1}(\two{h}), \one{h}))\\
    =&(\eight{h}\triangleright b')\sigma(\nine{h}, S(\ten{h}))\ot \sigma^{-1}(\five{h}, S^{-1}(\four{h}))\six{h}\la ((S^{-1}(\three{h})\triangleright b)\sigma(S^{-1}(\two{h}), \one{h}))\\ &\quad  \#_{\sigma}S^{-1}(\seven{h}) \ot_{B} 1_{\cL}\\
    =&(\eight{h}\triangleright b')\sigma(\nine{h}, S(\ten{h}))\ot \four{h}\la ((S^{-1}(\three{h})\triangleright b)\sigma(S^{-1}(\two{h}), \one{h})) \sigma^{-1}(\six{h}, S^{-1}(\five{h}))\\ &\quad \#_{\sigma}S^{-1}(\seven{h}) \ot_{B} 1_{\cL}\\
    =&(\seven{h}\triangleright b')\sigma(\eight{h}, S(\nine{h}))\ot b (\three{h}\triangleright \sigma(S^{-1}(\two{h}), \one{h})) \sigma^{-1}(\five{h}, S^{-1}(\four{h}))\#_{\sigma}S^{-1}(\six{h})\ot_{B} 1_{\cL}\\
    =&(\seven{h}\triangleright b')\sigma(\eight{h}, S(\nine{h}))\ot b \sigma^{-1}(\four{h}, S^{-1}(\three{h}))(\five{h}\triangleright \sigma(S^{-1}(\two{h}), \one{h})) \#_{\sigma}S^{-1}(\six{h})\ot_{B} 1_{\cL}\\
    =&(\two{h}\triangleright b')\sigma(\three{h}, S(\four{h}))\ot  b\#_{\sigma}S^{-1}(\one{h})\ot_{B} 1_{\cL}\\
    =&\Ss^{-1} (X)\ot_{B} 1_{\cL},
\end{align*}
where the 4th and 6th steps use Lemma~\ref{lem: associative type}, the 7th step use (6) of Lemma \ref{lemma. unital twist}.
For condition (3),
\begin{align*}
    &\kern-15pt \one{\Ss (\one{X})}\two{X}\ot_{B}\two{\Ss (\one{X})}\\
    =&(\sigma^{-1}(S(\eight{h}), \nine{h})\ot \sigma^{-1}(S^{2}(\six{h}), S(\five{h}))\#_{\sigma}S(\seven{h}))(1\ot b'\#_{\sigma} \ten{h})\\
   &\quad \ot_{B}1\ot \sigma^{-1}(S^{2}(\two{h}), \one{h})S^{2}(\three{h})\triangleright b\#_{\sigma} S(\four{h})\\
    =&\sigma^{-1}(S(\ten{h}), \eleven{h})\sigma(S(\nine{h}), \twelve{h})\ot b' \big(S(\fifteen{h})\la \sigma^{-1}(S^{2}(\six{h}), S(\five{h}))\big)\sigma(S(\fourteen{h}), S^{2}(\seven{h}))\\ &\quad \#_{\sigma} S(\eight{h})\thirteen{h} \ot_{B}1\ot \sigma^{-1}(S^{2}(\two{h}), \one{h})S^{2}(\three{h})\triangleright b\#_{\sigma} S(\four{h})\\
    =&1\ot b' \big(S(\nine{h})\la \sigma^{-1}(S^{2}(\six{h}), S(\five{h}))\big)\sigma(S(\eight{h}), S^{2}(\seven{h}))\#_{\sigma} 1\\
   &\quad  \ot_{B}1\ot \sigma^{-1}(S^{2}(\two{h}), \one{h})S^{2}(\three{h})\triangleright b\#_{\sigma} S(\four{h})\\
    =&t(b')\ot_{B}1\ot \sigma^{-1}(S^{2}(\two{h}), \one{h})S^{2}(\three{h})\triangleright b\#_{\sigma} S(\four{h})\\
    =&1_{\cL}\ot_{B}b'\ot \sigma^{-1}(S^{2}(\two{h}), \one{h})S^{2}(\three{h})\triangleright b\#_{\sigma} S(\four{h})\\
    =&1_{\cL}\ot_{B}\Ss (X),
\end{align*}
where the 4th step uses Lemma~\ref{lem: associative type}. \end{proof}

\subsection{Cocycle Hopf algebroid in the associative type case}

We now use the above class of cocycle Hopf algebroids to illustrate our twisting theory.

\begin{lem}\label{lem. 2-cocycle on bialgebroid}
If $B\#_{\sigma} H$ is of associative type then the linear map $\tilde{\sigma}\in {}_{B}\Hom_{B}(B^{e}\# H\otimes_{B^e} B^{e}\# H)$  given by
\begin{align*}
    \tilde{\sigma}(b\ot b'\# h, c\ot c'\# g):=b(\one{h}\triangleright c)\sigma(\two{h}, \one{g})((\three{h}\two{g})\triangleright c')(\four{h}\triangleright b')
\end{align*}
for all $b, b', c, c'\in B$ and $h,g\in H$  is  an  invertible  normalised 2-cocycle.
\end{lem}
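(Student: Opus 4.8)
The plan is to verify directly that $\tilde\sigma$ meets every requirement in the definition of an invertible normalised 2-cocycle, reducing each check to the known properties of $\sigma$ from Section~\ref{seccocy} and Lemma~\ref{lemma. unital twist}. The first step is bookkeeping. Since $B\#_\sigma H$ is of associative type, $\la$ is an action and $B^e\# H$ is exactly the bialgebroid of Proposition~\ref{prop cocy Hopf algebroid} with trivial cocycle; here the structure maps collapse to $\Delta(b\ot b'\# h)=(b\ot 1\# \one{h})\ot_B(1\ot b'\# \two{h})$ and $\eps(b\ot b'\# h)=b(h\la b')$. I would record these simplifications first, as they drive all subsequent computations, and they already make clear that associative type is precisely what makes the domain of $\tilde\sigma$ a genuine bialgebroid.

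Next I would dispatch the structural conditions. Well-definedness of $\tilde\sigma$ on $\ot_{B^e}$, namely $\tilde\sigma(X\eta(a\ot a'),Y)=\tilde\sigma(X,\eta(a\ot a')Y)$, and the $B$-bimodule property $\tilde\sigma(s(d)t(e)X,Y)=d\,\tilde\sigma(X,Y)\,e$, both follow by short computations using only the measuring property of $\la$ and the antipode axiom of $H$, the relevant $B$-factors being moved through via identities of the shape $\five{h}S(\six{h})=\eps(\five{h})$. Normalisation, $\tilde\sigma(1_\cL,X)=\eps(X)=\tilde\sigma(X,1_\cL)$, is then immediate from condition (1) and the normalisation condition (3) for $\sigma$ in Section~\ref{seccocy}.

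The computational heart, and the main obstacle, is the left 2-cocycle identity $\tilde\sigma(X,s(\tilde\sigma(\one{Y},\one{Z}))\two{Y}\two{Z})=\tilde\sigma(s(\tilde\sigma(\one{X},\one{Y}))\two{X}\two{Y},Z)$. I would expand both sides by evaluating the inner $\tilde\sigma$, pushing its value through $s$, multiplying in $B^e\# H$, and feeding the result into the outer $\tilde\sigma$. The resulting long Sweedler expressions should then collapse by repeated use of the cocycle identity~(4) for $\sigma$ together with the associative type relation~(\ref{eqn. assoc type}), which is exactly what is needed to commute the $B$-valued factors $\sigma(\cdot,\cdot)$ past the module actions so that the two sides agree; Lemma~\ref{lem: associative type} is convenient for the reordering. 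This is where essentially all of the index manipulation lives and where care is required. It is also here that one sees why $\tilde\sigma$ is the right object: by design the twisted product $\cdot_{\tilde\sigma}$ of~(\ref{twistprod}) reproduces the $\sigma$-deformed product, though the cocycle property is established independently as above.

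Finally, for invertibility I would exhibit the convolution inverse $\tilde\sigma^{-1}$ by the same formula with $\sigma$ replaced by $\sigma^{-1}$ (in the appropriately reversed position), and check $\tilde\sigma\star\tilde\sigma^{-1}=\tilde\eps=\tilde\sigma^{-1}\star\tilde\sigma$ using the relations of Lemma~\ref{lemma. unital twist}; on the diagonal pieces this already reduces to $\sigma\star\sigma^{-1}=\eps\ot\eps$. I would then verify that $\tilde\sigma^{-1}$ is a normalised \emph{right} 2-cocycle. As the text stresses, this does not come for free on a bialgebroid, but it follows from the inverse cocycle relation in part~(1) of Lemma~\ref{lemma. unital twist} by the same kind of computation as for the left identity. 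Assembling these four steps shows that $\tilde\sigma$ is an invertible normalised 2-cocycle.
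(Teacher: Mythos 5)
Your proposal is correct and follows essentially the same route as the paper's proof: well-definedness over $\ot_{B^e}$ and $B$-bilinearity via the measuring property, the same formula for $\tilde\sigma^{-1}$ with the convolution check reducing to $\sigma\star\sigma^{-1}=\eps\ot\eps$, the left 2-cocycle identity collapsing to the cocycle condition (4) for $\sigma$ combined with the associative-type relation (\ref{eqn. assoc type}), and a separate (omitted in the paper as well) verification that $\tilde\sigma^{-1}$ is a normalised right 2-cocycle. The only cosmetic difference is that the paper reduces the cocycle identity directly via (\ref{eqn. assoc type}) rather than through Lemma~\ref{lem: associative type}, which is an equivalent manipulation.
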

\begin{proof}
Since $B\#_{\sigma} H$ is of associative type, we know that $B$ is a left $H$-module algebra so that the smash product $B\#H$ is well defined.
Let $X=b\ot b'\# h$, $Y=c\ot c'\# g$ and $Z=d\ot d'\# f$ be three elements in $B^{e}\# H$.
First we show that this cocycle is well defined over the balanced tensor over $B^e\#H$.  On the one hand, we have
\begin{align*}
    \tilde{\sigma}(X\eta(d\otimes d'), Y)) &=\tilde{\sigma}((b\ot b'\# h)\eta(d\otimes d'), c\ot c'\# g)\\
     &=b(\one{h}\triangleright d)(\two{h}\triangleright c)\sigma(\three{h}, \one{g})((\four{h}\two{g})\triangleright c')(\five{h}\triangleright(d'b')),
\end{align*}
for all $d, d'\in B$. On the other hand,
\begin{align*}
     \tilde{\sigma}(X, \eta(d\otimes d')Y))&=\tilde{\sigma}((b\ot b'\# h), \eta(d\otimes d')(c\ot c'\# g))\\
     &=\tilde{\sigma}(b\ot b'\# h, dc\otimes c'(S(\two{g})\triangleright d')\#\one{g})\\
     &=b(\one{h}\triangleright(dc))\sigma(\two{h}, \one{g})((\three{h}\two{g})\triangleright(c'(S(\three{g})\triangleright d')))(\four{h}\triangleright b')\\
     &=b(\one{h}\triangleright d)(\two{h}\triangleright c)\sigma(\three{h}, \one{g})((\four{h}\two{g})\triangleright c')(\five{h}\triangleright(d'b')).
\end{align*}
The inverse $\tilde{\sigma}^{-1}$ is given by
\begin{align*}
     \tilde{\sigma}^{-1}(X, Y):=b(\one{h}\triangleright c)\sigma^{-1}(\two{h}, \one{g})((\three{h}\two{g})\triangleright c')(\four{h}\triangleright b').
\end{align*}
We can see that
\begin{align*}
 \tilde{\sigma}^{-1}\star  \tilde{\sigma}(X, Y)&=\tilde{\sigma}^{-1}\star  \tilde{\sigma}(b\ot b'\# h, c\ot c'\# g)\\
 &=b(\one{h}\triangleright c)\sigma^{-1}(\two{h}, \one{g})\sigma(\three{h}, \two{g})((\four{h}\three{g})\triangleright c')(\five{h}\triangleright b')\\
 &=\epsilon((b\ot b'\# h)(c\ot c'\# g))\\
 &=\tilde{\epsilon}(X, Y),
\end{align*}
where $\tilde{\epsilon}$ is the unit in the algebra ${}_{B}\Hom_{B}(B^{e}\# H\otimes_{B^e} B^{e}\# H, B)$. Similarly for  $\tilde{\sigma}\star  \tilde{\sigma}^{-1}=\tilde{\epsilon}$.

It is clear that $\tilde{\sigma}$ is left $B$-linear. We show that it is also right $B$-linear,
\begin{align*}
    \tilde{\sigma}(t(b'')X, Y)&=\tilde{\sigma}(t(b'')(b\ot b'\# h, c\ot c'\# g)\\
    &=\tilde{\sigma}(b\otimes b'(S(\two{h})\triangleright b'')\# \one{h}, c\ot c'\# g)\\
    &=b(\one{h}\triangleright c)\sigma(\two{h}, \one{g})((\three{h}\two{g})\triangleright c')(\four{h}\triangleright (b'(S(\five{h})\triangleright b'')))\\
    &=b(\one{h}\triangleright c)\sigma(\two{h}, \one{g})((\three{h}\two{g})\triangleright c')(\four{h}\triangleright b')b''\\
    &=\tilde{\sigma}(X, Y)b'',
\end{align*}
for all $b''\in B$. Now we show the cocycle condition of $\tilde{\sigma}$. On the one hand,
\begin{align*}
    &\tilde{\sigma}(X, s(\tilde{\sigma}(\one{Y}, \one{Z}))\two{Y}\two{Z})\\
    &=\tilde{\sigma}(b\ot b'\# h, c(\one{g}\triangleright d)\sigma(\two{g}, \one{f})\otimes d'(S(\three{f})\triangleright c')\# \three{g}\two{f})\\
      &=b\Big(\one{h}\triangleright \big(c(\one{g}\triangleright d)\sigma(\two{g}, \one{f})\big)\Big)\sigma(\two{h}, \three{g}\two{f})\big((\three{h}\four{g}\three{f})\triangleright (d'(S(\four{f})\triangleright c'))\big)(\four{h}\triangleright b').
\end{align*}
On the other hand,
\begin{align*}
    \tilde{\sigma}(s(\tilde{\sigma}&(\one{X}, \one{Y}))\two{X}\two{Y}, Z)\\
    &=\tilde{\sigma}(b(\one{h}\triangleright c)\sigma(\two{h}, \one{g})\otimes c' (S(\three{g})\triangleright b')\#\three{h}\two{g}, d\otimes d'\# f)\\
    &=b(\one{h}\triangleright c)\sigma(\two{h}, \one{g})((\three{h}\two{g})\triangleright d)\sigma(\four{h}\three{g}, \one{f})((\five{h}\four{g}\two{f})\triangleright d')\\ &\quad \big((\six{h}\five{g})\triangleright(c'(S(\six{g})\triangleright b'))\big).
\end{align*}
Comparing the results on both hand sides, it is sufficient to show
\begin{align*}
    \one{h}\triangleright((\one{g}\triangleright d)\sigma(\two{g}, \one{f}))\sigma(\two{h}, \three{g}\two{f})=\sigma(\one{h}, \one{g})((\two{h}\two{g})\triangleright d)\sigma(\three{h}\three{g}, f).
\end{align*}
This holds as
\begin{align*}
     \one{h}\triangleright((\one{g}\triangleright d)&\sigma(\two{g}, \one{f}))\sigma(\two{h}, \three{g}\two{f})\\
     &=(\one{h}\triangleright (\one{g}\triangleright d))(\two{h}\triangleright\sigma(\two{g}, \one{f}))\sigma(\three{h}, \three{g}\two{f})\\
     &=(\one{h}\triangleright (\one{g}\triangleright d))\sigma(\two{h}, \two{g})\sigma(\three{h}\three{g}, f)\\
     &=\sigma(\one{h}, \one{g})((\two{h}\two{g})\triangleright d)\sigma(\three{h}\three{g}, f).
\end{align*}
Finally, we have the normalisation condition
\begin{align*}
    &\tilde{\sigma}(X, 1)=b(\one{h}\triangleright b')=\epsilon(X),\quad \tilde{\sigma}(1, X)=b(\one{h}\triangleright b')=\epsilon(X).
    \end{align*}
By using the same methods, one can check $\tilde{\sigma}^{-1}$ satisfies the definition of normalised right 2-cocycle. This is another long calculation, but sufficiently similar that we omit the details. As a result, $\tilde{\sigma}$ is an invertible normalised 2-cocycle. \end{proof}

\begin{lem}\label{lem. isomorphic map between bialgebroid}
If  $B\#_{\sigma} H$ is of associative type then the map
\begin{align*}
 \phi: B^{e}\# H\to B^{e}\#_{\sigma} H,\quad   \phi( b\otimes b'\# h):=b\otimes b'\sigma^{-1}(S(\two{h}), \three{h})\#_{\sigma}\one{h}
\end{align*}
for all $b,b'\in B$ and $h\in H$  is an invertible $B$-coring map.
\end{lem}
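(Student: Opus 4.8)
The plan is to verify directly the four properties that make $\phi$ an invertible $B$-coring map: that it is a $B$-bimodule map, that it intertwines the two coproducts, that it intertwines the two counits, and that it is bijective. Throughout I would use that $B^e\# H$ is the trivial-cocycle specialisation of $B^e\#_\sigma H$ of Proposition~\ref{prop cocy Hopf algebroid}, so that both bialgebroids carry the \emph{same} $B$-bimodule structure $c.(b\ot b'\# h).c'=cb\ot b'(S(\two{h})\triangleright c')\# \one{h}$, the same $s,t$, and coproducts and counits differing only by factors of $\sigma$. Conceptually, $\phi$ is the operation of inserting the factor $\sigma^{-1}(S(\two{h}),\three{h})$ into the second tensor slot next to the surviving group-like label $\one{h}$, and the whole proof is bookkeeping around this insertion.

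The two coalgebra conditions are essentially formal. Granting the $B$-bimodule property (so that $\phi\ot_B\phi$ is well defined on $\ot_B$), for the coproduct I would apply $\phi\ot_B\phi$ to $\Delta_{B^e\#H}(b\ot b'\# h)=(b\ot 1\# \one{h})\ot_B(1\ot b'\# \two{h})$ and compare with $\Delta_{B^e\#_\sigma H}(\phi(b\ot b'\# h))$; since the $B^e\#_\sigma H$ coproduct already produces exactly the factor $\sigma^{-1}(S(\two{h}),\three{h})$ that $\phi$ inserts, both sides collapse to $(b\ot \sigma^{-1}(S(\two{h}),\three{h})\#_\sigma\one{h})\ot_B(1\ot b'\sigma^{-1}(S(\five{h}),\six{h})\#_\sigma\four{h})$ after matching Sweedler indices. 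For the counit I would compute $\eps_\sigma(\phi(b\ot b'\# h))=b(\one{h}\triangleright b')(\two{h}\triangleright\sigma^{-1}(S(\five{h}),\six{h}))\sigma(\three{h},S(\four{h}))$ and recognise the last two factors as an instance of Lemma~\ref{lemma. unital twist}(4), collapsing them to leave $b(h\triangleright b')=\eps(b\ot b'\# h)$, the counit of $B^e\# H$.

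The substantive steps are the $B$-bimodule property and the invertibility, and both are where associative type enters. For the bimodule property I would compute $\phi(c.(b\ot b'\# h).c')$ and $c.\phi(b\ot b'\# h).c'$; after the routine reductions these agree precisely when $(S(\three{h})\triangleright c')\sigma^{-1}(S(\one{h}),\two{h})=\sigma^{-1}(S(\two{h}),\three{h})(S(\one{h})\triangleright c')$, which is exactly Lemma~\ref{lem: associative type}. For invertibility I would propose $\phi^{-1}(b\ot b'\#_\sigma h):=b\ot b'\sigma(S(\two{h}),\three{h})\# \one{h}$ and check both composites are the identity. Expanding $\phi^{-1}\circ\phi(b\ot b'\# h)$ yields $b\ot b'\,\sigma^{-1}(S(\four{h}),\five{h})\sigma(S(\two{h}),\three{h})\# \one{h}$, so the claim reduces to the reordered convolution identity $\sigma^{-1}(S(\three{h}),\four{h})\sigma(S(\one{h}),\two{h})=\eps(h)1$, with the mirror identity handling $\phi\circ\phi^{-1}$.

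I expect this last reordering to be the main obstacle, and it is here that associative type is indispensable. For a noncommutative $B$ the factors $\sigma(S(\one{h}),\two{h})$ and $\sigma^{-1}(S(\three{h}),\four{h})$ appear in the \emph{opposite} multiplicative order to that needed for the convolution cancellation $\sigma\star\sigma^{-1}=\eps$, so one must first commute them into adjacent position; I would do this by combining the associative-type condition (\ref{eqn. assoc type}) and Lemma~\ref{lem: associative type} with the identities of Lemma~\ref{lemma. unital twist}. Since the bimodule-map property already fails for a general cocycle $\sigma$ (the two orderings of $\sigma^{-1}$ relative to $S(\two{h})\triangleright c'$ do not agree), the restriction to associative type is genuinely needed, and it is used in exactly the same reordering guise for both the bimodule condition and the inversion.
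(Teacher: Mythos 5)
Your overall architecture matches the paper's proof: the counit check via Lemma~\ref{lemma. unital twist}, the right $B$-linearity via Lemma~\ref{lem: associative type}, and the essentially formal coproduct check are all exactly what the paper does. The gap is in the invertibility step. Your proposed inverse $\phi^{-1}(b\ot b'\#_{\sigma}h)=b\ot b'\sigma(S(\two{h}),\three{h})\#\one{h}$ is not the inverse, and the identity you reduce to, $\sigma^{-1}(S(\three{h}),\four{h})\,\sigma(S(\one{h}),\two{h})=\eps(h)1$, is false in general even for cocycles of associative type. To see this, specialise to a Galois object ($B=k$, trivial action), which is automatically of associative type, so every Drinfeld cocycle $\chi$ on $H$ would have to satisfy $\chi(S(\one{h}),\two{h})\chi^{-1}(S(\three{h}),\four{h})=\eps(h)$. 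But the standard fact is that $U(h):=\chi(\one{h},S(\two{h}))$ has convolution inverse $U^{-1}(h)=\chi^{-1}(S(\one{h}),\two{h})$; by uniqueness of convolution inverses your identity would force $\chi(S(\one{h}),\two{h})=\chi(\one{h},S(\two{h}))$ for all $h$, which is an extra symmetry not implied by the cocycle axioms (already visible from $\sigma(h,g)=j(\one{h})j(\one{g})j^{-1}(\two{h}\two{g})$: one expression collapses to $j(\one{h})j(S(\two{h}))$, the other does not collapse at all). Associative type lets you commute $\sigma$ past \emph{acted-on elements of $B$}; it does not convert $\sigma(S(\cdot),\cdot)$ into $\sigma(\cdot,S(\cdot))$.

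The correct inverse, as in the paper, is
\begin{align*}
\phi^{-1}(b\ot b'\#_{\sigma}h)=b\ot b'\bigl(S(\two{h})\la\sigma(\three{h},S(\four{h}))\bigr)\#\,\one{h},
\end{align*}
i.e.\ with the arguments in the $\sigma(\cdot,S(\cdot))$ order \emph{and} with the action of $S(\two{h})$ inserted. The two composites then reduce to the identities $\sigma^{-1}(S(\two{h}),\three{h})\,\bigl(S(\one{h})\la\sigma(\four{h},S(\five{h}))\bigr)=\eps(h)1$ and $\bigl(\one{h}\la\sigma^{-1}(S(\four{h}),\five{h})\bigr)\sigma(\two{h},S(\three{h}))=\eps(h)1$, which are precisely parts (6) and (4) of Lemma~\ref{lemma. unital twist} (together with the module-algebra property in the associative-type case). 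So the fix is not a reordering argument but a different candidate for $\phi^{-1}$; once you take the right candidate, the needed identities are already available and no new cancellation lemma is required.
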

\begin{proof}
First we check $\epsilon=\epsilon^{\sigma}\circ\phi$, where $\epsilon^{\sigma}$ is the counit of $B^{e}\#_{\sigma} H$.
Let $X=b\otimes b'\# h\in B^{e}\# H$, then
\begin{align*}
    \epsilon^{\sigma}(\phi(X))
    &=b(\one{h}\triangleright b')(\two{h}\triangleright \sigma^{-1}(S(\five{h}), \six{h}))\sigma(\three{h}, S(\four{h}))\\
    &=b(h\triangleright b')=\epsilon(b\otimes b'\# h),
\end{align*}
where in the 2nd step we use Proposition \ref{lemma. unital twist}.
%Here we always identify $B$ with its image in $B\#H$ and $B\#_{\sigma}H$ by $b\mapsto b\#1$ and $b\mapsto b\#_{\sigma}1$ respectively.
It is easy to see that $\phi$ is a left $B$-module map. That it is right $B$-linear is
\begin{align*}
    \phi(X.b'')=&\phi(b\otimes b' (S(\two{h})\triangleright b'')\#\one{h})\\
    =&b\otimes b' (S(\four{h})\triangleright b'')\sigma^{-1}(S(\two{h}), \three{h})\#_{\sigma} \one{h}\\
    =&b\otimes b' \sigma^{-1}(S(\three{h}), \four{h})(S(\two{h})\triangleright b'')\#_{\sigma}\one{h}\\
    =&\phi(X).b'',
\end{align*}
for all $b''\in B$, where the third step uses Lemma~\ref{lem: associative type}. Next, we have
\begin{align*}
    (\phi\otimes_{B}\phi)(\Delta(X))&=(\phi\otimes_{B}\phi)(b\otimes 1\#\one{h}\otimes_{B} 1\otimes b'\# \two{h})\\
    &=(b\otimes \sigma^{-1}(S(\two{h}), \three{h})\#_{\sigma}\one{h})\otimes_{B}(1\otimes b'\sigma^{-1}(S(\five{h}), \six{h})\#_{\sigma}\four{h})\\
    &=\Delta(b\otimes b'\sigma^{-1}(S(\two{h}), \three{h})\#_{\sigma}\one{h})\\
    &=\Delta(\phi(X)),
\end{align*}
 using the coproduct of $B^e\#_{\sigma}H$. It remains to check that $\phi$ is invertible. In fact, the inverse of $\phi$ is given by
\begin{align*}
    \phi^{-1}(b\otimes b'\#_{\sigma} h)=b\otimes b'(S(\two{h})\triangleright\sigma(\three{h}, S(\four{h})))\#\one{h}.
\end{align*}
Here
\[ \phi\phi^{-1}(b\tens b'\#_{\sigma} h)=b\tens b'\sigma^{-1}(S(\three{h}), \four{h}) S(\two{h})\la \sigma(\five{h},S(\six{h}))\#_{\sigma} \one{h}\]
and using Lemma~\ref{lemma. unital twist}(2), we have
\begin{align*}
\sigma^{-1}&(S(\two{h}),\three{h}) S(\one{h})\la \sigma(\four{h},S(\five{h}))\\
=&\sigma^{-1}(S(\four{h}), \five{h})\sigma(S(\three{h}), \six{h})\sigma(S(\two{h})\seven{h}, S(\ten{h}))\sigma^{-1}(S(\one{h}), \eight{h}S(\nine{h}))=\eps(h).
\end{align*}
In the other side, we similarly have
\[ \phi^{-1}\phi(b\tens b'\# h)=b\tens b'\sigma^{-1}(S(\five{h}),\six{h}) S(\two{h})\la \sigma(\three{h},S(\four{h}))\# \one{h}\]
and using Lemma~\ref{lemma. unital twist}(4), we have
\begin{align*}\sigma^{-1}&(S(\four{h}),\five{h}) S(\one{h})\la \sigma(\two{h},S(\three{h}))=&(S(\one{h}))\la\left((\two{h}\la\sigma^{-1}(S(\five{h}),\six{h}))\sigma(\three{h},S(\four{h}))\right)\\
=&S(\one{h})\la \eps(\two{h})1=\eps(h)
\end{align*}
where, in the associative type case, we use that $\la$ is a module algebra.  \end{proof}

\begin{thm}\label{theorem. 2-cocycle twist}
If  $B\#_{\sigma} H$ is a associative type, then there is an invertible normalised 2-cocycle $\tilde{\sigma}$ on $B^{e}\#H$, such that
$\phi: (B^{e}\#H)^{\tilde{\sigma}}\to B^{e}\#_{\sigma}H$ is an isomorphism of left $B$-bialgebroids, where $\tilde{\sigma}$ is given by Lemma~\ref{lem. 2-cocycle on bialgebroid} and $\phi$ is given by Lemma~\ref{lem. isomorphic map between bialgebroid}.
\end{thm}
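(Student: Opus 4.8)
The plan is to assemble the two preceding lemmas and then supply the single missing ingredient, namely that $\phi$ respects products. Recall that cotwisting by an invertible normalised $2$-cocycle leaves the coproduct, counit, source and target unchanged and only deforms the product to (\ref{twistprod}); since Lemma~\ref{lem. 2-cocycle on bialgebroid} shows that $\tilde{\sigma}$ is such a cocycle, the cotwist $(B^{e}\#H)^{\tilde{\sigma}}$ is a left $B$-bialgebroid by the cotwist construction recalled before Theorem~\ref{thm. 2 cocycle twist}, and its coring structure is literally that of $B^{e}\#H$. Lemma~\ref{lem. isomorphic map between bialgebroid} already provides that $\phi$ is an invertible $B$-coring map onto $B^{e}\#_{\sigma}H$; because the coring data of $(B^{e}\#H)^{\tilde{\sigma}}$ and of $B^{e}\#H$ coincide, $\phi$ is at once an invertible coring map out of $(B^{e}\#H)^{\tilde{\sigma}}$. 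Hence it remains only to verify that $\phi$ intertwines the twisted product of the source with the product of $B^{e}\#_{\sigma}H$, together with compatibility with $s$ and $t$.

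The source/target compatibility is immediate. Evaluating $\phi(1\otimes1\#1)=1\otimes1\#_{\sigma}1$ using $\sigma^{-1}(S(1),1)=1$ from the normalisation of $\sigma$ (Lemma~\ref{lemma. unital twist} and condition (3) on $\sigma$), we get $\phi(1_{\cL})=1_{\cL}$, and then $\phi(s(b))=\phi(b.1_{\cL})=b.\phi(1_{\cL})=s(b)$ and likewise $\phi(t(b))=t(b)$, using that $\phi$ is a $B$-bimodule map. So once the algebra-map property is in hand, $\phi$ is a genuine morphism of left $B$-bialgebroids, and being invertible it is an isomorphism.

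For the main step I would compute $\phi(X\cdot_{\tilde{\sigma}}Y)$ and $\phi(X)\phi(Y)$ directly for $X=b\otimes b'\#h$ and $Y=c\otimes c'\#g$. Using the iterated coproduct of $B^{e}\#H$, namely $\one{X}\otimes_{B}\two{X}\otimes_{B}\three{X}=(b\otimes1\#\one{h})\otimes_{B}(1\otimes1\#\two{h})\otimes_{B}(1\otimes b'\#\three{h})$ and similarly for $Y$, I substitute into (\ref{twistprod}) and into the explicit formulas for $\tilde{\sigma},\tilde{\sigma}^{-1}$ of Lemma~\ref{lem. 2-cocycle on bialgebroid}, and then apply $\phi$; the right-hand side is expanded with the product of $B^{e}\#_{\sigma}H$ from Proposition~\ref{prop cocy Hopf algebroid}. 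Both sides should reduce to a common expression by repeated use of the $2$-cocycle identities and their inverses in Lemma~\ref{lemma. unital twist}, the associative-type identity (\ref{eqn. assoc type}), and Lemma~\ref{lem: associative type} to commute the factors $\sigma^{\pm1}(S(\cdot),\cdot)$ past the antipode-twisted actions $S(\cdot)\la(\cdot)$.

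The hard part will be the bookkeeping of the nested Sweedler indices: each of the three coproduct copies of $X$ and $Y$ carries an $H$-leg that is split again by $\tilde{\sigma}$ and by $\phi$, so the intermediate expressions are of high coproduct order. The art is to invoke associative type at precisely the moment where a cocycle value $\sigma(\cdots)$ must be moved across an action $(\cdots)\la(\cdots)$, after which the telescoping cancellations of Lemma~\ref{lemma. unital twist}(4),(6) collapse the antipode loops and the surviving gauge factors $\sigma^{-1}(S(\cdot),\cdot)$ reassemble into the image of $\phi$. Since $\phi$ is invertible and all remaining structure is already matched, one may equivalently check that the explicit $\phi^{-1}$ of Lemma~\ref{lem. isomorphic map between bialgebroid} is multiplicative, whichever calculation is shorter; either verification establishes that $\phi$ is an isomorphism of left $B$-bialgebroids.
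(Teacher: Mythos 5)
Your overall strategy coincides with the paper's: combine Lemma~\ref{lem. 2-cocycle on bialgebroid} and Lemma~\ref{lem. isomorphic map between bialgebroid}, note that cotwisting leaves the coring data (and $s$, $t$, $\epsilon$, $\Delta$) untouched so that $\phi$ is automatically an invertible $B$-coring map out of $(B^{e}\#H)^{\tilde{\sigma}}$, and reduce the theorem to the multiplicativity of $\phi$. Your observations about the unit and the source/target maps are correct. The difficulty is that the multiplicativity of $\phi$ is the entire content of the theorem beyond the two lemmas, and you have not verified it: you write that both sides ``should reduce to a common expression by repeated use of'' the cocycle identities and associative type. That is a plan, not a proof, and it is precisely at this step that the associative-type hypothesis is consumed.

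Concretely, after expanding $\phi(X\cdot_{\tilde{\sigma}}Y)$ and $\phi(X)\phi(Y)$ for $X=b\otimes b'\# h$ and $Y=c\otimes c'\# g$, the common prefactor $b(\one{h}\triangleright c)\sigma(\two{h},\one{g})$ and the common $H$-leg cancel, and the claim reduces to the single identity
\begin{align*}
    (S(\three{g})S(\three{h}))&\triangleright\big(\sigma^{-1}(\four{h}, \four{g})((\five{h}\five{g})\triangleright c')(\six{h}\triangleright b') \big)\,\sigma^{-1}(S(\one{g})S(\one{h}), \two{h}\two{g})\\
    &=c'\sigma^{-1}(S(\three{g}), \four{g})\big(S(\two{g})\triangleright(b'\sigma^{-1}(S(\two{h}), \three{h}))\big)\sigma(S(\one{g}), S(\one{h})),
\end{align*}
which is where all the work lies: one needs the associative-type property (\ref{eqn. assoc type}) to pull $(\four{h}\four{g})\triangleright c'$ out past $\sigma^{-1}$, Lemma~\ref{lem: associative type} to commute $\sigma^{-1}(S(\cdot)S(\cdot),\cdot)$ across the outer action, Lemma~\ref{lemma. unital twist}(5) to expand $\sigma^{-1}(S(\two{g})S(\two{h}),\three{h}\three{g})$ into its three-factor form, and then Lemma~\ref{lemma. unital twist}(1)--(3) to telescope the resulting nested actions of $S(\cdot)$. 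None of these steps is automatic, the identity fails without the associative-type hypothesis, and the bookkeeping you flag as ``the hard part'' is exactly what a proof must supply. Until this chain (or the equivalent check that $\phi^{-1}$ is multiplicative) is actually carried out, the argument is incomplete.
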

\begin{proof}
Since $\phi$ is a coring map, so we only need to show that $\phi$ is an algebra map. Let $X=b\otimes b'\# h$, $Y=c\otimes c'\# g\in B^{e}\#H$. On the one hand,
\begin{align*}
    \phi&(X\cdot_{\tilde{\sigma}}Y)\\
    &=\phi\Big(\tilde{\sigma}(b\otimes 1\#\one{h}, c\otimes 1\#\one{g})\otimes \big((S(\three{g})S(\three{h}))\triangleright \tilde{\sigma}^{-1}(1\otimes b'\#\four{h}, 1\otimes c'\#\four{g})\big)\#\two{h}\two{g}\Big)\\
    &=\phi\Big(b(\one{h}\triangleright c)\sigma(\two{h}, \one{g})\otimes(S(\three{g})S(\four{h}))\triangleright\big(\sigma^{-1}(\five{h}, \four{g})((\six{h}\five{g})\triangleright c')(\seven{h}\triangleright b')) \big)\#\three{h}\two{g}\Big)\\
    &=b(\one{h}\triangleright c)\sigma(\two{h}, \one{g})\otimes(S(\five{g})S(\six{h}))\triangleright\big(\sigma^{-1}(\seven{h}, \six{g})((\eight{h}\seven{g})\triangleright c')(\nine{h}\triangleright b')) \big)\\
    &\quad \sigma^{-1}(S(\three{g})S(\four{h}), \five{h}\four{g}))\#_{\sigma}\three{h}\two{g}.
\end{align*}
On the other hand,
\begin{align*}
 \phi(X)\phi(Y)
    &=(b\otimes b'\sigma^{-1}(S(\two{h}), \three{h})\#_{\sigma}\one{h})(c\otimes c'\sigma^{-1}(S(\two{g}), \three{g})\#_{\sigma}\one{g})\\
    &=b(\one{h}\triangleright c)\sigma(\two{h}, \one{g})\otimes\\
    &\quad c'\sigma^{-1}(S(\five{g}), \six{g})\big(S(\four{g})\triangleright(b'\sigma^{-1}(S(\five{h}), \six{h}))\big)\sigma(S(\three{g}), S(\four{h}))\#_{\sigma}\three{h}\two{g},
\end{align*}
Comparing the results on both sides, it is enough to show that
\begin{align*}
    (S(\three{g})S(\three{h}))&\triangleright\big(\sigma^{-1}(\four{h}, \four{g})((\five{h}\five{g})\triangleright c')(\six{h}\triangleright b')) \big)\sigma^{-1}(S(\one{g})S(\one{h}), \two{h}\two{g}))\\
    =&c'\sigma^{-1}(S(\three{g}), \four{g})\big(S(\two{g})\triangleright(b'\sigma^{-1}(S(\two{h}), \three{h}))\big)\sigma(S(\one{g}), S(\one{h})).
\end{align*}
Indeed,
\begin{align*}
    (S(\three{g})&S(\three{h}))\triangleright\big(\sigma^{-1}(\four{h}, \four{g})((\five{h}\five{g})\triangleright c')(\six{h}\triangleright b') \big)\sigma^{-1}(S(\one{g})S(\one{h}), \two{h}\two{g}))\\
    =&(S(\three{g})S(\three{h}))\triangleright\big(((\four{h}\four{g})\triangleright c')\sigma^{-1}(\five{h}, \five{g})(\six{h}\triangleright b') \big)\sigma^{-1}(S(\one{g})S(\one{h}), \two{h}\two{g}))\\
    =&c'(S(\three{g})S(\three{h}))\triangleright\big(\sigma^{-1}(\four{h}, \four{g})(\five{h}\triangleright b') \big)\sigma^{-1}(S(\one{g})S(\one{h}), \two{h}\two{g}))\\
    =&c'\sigma^{-1}(S(\two{g})S(\two{h}), \three{h}\three{g}))(S(\one{g})S(\one{h}))\triangleright\big(\sigma^{-1}(\four{h}, \four{g})(\five{h}\triangleright b') \big)\\
    =&c'\sigma^{-1}(S(\four{g}),\five{g})(S(\three{g})\triangleright \sigma^{-1}(S(\three{h}), \four{h}\six{g}))\\
    &\quad \sigma(S(\two{g}), S(\two{h}))(S(\one{g})S(\one{h}))\triangleright\big(\sigma^{-1}(\five{h}, \seven{g})(\six{h}\triangleright b') \big)\\
    =&c'\sigma^{-1}(S(\four{g}),\five{g})(S(\three{g})\triangleright \sigma^{-1}(S(\three{h}), \four{h}\six{g}))\\
    &(S(\two{g})S(\two{h}))\triangleright\big(\sigma^{-1}(\five{h}, \seven{g})(\six{h}\triangleright b') \big)\sigma(S(\one{g}), S(\one{h}))\\
    =&c'\sigma^{-1}(S(\three{g}),\four{g})S(\two{g})\triangleright \Big(\sigma^{-1}(S(\three{h}), \four{h}\five{g})S(\two{h})\triangleright\big(\sigma^{-1}(\five{h}, \six{g})(\six{h}\triangleright b') \big)\Big)\\
    &\quad \sigma(S(\one{g}), S(\one{h}))\\
    =&c'\sigma^{-1}(S(\three{g}),\four{g})S(\two{g})\triangleright \Big(\sigma^{-1}(S(\six{h}), \seven{h}\five{g})\sigma(S(\five{h}), \eight{h}\six{g})\sigma^{-1}(S(\four{h}) \nine{h},\seven{g})\\
    &\quad \sigma^{-1}(S(\three{h}),\ten{h})(S(\two{h})\eleven{h})\triangleright b')\Big)\sigma(S(\one{g}), S(\one{h}))\\
    =&c'\sigma^{-1}(S(\three{g}),\four{g})S(\two{g})\triangleright \Big(\sigma^{-1}(S(\three{h}),\four{h})S(\two{h})\triangleright(\five{h}\triangleright b')\Big)\sigma(S(\one{g}), S(\one{h}))\\
    =&c'\sigma^{-1}(S(\three{g}),\four{g})S(\two{g})\triangleright \Big(S(\four{h})\triangleright(\five{h}\triangleright b')\sigma^{-1}(S(\two{h}),\three{h})\Big)\sigma(S(\one{g}), S(\one{h}))\\
    =&c'\sigma^{-1}(S(\three{g}),\four{g})S(\two{g})\triangleright \Big(b'\sigma^{-1}(S(\two{h}),\three{h})\Big)\sigma(S(\one{g}), S(\one{h})),
\end{align*}
where the 1st, 5th steps use the property of associative type. The 3rd, 9th steps use Lemma~\ref{lem: associative type}. The 4th step uses Proposition \ref{lemma. unital twist}. Hence, $\phi(X\cdot_{\tilde{\sigma}}Y)=\phi(X)\phi(Y)$.
\end{proof}

The simplest case of a Galois object (see Section~\ref{sec5}) was already shown in \cite{schau}.  By Proposition~\ref{prop cocy Hopf algebroid} and Theorem \ref{theorem. 2-cocycle twist}, we have the following corollary

\begin{cor}\label{twistS}
If  $B\#_{\sigma} H$ is of associative type, then $(B^{e}\# H)^{\tilde{\sigma}}$ is a left Hopf algebroid with left antipode and its inverse given by
\begin{align*}
    S^{\tilde{\sigma}}(X)&:=\phi^{-1}\circ \Ss \circ \phi(X)
    =b'\sigma^{-1}(S(\three{h}), \four{h})\ot S^{2}(\one{h})\triangleright b\#S(\two{h})\\
    (S^{\tilde{\sigma}})^{-1}(X)&:=\phi^{-1}\circ \Ss {}^{-1}\circ \phi(X)
    =\five{h}\triangleright b'\ot b\three{h}\triangleright\sigma(S^{-1}(\two{h}), \one{h})\#S^{-1}(\four{h})
\end{align*}
as corresponding via $\phi$ to $S$ on $B^e\#_\sigma H$ in Proposition~\ref{prop. antipode of cleft extension}. \end{cor}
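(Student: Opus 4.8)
The plan is to produce $S^{\tilde{\sigma}}$ by transporting the antipode $\Ss$ of $B^{e}\#_\sigma H$ constructed in Proposition~\ref{prop. antipode of cleft extension} along the bialgebroid isomorphism $\phi$ of Theorem~\ref{theorem. 2-cocycle twist}, and then to extract the closed formulas by a direct calculation. The conceptual input is a general naturality statement: if $\psi:\CL\to\CL'$ is an isomorphism of left $B$-bialgebroids and $T$ is a left antipode for $\CL'$ in the sense of Lemma~\ref{lemS}, then $\psi^{-1}\circ T\circ\psi$ is a left antipode for $\CL$, with inverse $\psi^{-1}\circ T^{-1}\circ\psi$. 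I would verify this by checking that the three conditions of Lemma~\ref{lemS} transport. Condition~(1) is immediate from $\psi s=s'$, $\psi t=t'$ together with $\psi,\psi^{-1}$ being algebra maps. For conditions~(2) and~(3) I would apply $\psi\ot_B\psi$ to the left-hand side, use that $\psi$ is a coring map so that $\Delta'\circ\psi=(\psi\ot_B\psi)\circ\Delta$ and that $\psi$ is multiplicative, thereby reducing the identity for $\psi^{-1}T\psi$ evaluated on $X$ to the corresponding identity for $T$ evaluated on $Z:=\psi(X)$; injectivity of $\psi\ot_B\psi$ then yields the claim.

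Applying this with $\psi=\phi$ from Theorem~\ref{theorem. 2-cocycle twist} and $T=\Ss$ from Proposition~\ref{prop. antipode of cleft extension} shows at once that $S^{\tilde{\sigma}}:=\phi^{-1}\circ \Ss\circ\phi$ is a left antipode for $(B^{e}\#H)^{\tilde{\sigma}}$, with $(S^{\tilde{\sigma}})^{-1}=\phi^{-1}\circ \Ss^{-1}\circ\phi$; in particular $(B^{e}\#H)^{\tilde{\sigma}}$ is a left Hopf algebroid, consistently with Theorem~\ref{thm. 2 cocycle twist} applied to the Hopf algebroid $B^{e}\#H$ of Proposition~\ref{prop cocy Hopf algebroid}. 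It then remains only to identify $\phi^{-1}\circ \Ss\circ\phi$ with the stated expression.

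For the explicit formula I would substitute the formulas for $\phi$, $\Ss$ and $\phi^{-1}$ from Lemma~\ref{lem. isomorphic map between bialgebroid} and Proposition~\ref{prop. antipode of cleft extension} and compose. Rather than invert $\phi$ directly, it is cleaner to verify the equivalent identity $\phi\circ S^{\tilde{\sigma}}=\Ss\circ\phi$ on a general element $b\ot b'\# h$, where $S^{\tilde{\sigma}}$ is the candidate $b'\sigma^{-1}(S(\three{h}),\four{h})\ot S^{2}(\one{h})\triangleright b\# S(\two{h})$; a parallel check via $\phi\circ (S^{\tilde{\sigma}})^{-1}=\Ss^{-1}\circ\phi$ handles the inverse. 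Expanding both sides produces strings of $\sigma$, $\sigma^{-1}$ and $\triangleright$ in high-order Sweedler legs of $h$, and the whole point is that these collapse: the $\sigma$--$\sigma^{-1}$ pairs arising from $\phi$ and from $\Ss$ are removed using the identities of Lemma~\ref{lemma. unital twist} (in particular parts~(2),(4),(6)), while the reorderings of $\triangleright$ past $\sigma^{-1}(S(-),-)$ needed to match the two sides are exactly Lemma~\ref{lem: associative type}, which is where the associative-type hypothesis enters. I expect this bookkeeping — tracking the many coproduct legs of $h$ and applying the correct lemma at each cancellation — to be the only real obstacle; the argument is otherwise entirely mechanical once the naturality reduction has been made, and requires no structural input beyond the two cited lemmas.
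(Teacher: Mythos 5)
Your proposal is correct and follows the same route as the paper, which derives the corollary precisely by transporting the antipode of Proposition~\ref{prop. antipode of cleft extension} along the bialgebroid isomorphism $\phi$ of Theorem~\ref{theorem. 2-cocycle twist}; your explicit naturality lemma for left antipodes under bialgebroid isomorphisms just makes precise what the paper leaves implicit. The remaining identification of the closed formulas is, as you say, a mechanical Sweedler computation using Lemmas~\ref{lemma. unital twist} and~\ref{lem: associative type}.
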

Moreover, computing $(\lambda^{\tilde\sigma})^{-1}(X\tens_B 1)$ from Theorem~\ref{thm. 2 cocycle twist}, we have
\begin{align*}
   X_{+}^{\tilde{\sigma}}\ot_{B^{op}} X_{-}^{\tilde{\sigma}}=&t(\tilde{\sigma}(\two{S^{-1}(\four{S(\one{X})})}, \three{S(\one{X})})) \one{S^{-1}(\four{S(\one{X})})}\\ & \quad
   \ot s((\tilde{\sigma})^{-1}(\one{S(\one{X})}, \two{X}))\two{S(\one{X})},
\end{align*}
where $S$ just above is the untwisted antipode.  Using this, we compute
\begin{align}
  t(\eps(S^{\tilde{\sigma}}(X_{+}^{\tilde{\sigma}})))X_{-}^{\tilde{\sigma}}= S^{\tilde{\sigma}}(X),
\end{align}
where we use Lemma~\ref{lemS} to recognise $S^{\tilde\sigma}$ in the above corollary. This suggests that there should be a general theory of twistings of antipodes with $\eps_R=\eps\circ S^{\tilde\sigma}$ constructed independently by twisting and in the spirit of the left-right antipode theory in \cite{Boehm}. This will be looked at elsewhere.

\begin{exa}\label{exa. inner action} Following on from Example \ref{ex inner}, we start with a smash product with inner action given by $u$ and no cocycle, and we saw that this is cohomologous to a trivial action and $\sigma^u$. By Lemma~\ref{lem. 2-cocycle on bialgebroid}, the corresponding 2-cocycle (which cotwists the left Hopf algebroid $B^e\# H$) is \begin{align*}
    \tilde{\sigma^{u}}(b\ot b'\#_{\sigma^{u}}h, c\ot c'\#_{\sigma^{u}}g)&=bc\sigma^{u}(h, g)c'b'=bcu^{-1}(\one{g})u^{-1}(\one{h})u(\two{h}\two{g})c'b'\\
    (\tilde{\sigma^{u}})^{-1}(b\ot b'\#_{\sigma^{u}}h, c\ot c'\#_{\sigma^{u}}g)&=bc(\sigma^{u})^{-1}(h, g)c'b'=bcu^{-1}(\one{h}\one{g})u(\two{h})h(\two{g})c'b'
\end{align*}
and the resulting Hopf algebroid $(B^e\# H)^{\tilde{\sigma^u}}$ has, by  Corollary \ref{twistS},
\begin{align*}
   S^{\tilde{\sigma^{u}}}(b\ot b'\#h)&=b'u(S(\two{h}))u(\three{h})\ot b\# S(\one{h})\\
   (S^{\tilde{\sigma^{u}}})^{-1}(b\ot b'\#h)&=b'\ot u^{-1}(\one{h})u^{-1}(S^{-1}(\two{h}))b\# S^{-1}(\three{h}).
\end{align*}
Moreover, this Hopf algebroid is isomorphic to $B^e\#_{\sigma^u}H$, which by Proposition \ref{prop. antipode of cleft extension}, has antipode  and its inverse
\begin{align*}
    S(b\ot b'\#_{\sigma^{u}}h)=&b'\ot (\sigma^{u})^{-1}(S^{2}(\two{h}), S(\one{h}))b\#_{\sigma^{u}}S(\three{h})\\
    =&b'\ot u(S^{2}(\two{h}))u(S(\one{h}))b\#_{\sigma^{u}}S(\three{h})\\
    S^{-1}(b\ot b'\#_{\sigma^{u}}h)=&b'\sigma^{u}(\two{h}, S(\three{h}))\ot b\#_{\sigma^{u}}S^{-1}(\one{h})\\
    =&b' u^{-1}(S(\three{h}))u^{-1}(\two{h})\ot b\#_{\sigma^{u}}S^{-1}(\one{h}).
\end{align*}
If $u$ is an algebra map then $\sigma^u(h,g)=\eps(h)\eps(g)$ and the antipode is just the interchange of $b$ and $b'$ together with $S$ on $H$.
\end{exa}

\section{Results for general Hopf Galois extensions}\label{sec4}

Here, we extend some of our results on cocycle extensions to a more general context of Hopf Galois extensions.

\subsection{Preliminaries on Hopf Galois extensions}\label{sec:chge}

A \textup{Hopf-Galois extension} or quantum principal bundle with universal calculus means an $H$-comodule algebra $P$ with coinvariant subalgebra $B:=\big\{b\in P ~|~ \Delta_R (b) = b \ot 1_H \big\} \subseteq P$ such that the \textup{canonical map}
\begin{align*}
    \can: P\otimes_{B}P\to P\otimes H, \quad p\otimes_{B}q\mapsto p\zero{q}\otimes \one{q}
\end{align*}
is bijective, where $\otimes_{B}$ is the balanced tensor product by $B\subseteq P$ as a subalgebra. We also require for convenience that $P$ is a faithfully flat left $B$-module. In fact the inverse of $\can$ is determined by its restriction, the \textit{translation map},
\begin{align*}
    \tau:=\can^{-1}|_{1\otimes H}:  H\to P\otimes_{B}P,\qquad h\mapsto \tuno{h}\otimes_{B}\tdue{h}
\end{align*}
and it is known, e.g. \cite[Prop. 3.6]{brz-tr}\cite[Lemma 34.4]{BW} that it obeys
\begin{align}\label{equ. translation map 1}
  \tuno{h} \ot_B \zero{\tdue{h}} \ot \one{\tdue{h}} &= \,\tuno{\one{h}} \ot_B \tdue{\one{h}} \ot
\two{h},\\
\label{equ. translation map 2}
~~ \tuno{\two{h}}  \ot_B \tdue{\two{h}} \ot S(\one{h}) &= \zero{\tuno{h}} \ot_B {\tdue{h}}  \ot \one{\tuno{h}},\\
\label{equ. translation map 3}
\tuno{h}\zero{\tdue{h}}\ot \one{\tdue{h}} &= 1_{P} \ot h,\\
\label{equ. translation map 4}
    \zero{p}\tuno{\one{p}}\ot_{B}\tdue{\one{p}} &= 1_{P} \ot_{B}p
\end{align}
for all $h\in H$ and $p\in P$.

Finally, a Hopf-Galois extension is {\em cleft} if there is a {\em cleaving map} $j:H\to P$ which is convolution-invertible and $H$-equivariant.  This is known cf \cite[Theorem 8.2.4]{mont} to be equivalent to requiring that $P \simeq B\otimes H$ as left $B$-modules and right $H$-comodules (i.e. that it has the `normal basis property'). This means that a cleft Hopf Galois extension is equivalent to a cocycle cross
product. The converse is also well known but it is useful to recap the proof.
\begin{lem}
$B\#_{\sigma} H$ for a cocycle $(\sigma, \triangleright)$ is a cleft Hopf-Galois extension. Hence the two notions
are equivalent.
\end{lem}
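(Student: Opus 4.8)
The plan is to exhibit the obvious candidate cleaving map and verify the three defining requirements of a cleft extension (equivariance, unitality, convolution-invertibility), after which Hopf--Galois-ness is the standard equivalence already recalled just above this lemma. First I would fix the comodule structure: $B\#_{\sigma}H$ carries the right coaction $\Delta_R(b\#_{\sigma}h)=b\#_{\sigma}\one{h}\otimes\two{h}$, i.e.\ $\id_B\otimes\Delta_H$ on the $H$-leg. Since the measuring $\triangleright$ and the cocycle $\sigma$ take values in $B$, which is coinvariant, $\Delta_R$ only `sees' the outer $H$-factor $\three{h}\two{g}$ of the product \eqref{equation. product of twisted crossed product}, so checking that $\Delta_R$ is an algebra map is routine. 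Because $H^{co(H)}=k1$, the coinvariants are $B\#_{\sigma}1$, and the normalisations $1\triangleright c=c$ and $\sigma(\cdot,1)=\sigma(1,\cdot)=\eps$ make $b\mapsto b\#_{\sigma}1$ an algebra inclusion identifying $B=(B\#_{\sigma}H)^{co(H)}$.

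Next I would take the cleaving map $j(h)=1\#_{\sigma}h$. It is unital ($j(1)=1\#_{\sigma}1$) and a right $H$-comodule map, since $\Delta_R j(h)=1\#_{\sigma}\one{h}\otimes\two{h}=(j\otimes\id)\Delta h$. The crux is convolution-invertibility, and I would propose the inverse
\[ j^{-1}(h)=\sigma^{-1}(S(\two{h}),\three{h})\#_{\sigma}S(\one{h}),\]
which is forced by the coequivariance $\zero{j^{-1}(h)}\otimes\one{j^{-1}(h)}=j^{-1}(\two{h})\otimes S(\one{h})$ already used in the proof of Proposition~\ref{prop. twist cleft Galois object}. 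Computing $j\star j^{-1}$ from \eqref{equation. product of twisted crossed product}, the two inner antipode legs become adjacent in the $H$-slot and collapse via $\three{h}S(\four{h})=\eps$, reducing the $B$-coefficient exactly to $(\one{h}\triangleright\sigma^{-1}(S(\four{h}),\five{h}))\sigma(\two{h},S(\three{h}))=\eps(h)1$, which is Lemma~\ref{lemma. unital twist}(4); hence $j\star j^{-1}=\eps(\cdot)1$.

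The opposite identity $j^{-1}\star j=\eps(\cdot)1$ is the delicate direction and is the step I expect to be the main obstacle. Here the same computation leaves the antipode legs \emph{non}-adjacent in the $H$-slot (a factor of the shape $S(\one{h})\cdots\six{h}$ separated by the cocycle terms), so no immediate telescoping occurs; the cancellation instead has to be extracted from Lemma~\ref{lemma. unital twist}(6) together with the 2-cocycle conditions (1)--(4) of Section~\ref{seccocy}, exactly in the spirit of the harder half of Lemma~\ref{lem. isomorphic map between bialgebroid}. This is precisely the standard convolution-invertibility of the cleaving map of a crossed product, cf.\ \cite[Prop.~7.2.7]{mont}, and I would either reproduce that Sweedler bookkeeping in full or simply quote it.

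Finally I would conclude. Convolution-invertibility together with equivariance and unitality show that $B\#_{\sigma}H$ is cleft, and the recalled equivalence (cleft $\Leftrightarrow$ normal basis property $\Leftrightarrow$ crossed product, cf.\ \cite[Theorem 8.2.4]{mont}) gives that it is Hopf--Galois. For a self-contained conclusion I would instead exhibit the translation map $\tau(h)=\tuno{h}\otimes_B\tdue{h}:=j^{-1}(\one{h})\otimes_B j(\two{h})$ and the inverse canonical map $\can^{-1}(p\otimes h)=p\,j^{-1}(\one{h})\otimes_B j(\two{h})$: then $\can(\can^{-1}(p\otimes h))=p\,j^{-1}(\one{h})\,\zero{j(\two{h})}\otimes\one{j(\two{h})}=p\,(j^{-1}\star j)(\one{h})\otimes\two{h}=p\otimes h$ by equivariance of $j$ and $j^{-1}\star j=\eps$, recovering \eqref{equ. translation map 3}, while $\can^{-1}\circ\can=\id$ follows symmetrically from $j\star j^{-1}=\eps$ and \eqref{equ. translation map 4}. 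Since $P\cong B\otimes H$ is free as a left $B$-module, the faithful flatness assumption holds automatically in this cleft case.
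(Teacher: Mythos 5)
Your proposal is correct and takes essentially the same route as the paper: the paper writes down exactly your $\can^{-1}(p\otimes h)=p\,j^{-1}(\one{h})\otimes_B j(\two{h})$ with $j^{-1}(h)=\sigma^{-1}(S(\two{h}),\three{h})\#_{\sigma}S(\one{h})$ and verifies directly that it is a two-sided inverse of $\can$, the one nontrivial cancellation being Lemma~\ref{lemma. unital twist}(4), just as you identify. The only miscalibration is that you flag $j^{-1}\star j$ as the delicate direction, whereas it collapses immediately from $\sigma^{-1}\star\sigma=\eps$ evaluated at $(S(\two{h}),\three{h})$ (the antipode reverses the coproduct legs, making the two $\sigma$-factors adjacent); it is $j\star j^{-1}$, equivalently $\can^{-1}\circ\can=\id$, that genuinely requires Lemma~\ref{lemma. unital twist}(4), and that half you have already carried out correctly.
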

\begin{proof} We have already seen how the data of an extension with the normal basis property implies a cocycle cross product. Conversely,  if we are given a cocycle cross product $B\#_{\sigma}H$ by data $(\la,\sigma)$ then we can view it as a cleft Hopf-Galois extension with $\Delta_R=\id\tens\Delta$ and $B\tens 1=(B\#_{\sigma}H)^{co(H)}$, $j(h)=1\tens h$ as previously discussed. This is clear but we explain it explicitly. Indeed, the inverse to the canonical
map is provided by\cite{mont}
\begin{align}\label{equ. inverse of canonical map of cleft extension}
    \can^{-1}(b\#_{\sigma}g\otimes h)=(b\#_{\sigma}g)(\sigma^{-1}(S(\two{h}), \three{h})\#_{\sigma} S(\one{h}))\otimes_{B}1\#_{\sigma}\four{h},
\end{align}
for all $b\in B$ and $g, h\in H$. One can check that
\begin{align*}
    \can\big((b\#_{\sigma}g)&(\sigma^{-1}(S(\two{h}), \three{h})\#_{\sigma}S(\one{h}))\otimes_{B}1\#_{\sigma}\four{h}\big)\\
    &=(b\#_{\sigma}g)\big(\sigma^{-1}(S(\three{h}), \four{h})\sigma(S(\two{h}), \five{h})\#_{\sigma}S(\one{h})\six{h}\big)\otimes \seven{h}=b\#_{\sigma}g\otimes h.\\
\can^{-1}(\can(b\#_{\sigma}&g\otimes_{B}b'\#_{\sigma}h))\\
&=(b\#_{\sigma}g)(b'\#_{\sigma}\one{h})(\sigma^{-1}(S(\three{h}), \four{h})\#_{\sigma}S(\two{h}))\otimes_{B} 1\#_{\sigma}\five{h}\\
&=(b\#_{\sigma}g)\big(b'(\one{h}\triangleright \sigma^{-1}(S(\six{h}), \seven{h}))\sigma(\two{h}, S(\five{h}))\#_{\sigma}\three{h}S(\four{h})\big)\otimes_{B}1\#_{\sigma}\eight{h}\\
&=(b\#_{\sigma}g)(b'\#_{\sigma}1)\otimes_{B}1\#_{\sigma}h=b\#_{\sigma}g\otimes_{B}b'\#_{\sigma}h,
\end{align*}
for all $b, b'\in B$ and $g, h\in H$, where the third step uses Proposition \ref{lemma. unital twist}.
\end{proof}

Given the lemma, our various results in Section~\ref{sec2} translate to the point of view of cleft Hopf-Galois extensions as follows, with the more elementary proofs left to the reader. First of all, our notion of associative type from the point of view of cleft Hopf-Galois extensions is equivalent to
\begin{align}\label{HopfGal assoc}
    j^{-1}(\one{g})j^{-1}(\one{h})j(\two{h}\two{g})\in Z(B).
\end{align}

Next,  `gauge transform' from $(\la,\sigma)$ to $(\la^u,\sigma^u)$ by convolution-invertible $u:H\to B$ is equivalent  from the point of view of cleft Hopf-Galois extensions to a change of the cleaving map $j$ to $j^u=u^{-1}*j$, and preserves associative type as in Proposition~\ref{prop. Has} if and only if
\begin{equation}\label{HopfGal u assoc type} j^{-1}(g\o) u(g\t)j^{-1}(h\o)u(h\t) u^{-1}(h\th g\th)j(h\fo g\fo)\in Z(B).\end{equation}

Lastly, it was recently observed \cite{ppca}  that if $\chi: H\otimes H\to k$ is an invertible normalised Drinfeld cotwist on  $H$ then a Hopf-Galois extension $B=P^{coH}\subseteq P$ cotwists to another one,  $B=P_{\chi}^{coH^{\chi}}\subseteq P_{\chi}$ with product (\ref{eqn. P cotwist}). We  make this a little more explicit:

\begin{lem}\label{lem. deformed translation map} If $P,H$ is a Hopf-Galois extension with translation map as above and $\chi$ a Drinfeld cotwist then $P_\chi,H^\chi$ has translation map
\begin{align*}
   \tau_{\chi}(h):=\tuno{\three{h}}\otimes_{B}\tdue{\three{h}}\chi(\one{h}, S(\two{h})),
\end{align*}
for all $h\in H$.
\end{lem}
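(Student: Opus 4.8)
The plan is to verify directly that the proposed map $\tau_\chi$ is the translation map of the cotwisted Hopf-Galois extension $P_\chi, H^\chi$, that is, that it equals $\can_\chi^{-1}|_{1\tens H}$ where $\can_\chi$ is the canonical map for $P_\chi$. Since the translation map is characterised as the restriction of $\can^{-1}$ to $1\tens H$, and $\can_\chi$ is built from the twisted product $\cdot_\chi$ of (\ref{eqn. P cotwist}) and the coaction of $H^\chi$ (which is unchanged on the underlying vector space), it suffices to show that $\can_\chi\circ\tau_\chi=\id$ on $1\tens H$, i.e.\ that
\[ \tuno{\three{h}}\cdot_\chi\zero{(\tdue{\three{h}})}\otimes \one{(\tdue{\three{h}})}\,\chi(\one{h}, S(\two{h})) = 1_P\otimes h\]
as elements of $P_\chi\tens H^\chi$, where the tensor product and coaction are interpreted in the cotwisted setting. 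The key inputs are the translation-map identities (\ref{equ. translation map 1})--(\ref{equ. translation map 3}), the definition of $\cdot_\chi$, and the cocycle identity for $\chi$.

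First I would unfold $\cdot_\chi$ using (\ref{eqn. P cotwist}): the product $\tuno{(\cdots)}\cdot_\chi\zero{(\tdue{(\cdots)})}$ produces a factor $\chi^{-1}$ paired against the coaction degrees of the two translation-map legs. The crucial step is to rewrite the coaction on $\tdue{h}$ appearing inside $\cdot_\chi$ using the co-associativity relation (\ref{equ. translation map 1}), which moves the $H$-degree coming from $\zero{(\tdue{h})}$ out onto an external factor of $h$; this is what lets the $\chi^{-1}$ from the twisted product recombine with the explicit $\chi(\one{h}, S(\two{h}))$ factor. After applying (\ref{equ. translation map 1}), I would collect all the $H$-arguments on which $\chi$ and $\chi^{-1}$ act and then use the 2-cocycle condition for $\chi$ together with the normalisation $\chi(1,h)=\chi(h,1)=\eps(h)$ to collapse the $\chi$-factors, finally invoking (\ref{equ. translation map 3}) to reduce $\tuno{h}\,\zero{(\tdue{h})}\otimes\one{(\tdue{h})}$ to $1_P\otimes h$.

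I would then check the reverse composition, or equivalently verify that $\tau_\chi$ lands in the correct balanced tensor product $P_\chi\tens_B P_\chi$ and satisfies the defining property of a translation map on the other side, namely that $\can_\chi(\tau_\chi(h))$ and the well-definedness over $\tens_B$ hold; here one uses (\ref{equ. translation map 2}) to control how the coaction interacts with the first leg. An alternative, possibly cleaner, route is to cite the result of \cite{ppca} that $P_\chi$ is Hopf-Galois and simply identify the inverse canonical map by the above computation, since a Hopf-Galois extension has a unique translation map.

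The main obstacle will be bookkeeping the Sweedler indices correctly: the twisted product introduces $\chi^{-1}$ on coaction legs, the coaction of $H^\chi$ must be read off as the same linear map but with the reindexed coproduct only entering through $\chi$, and the explicit $\chi(\one{h}, S(\two{h}))$ factor must be matched against the $\chi^{-1}$ generated by $\cdot_\chi$. Getting (\ref{equ. translation map 1}) applied at the right place so that the three $\chi$-type factors assemble into an instance of the cocycle identity is the delicate point; once the arguments are aligned the collapse is forced by the cocycle and normalisation axioms.
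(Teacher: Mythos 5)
Your proposal is correct and follows essentially the same route as the paper: a direct verification that $\can_\chi\circ\tau_\chi=\id$ on $1\otimes H$ (sufficient by left $P_\chi$-linearity of $\can_\chi$) followed by the reverse composition, using the translation-map identities and the collapse of $\chi(\one{h},S(\two{h}))$ against $\chi^{-1}(S(\three{h}),\four{h})$ forced by the cotwist axioms. The only small discrepancy is one of bookkeeping: the identity doing the main work in the forward composition is (\ref{equ. translation map 2}) on the first leg (turning $\one{\tuno{h}}$ into $S$ of a coproduct leg of $h$) rather than (\ref{equ. translation map 1}), and the reverse composition also needs (\ref{equ. translation map 4}).
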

\begin{proof} We give an elementary direct proof. On the one hand, \begin{align*}
    (\can_{\chi}\circ \tau_{\chi})(h)&=\zero{\tuno{\three{h}}}\zero{\tdue{\three{h}}}\otimes \chi^{-1}(\one{\tuno{\three{h}}}, \one{\one{\tdue{\three{h}}}})\two{\one{\tdue{\three{h}}}}\chi(\one{h}, S(\two{h}))\\
    &=\tuno{\four{h}}\zero{\tdue{\four{h}}}\otimes \chi^{-1}(S(\three{h}), \one{\one{\tdue{\four{h}}}})\two{\one{\tdue{\four{h}}}}\chi(\one{h}, S(\two{h}))\\
    &=1\otimes \chi^{-1}(S(\three{h}),\four{h})\five{h}\chi(\one{h}, S(\two{h}))=1\otimes h,
\end{align*}
for all $h\in H$, where the 2nd step uses (\ref{equ. translation map 2}), the 3rd step uses (\ref{equ. translation map 3}), and the last step uses the fact that $\chi$ is a cotwist on the Hopf algebra $H$. (This is sufficient since $\can_\chi$ is a left $P_\chi$-module map.) On the other side,
\begin{align*}
    (\can_{\chi}^{-1}\circ \can_{\chi})(p\otimes_{B} q)&=p\cdot_{\chi} \zero{q}\cdot_{\chi}\tuno{\three{q}}\otimes_{B}\tdue{\three{q}}\chi(\one{q}, S(\two{q}))\\
    &=p\cdot_{\chi}(\zero{q}\zero{\tuno{\four{q}}})\otimes_{B}\tdue{\four{q}}\chi^{-1}(\one{q}, \one{\tuno{\four{q}}})\chi(\two{q}, S(\three{q}))\\
    &=p\cdot_{\chi}(\zero{q}\tuno{\five{q}})\otimes_{B}\tdue{\five{q}}\chi^{-1}(\one{q}, S(\four{q}))\chi(\two{q}, S(\three{q}))\\
    &=p\cdot_{\chi}(\zero{q}\tuno{\one{q}})\otimes_{B}\tdue{\one{q}}=p\otimes_{B}q
\end{align*}
for all $p, q\in P_{\chi}$, where the 3rd step uses (\ref{equ. translation map 2})  and the last step uses (\ref{equ. translation map 4}). Now, in the cleft case, as the isomorphism $P\cong B\otimes H$ is left $B$-linear (since the coaction is the same on the underlying vector space) and right $H^\chi$-colinear (since $b\cdot_{\chi} p=bp$ as $b$ is coinvariant), the twisting reduces to Proposition~\ref{prop. twist cleft Galois object}.\end{proof}

In the cleft case, this agrees with the twisting of the cocycle extension in Proposition~\ref{prop. twist cleft Galois object}. From the point of view of Hopf-Galois extensions, the condition that  twisting by $\chi$ preserves associative type as in Proposition~\ref{prop. cocy as twist}  is
\begin{align}\label{HopfGal assoc chi}
   j^{-1}(h\o g\o) \chi(\two{h}, \two{g})j(\three{h}\three{g}) \in Z(B)\end{align}
   on using the form of the action $\la$ as given by $j$.

\subsection{Ehresmann-Schauenburg Hopf algebroid}

 The classical construction here is the Ehresmann groupoid $\mathcal G$ associated to a classical principal bundle $\pi:X\to M$ where $M$ is classical manifold, with structure group $G$. Then ${\mathcal G}=(X\times X)/G$ where we identify $(x,y)\sim (x.u,y.u)$ for all $u\in G$ and $x,y\in X$. The base is $M=X/G$ similarly identified. The composition is $(x,y)\circ  (y, z)=(x,z)$ and the source and target maps are represented by $s(x,y)=x$ and $t(x,y)=y$, which one can check are all well defined on the quotient spaces (so $s,t$ send the class of $(x,y)$  to $\pi(x)$ and $\pi(y)$ respectively).
\begin{defi}\label{def:ec}\cite[\S 34.13]{BW}.
Let $B=P^{co H}\subseteq P$ be a Hopf Galois extension such that $P$ is a faithfully flat left $B$-module. The \textup{Ehresmann Schauenburg bialgebroid} is
\begin{equation*}
\CB:=\{p\otimes q\in P\otimes P\quad|\quad \zero{p}\otimes \tau(\one{p})q=p\otimes q\otimes_B 1\}\subset P\tens P
\end{equation*}
 with $B$-bimodule inherited from $P$ and $B$-coring coproduct and counit
 \[
\Delta(p\otimes q)=\zero{p}\otimes \tau(\one{p})\otimes q,\quad  \epsilon(p\otimes q)=pq.
\]
Moreover, $\CB$ is a $B^e$-ring with the product and unit
\[ (p\otimes q)\bullet_{\mathcal{C}}(r\otimes u)=pr\otimes uq,\quad \eta(b\tens c)=b\tens c
\]
for all $p\otimes q$, $r\otimes u \in \mathcal{B}$ and $b\tens c\in B^e$. Here $s(b)=b\otimes 1$ and $t(b)=1\otimes b$.
\end{defi}

It was shown in \cite{BW,HL20} that the stated condition on $p\tens q$ (sum of terms understood) to be in $\mathcal{B}$ is equivalent to being in the space $(P\ot P)^{coH}$ defined by
\beq
(P\ot P)^{coH} := \{p\ot  q\in P\ot  P \, \quad|\quad \zero{p}\ot  \zero{q}\ot  \one{p}\one{q}=p\ot  q\ot  1_H \}. \label{ec2}
\eeq
We will prefer to use this as the characterisation of $\mathcal{B}$ in line with the classical description. Then the fact that it is true classically suggests that $\CB$ should  generally be a left Hopf algebroid. We prove this now.

\begin{thm}\label{ESHopf}
If $B\subseteq P$ is a Hopf Galois extension with Hopf algebra $H$ then $\CB$ is a Hopf algebroid which we denote $\CL(P,H)$.
\end{thm}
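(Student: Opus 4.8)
The plan is to take as given, from Schauenburg's work recalled in Definition~\ref{def:ec}, that $\CB$ is a left $B$-bialgebroid with the stated $s,t$, product, coproduct and counit; what then remains for a \emph{left} Hopf algebroid is to show that
\[ \lambda:\CB\otimes_{B^{op}}\CB\to\CB\otimes_B\CB,\qquad \lambda(X\otimes_{B^{op}}Y)=\one{X}\otimes_B\two{X}Y \]
is invertible. I would do this by writing down an explicit inverse built from the translation map $\tau(h)=\tuno{h}\otimes_B\tdue{h}$ of the extension, which is the natural replacement for the antipode when $B\ne k$ and which satisfies the four identities (\ref{equ. translation map 1})--(\ref{equ. translation map 4}).

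Concretely, for $X=p\otimes q\in\CB$ I would propose the translation structure
\[ X_{+}\otimes_{B^{op}}X_{-}:=(\zero{p}\otimes\tuno{\one{p}})\otimes_{B^{op}}(\tdue{\one{p}}\otimes q), \]
and then set $\lambda^{-1}(X\otimes_B Y):=X_{+}\otimes_{B^{op}}(X_{-}\bullet_{\mathcal{C}}Y)$, so that on $Y=r\otimes u$ one has $\lambda^{-1}(X\otimes_B (r\otimes u))=(\zero{p}\otimes\tuno{\one{p}})\otimes_{B^{op}}(\tdue{\one{p}}r\otimes uq)$. The first job is to check that each tensor factor really lies in $\CB$. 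That $\zero{p}\otimes\tuno{\one{p}}\in(P\otimes P)^{coH}$ follows from the coaction on $\tuno{}$ encoded in (\ref{equ. translation map 2}) together with the antipode identity $\one{h}S(\two{h})\otimes\three{h}=1\otimes h$; that $\tdue{\one{p}}\otimes q\in(P\otimes P)^{coH}$ follows from the coaction on $\tdue{}$ encoded in (\ref{equ. translation map 1}) together with the coinvariance of $p\otimes q$ itself.

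Next I would verify that $\lambda^{-1}$ is well defined over both balanced tensor products and is a two-sided inverse. Well-definedness over $\otimes_B$ and $\otimes_{B^{op}}$ amounts to the $s,t$-compatibilities of $X_{\pm}$ (the analogues of (\ref{equ. source and target map with lambda inv 5}) and its companions), which again reduce to (\ref{equ. translation map 1})--(\ref{equ. translation map 2}). The inversion identities are where the remaining relations do the work: computing $\lambda(X_{+}\otimes_{B^{op}}X_{-})=\one{X_{+}}\otimes_B\two{X_{+}}X_{-}$ and collapsing by (\ref{equ. translation map 3}) should give $X\otimes_B 1$, while $\lambda^{-1}(\lambda(X\otimes_{B^{op}}Y))$ collapses to $X\otimes_{B^{op}}Y$ by (\ref{equ. translation map 4}); coassociativity of the coaction and of $\Delta$ is used throughout to move the Sweedler legs into the position demanded by $\tau$.

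I expect the main obstacle to be essentially bookkeeping: tracking the $H$-coaction legs through $\tau$ so that the coinvariance conditions and the balanced-tensor relations are respected, since $\tau$ intertwines the coaction with the antipode in a mildly asymmetric way (contrast (\ref{equ. translation map 1}) with (\ref{equ. translation map 2})). No single step is conceptually hard once the formula for $X_{+}\otimes_{B^{op}}X_{-}$ is in hand; the real content is guessing that formula and then a disciplined use of (\ref{equ. translation map 1})--(\ref{equ. translation map 4}). Finally, I would remark that this general computation subsumes the verifications deferred in Proposition~\ref{prop cocy Hopf algebroid}, the cleft cocycle case being recovered by specialising to $P=B\#_\sigma H$ with its explicit translation map.
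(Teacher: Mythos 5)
Your overall strategy coincides with the paper's: take the bialgebroid structure of $\CB$ as known from Schauenburg/Brzezi\'nski--Wisbauer and exhibit an explicit $\lambda^{-1}$ built from the translation map. The gap is in the formula itself. What you propose,
\[
X_{+}\ot_{B^{op}}X_{-}=(\zero{p}\ot\tuno{\one{p}})\ot_{B^{op}}(\tdue{\one{p}}\ot q),
\]
is literally the coproduct $\Delta(p\ot q)=\zero{p}\ot\tau(\one{p})\ot q$ with $\ot_B$ replaced by $\ot_{B^{op}}$, so your $\lambda^{-1}$ is essentially $\lambda$ over again and the inversion identities fail. This is clearest in the Galois object case $B=k$, $P=H$, where $\tau(h)=S(\one{h})\ot\two{h}$ and $\CB\cong H$ via $h\mapsto \one{h}\ot S(\two{h})$: under this identification your $X_{+}\ot X_{-}$ becomes $\one{h}\ot\two{h}$ rather than the required $\one{h}\ot S(\two{h})$, and already for $H=k[x]$ with $x$ primitive one has $\lambda(\one{x}\ot\two{x})=\one{x}\ot\two{x}\three{x}=x\ot 1+2(1\ot x)\neq x\ot 1$. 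In general your deferred computation cannot be completed: one finds
\[
\lambda(X_{+}\ot_{B^{op}}X_{-})=(\zero{p}\ot\tuno{\one{p}})\ot_{B}(\tdue{\one{p}}\tdue{\two{p}}\ot q\,\tuno{\two{p}}),
\]
and no combination of (\ref{equ. translation map 1})--(\ref{equ. translation map 4}) collapses the second factor to $1\ot 1$. (There is also a secondary mismatch: $t(b)$ acting on your $X_{\pm}$ places $b$ on the left of $\tuno{\one{p}}$ and on the right of $q$, which is not the relation $\tuno{h}b\ot_B\tdue{h}=\tuno{h}\ot_B b\,\tdue{h}$ carried by $\tau$, so the required compatibility with the $\ot_{B^{op}}$ balancing fails as well.)

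The idea you are missing is where the antipode-like inversion enters. For $p\ot q\in(P\ot P)^{coH}$ the coinvariance condition yields $\zero{p}\ot\zero{q}\ot\one{q}=\zero{p}\ot q\ot S(\one{p})$, so it is the coaction leg of the \emph{second} factor that carries $S(\one{p})$. Accordingly the paper applies $\tau$ to $\one{q}$ rather than to $\one{p}$, setting
\[
\lambda^{-1}(X\ot_{B}Y)=\bigl(p\ot\tdue{\one{q}}\bigr)\ot_{B^{op}}\bigl(\zero{q}\ot\tuno{\one{q}}\bigr)Y,
\]
which in the Galois object case correctly reduces to $\one{h}\ot S(\two{h})g$. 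With this formula the verifications proceed exactly as you outline --- membership of both factors in $\CB$, balance over both tensor products, and the two inversion identities via (\ref{equ. translation map 1})--(\ref{equ. translation map 4}) --- so the one piece of real content in the proof is precisely the guess for $X_{+}\ot_{B^{op}}X_{-}$, and that is the piece your proposal gets wrong.
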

\begin{proof}
Let $X=p\ot q$, $Y=p'\ot q'$. We define $\lambda^{-1}: \CL(P, H)\ot_{B}\CL(P, H)\to \CL(P, H)\ot_{B^{op}}\CL(P, H)$ by
\begin{align*}
    \lambda^{-1}(X\ot_{B}Y):=p\ot\tdue{\one{q}}\ot_{B^{op}}(\zero{q}\ot\tuno{\one{q}})Y,
\end{align*}
which is well defined  since  firstly,
\begin{align*}
    \lambda^{-1}(t(b)X\ot_{B}Y)=&p\ot\tdue{\one{q}}\ot_{B^{op}}(\zero{q}b\ot\tuno{\one{q}})Y=\lambda^{-1}(X\ot_{B}s(b)Y).
\end{align*}
Also, for the formula itself to make sense given the output of $\tau$ is in $\tens_B$,
\begin{align*}
    p\ot b\tdue{\one{q}}\ot_{B^{op}}(\zero{q}\ot\tuno{\one{q}})Y=&(p\ot \tdue{\one{q}})t(b)\ot_{B^{op}}(\zero{q}\ot\tuno{\one{q}})Y\\
    =&p\ot \tdue{\one{q}}\ot_{B^{op}}t(b)(\zero{q}\ot\tuno{\one{q}})Y\\
    =&p\ot \tdue{\one{q}}\ot_{B^{op}}(\zero{q}\ot\tuno{\one{q}}b)Y
\end{align*}
Moreover, $(p\ot\tdue{\one{q}})\ot_{B^{op}}(\zero{q}\ot\tuno{\one{q}})\in \CL(P, H)\ot_{B^{op}}\CL(P, H)$. For example, the coaction on $P\tens P$ applied to the left factor,
\begin{align*}
    \Delta_{R}(p\ot \tdue{\one{q}})&\ot_{B^{op}}(\zero{q}\ot\tuno{\one{q}})\\
    =&(\zero{p}\ot \zero{\tdue{\one{q}}})\ot \one{p}\one{\tdue{\one{q}}}\ot_{B^{op}}(\zero{q}\ot\tuno{\one{q}})\\
    =&(\zero{p}\ot \tdue{\one{q}})\ot \one{p}\two{q}\ot_{B^{op}}(\zero{q}\ot\tuno{\one{q}})\\
    =&(p\ot \tdue{\one{q}})\ot 1\ot_{B^{op}}(\zero{q}\ot\tuno{\one{q}}).
\end{align*}
Similarly for the other factor.

It remains to check that  $\lambda^{-1}$ is indeed the inverse of $\lambda$. On the one hand,
\begin{align*}
    \lambda(\lambda^{-1}(X\ot_{B}Y))=&\lambda(p\ot\tdue{\one{q}}\ot_{B^{op}}(\zero{q}\ot\tuno{\one{q}})Y)\\
    =&\zero{a}\ot\tuno{\zero{p}}\ot_{B}(\tdue{\one{p}}\ot \tdue{\one{q}})(\zero{q}\ot\tuno{\one{q}})Y\\
    =&\zero{p}\ot\tuno{\zero{p}}\ot_{B}(\tdue{\one{p}}q\ot 1)Y\\
    =&X\ot_{B}Y.
\end{align*}
On the other hand,
\begin{align*}
    \lambda^{-1}(\lambda(X\ot_{B^{op}}Y))=&\lambda^{-1}(\zero{p}\ot\tuno{\one{p}}\ot_{B}(\tdue{\one{p}}\ot q)Y)\\
    =&\zero{p}\ot\tdue{\one{\tuno{\one{p}}}}\ot_{B^{op}}(\zero{\tuno{\one{p}}}\ot \tuno{\one{\tuno{\one{p}}}})(\tdue{\one{p}}\ot q)Y\\
    =&\zero{p}\ot \tdue{S(\one{p})}\ot_{B^{op}}(\tuno{\two{p}}\ot\tuno{S(\one{p})})(\tdue{\two{p}}\ot q)Y\\
    =&\zero{p}\ot \tdue{S(\one{p})}\ot_{B^{op}}(1\ot q\tuno{S(\one{p})})Y\\=&\zero{p}\ot q\tuno{S(\one{p})}\tdue{S(\one{p})}\ot_{B^{op}}Y\\
    =&X\ot_{B^{op}}Y.
\end{align*}

\end{proof}

This generalises \cite{schau1} for the case of a Galois object (see Section~\ref{sec5}). We also connect to Section~\ref{sec3} as follows.

\begin{lem}\label{EScleftbialg} In the cleft case, we have $\CL(B\#_{\sigma}H, H)\cong B^e\#_\sigma H$ as left Hopf algebroids.\end{lem}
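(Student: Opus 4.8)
The plan is to write down one explicit map and check that it intertwines all the bialgebroid structure maps; the left Hopf structure will then transport for free. In the cleft case $P=B\#_\sigma H$ has coaction $\Delta_R=\id\ot\Delta$, cleaving map $j(h)=1\#_\sigma h$ with convolution inverse $j^{-1}(h)=\sigma^{-1}(S\two h,\three h)\#_\sigma S\one h$, and (from \eqref{equ. inverse of canonical map of cleft extension}) translation map $\tau(h)=j^{-1}(\one h)\ot_B j(\two h)$. I would propose
\[
\Phi\colon B^e\#_\sigma H\to\CL(B\#_\sigma H,H),\qquad \Phi(b\ot b'\#_\sigma h)=(b\#_\sigma\one h)\ot(b'\#_\sigma S\two h),
\]
and first verify that the image lies in $\CB=(P\ot P)^{coH}$ of Definition~\ref{def:ec}: applying the diagonal coaction and using that $S$ is an anticoalgebra map, the $H$-component of the right-hand leg becomes $\two h\,S\three h=\eps(\cdot)1$, so the coinvariance condition holds.

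Next I would check $\Phi$ is a morphism of left $B$-bialgebroids. Matching $s$ and $t$ is immediate since $\Phi(b\ot1\#_\sigma1)=b\ot1$ and $\Phi(1\ot b'\#_\sigma1)=1\ot b'$, whence $B$-bimodule linearity is automatic. For the counit, $\eps_\CB(\Phi(X))=(b\#_\sigma\one h)(b'\#_\sigma S\two h)$ collapses, via $\three h\,S\four h=\eps$, to $b(\one h\triangleright b')\sigma(\two h,S\three h)$, exactly the counit of $B^e\#_\sigma H$. For the coproduct I would substitute the cocycle form of $\tau$ into $\Delta_\CB(\Phi X)=(\zero p\ot\tuno{\one p})\ot_B(\tdue{\one p}\ot q)$; because $j^{-1}(\two h)=\sigma^{-1}(S\three h,\four h)\#_\sigma S\two h$, the middle factor is precisely the $\sigma^{-1}$-correction appearing in the coproduct of Proposition~\ref{prop cocy Hopf algebroid}, so the two sides agree term by term with no balancing needed. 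The multiplicativity is the pleasant surprise: expanding $\Phi(X)\bullet_\C\Phi(Y)=(b\#_\sigma\one h)(c\#_\sigma\one g)\ot(c'\#_\sigma S\two g)(b'\#_\sigma S\two h)$ with the smash-product formula and comparing with $\Phi(XY)$ — where $\Phi$ applied to the product of Proposition~\ref{prop cocy Hopf algebroid} takes the coproduct of the resulting $H$-leg — the two sides coincide identically once $S$ is distributed over products and the Sweedler indices are aligned, using only associativity of $\sigma$ (no fresh instance of Lemma~\ref{lemma. unital twist}).

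Finally I would establish bijectivity. The cleft trivialisation $T\colon P\to B\ot H$, $T(p)=\zero p\,j^{-1}(\one p)\ot\two p$, is an isomorphism, and under $T\ot T$ the subspace $(P\ot P)^{coH}$ corresponds to the codiagonal coinvariants of $H\ot H$, which are spanned by $\one k\ot S\two k$; this identifies $\CB\cong B\ot B^{op}\ot H$ and exhibits $\Phi$ as a composite of bijections. Concretely the inverse is
\[
\Psi(p\ot q)=\zero p\,j^{-1}(\one p)\ot \zero q\,j^{-1}(\one q)\#_\sigma \two p\,S^{-1}(\two q),
\]
with $\Psi\circ\Phi=\id$ direct and $\Phi\circ\Psi=\id$ following from the defining coinvariance relation. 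Since being a left Hopf algebroid is the property that $\lambda$ is invertible and $\lambda$ is built solely from the bialgebroid data, a bialgebroid isomorphism between two left Hopf algebroids is automatically an isomorphism of left Hopf algebroids; by Theorem~\ref{ESHopf} this simultaneously justifies the structure of $B^e\#_\sigma H$ deferred in Proposition~\ref{prop cocy Hopf algebroid}. The main obstacle is really conceptual rather than computational: pinning down the correct $\Phi$ — in particular placing $S$ on the second leg and recognising that the coproduct's $\sigma^{-1}$-term is exactly the cocycle expression for $j^{-1}$ (hence for $\tau$) — and then arguing surjectivity onto the coinvariants via the trivialisation, rather than mere containment.
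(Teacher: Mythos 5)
Your map $\Phi$ is exactly the paper's isomorphism $\Theta$, and your verifications --- coinvariance of the image, the $B$-bimodule, coring and algebra map properties, and bijectivity via the normal-basis trivialisation identifying the codiagonal coinvariants of $H\ot H$ with the span of $\one{h}\ot S(\two{h})$ --- follow the paper's proof essentially step for step, including the remark that the left Hopf structure transports automatically along a bialgebroid isomorphism.

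The one concrete error is your displayed inverse $\Psi$. On $\Phi(b\ot b'\#_\sigma h)=(b\#_\sigma\one{h})\ot(b'\#_\sigma S(\two{h}))$ the two projections $\zero{p}\,j^{-1}(\one{p})\ot\two{p}$ and $\zero{q}\,j^{-1}(\one{q})\ot\two{q}$ return $b\ot\one{h}$ and $b'\ot S(\two{h})$ respectively, so your last factor evaluates to $\two{p}\,S^{-1}(\two{q})=\one{h}\,S^{-1}(S(\two{h}))=\one{h}\two{h}$, which is not $h$; hence $\Psi\circ\Phi\neq\id$ as written. The correct formula keeps only $\two{p}$ (equivalently inserts $\eps(\two{q})$ in place of $S^{-1}(\two{q})$), which on the coinvariant subspace --- where $\two{p}\two{q}$ already collapses to $\eps$ by the defining relation (\ref{ec2}) --- recovers $b\ot b'\#_\sigma h$. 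Since you independently establish bijectivity by the structural argument through $T\ot T$ (which is what the paper does), the proof survives, but the explicit $\Psi$ should be corrected before being relied upon.
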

\begin{proof} Using (\ref{ec2}), invariant elements of $P\tens P$ are of the form
\begin{align*}
    \CL(B\#_{\sigma} H, H)=\{b\#_{\sigma}\one{h}\otimes b'\#_{\sigma} S(\two{h})\quad|\quad  b, b'\in B, h\in H\}.
\end{align*}

We can also see the coproduct is given by
\begin{align*}
    \Delta(b\#_{\sigma}\one{h}\otimes b'\#_{\sigma} S(\two{h}))=b\#_{\sigma}\one{h}\otimes \sigma^{-1}(S(\three{h}), \four{h})\#_{\sigma} S(\two{h})\otimes_{B} 1\#_{\sigma}\five{h}\otimes b'\#_{\sigma} S(\six{h}),
\end{align*}
since from (\ref{equ. inverse of canonical map of cleft extension}) we know the translation map $\tau: H\to B\#_\sigma H\otimes_{B}B\#_\sigma H$ is
\begin{align}\label{taucocy}
    \tau(h)=\sigma^{-1}(S(\two{h}), \three{h})\#_{\sigma}S(\one{h})\otimes_{B}1\#_{\sigma}\four{h},
\end{align}
for all $h\in H$. Next,
\[  H\to H\tens H,\quad  h\mapsto \one{h}\tens S(\two{h})\]
is a vector space inclusion with image exactly the elements of the required form.  So we have an induced isomorphism of vector spaces
\[\Theta: B^e\#_\sigma H \to  \CL(B\#_{\sigma}H, H),\quad \Theta(b\ot b'\#_{\sigma} h):=b\#_{\sigma}\one{h}\otimes b'\#_{\sigma} S(\two{h}).\]
It remains to show that this preserves the bialgebroid structures.

First, it is not hard to see this is a $B$-bimodule map,
\begin{align*}
   \Theta(c. (b\ot b'\#_{\sigma} h).c')=&\Theta(cb\ot b' (S(\two{h})\triangleright c')\#_{\sigma} \one{h}) \\
   =&cb\#_{\sigma}\one{h}\otimes b'(S(\three{h})\triangleright c')\#_{\sigma} S(\two{h}).
\end{align*}
We can see also $\Theta$ is a coring map,
\begin{align*}
    (\Theta\ot_{B}\Theta)(\Delta(b\ot b'\#_{\sigma} h))=&b\#_{\sigma} \one{h}\ot \sigma^{-1}(S(\three{h}), \four{h})\#_{\sigma}S(\two{h})\ot_{B}1\ot_{\sigma}\five{h}\ot b'\#_{\sigma}S(\six{h})\\
    =&\Delta(b\#_{\sigma}\one{h}\otimes b'\#_{\sigma} S(\two{h})).
\end{align*}
Moreover, $\eps(\Theta(X))=\eps(X)$ since
\begin{align*}
    \eps(b\#_{\sigma}\one{h}\otimes b'\#_{\sigma}S(\two{h}))=b(\one{h}\triangleright b')\sigma(\two{h}, S(\three{h}))=\eps(b\ot b'\#_{\sigma} h).
\end{align*}
Finally, we show that $\Theta(XY)=\Theta(X)\Theta(Y)$ for all $X, Y\in B^{e}\#_{\sigma}H$. Indeed,
\begin{align*}
    \Theta((b\tens & b'\#_{\sigma} h).(c\tens c'\#_{\sigma} g))\\
    =&\Theta(b(\one{h}\triangleright c)\sigma(\two{h}, \one{g})\ot c' (S(\four{g})\triangleright  b')\sigma(S(\three{g}), S(\four{h}))\#_{\sigma} \three{h}\two{g})\\
    =&(b\#_{\sigma}\one{h}\otimes b'\#_{\sigma} S(\two{h}))(c\#_{\sigma}\one{g}\otimes c'\#_{\sigma} S(\two{g}))
\end{align*}
\end{proof}

We next want to know if $\CL(P,H)$ has an antipode. In view of the lemma, we already know this if $\sigma$ is of associative type. Classically, we know that the inverse of a gauge groupoid is just the flip of the ordered pair. However, when we flip a element $p\ot q\in \CL(P, H)$, the result $q\ot p$ doesn't belong to $\CL(P, H)$, since $H$ is not commutative. Rather, we need some kind of braiding to generalise the flip and we have a natural candidate if $H$ is coquasitriangular.

We recall that a Hopf algebra $H$ is a coquasitrangular if there is a convolution invertible linear map $\CR: H\ot H\to k$ which satisfies
    \begin{align*}
       & \one{g}\one{h}\CR(\two{h}\ot \two{g})=\CR(\one{h}\ot\one{g})\two{h}\two{g}\\
        \CR(h\ot gf)=&\CR(\one{h}\ot g)\CR(\two{h}\ot f),\quad \CR(hg\ot f)=\CR(h\ot \one{f})\CR(g\ot \two{f})
    \end{align*}
 for all $h, g, f\in H$. It is known\cite[Prop 2.2.4]{Ma:book} that the antipode of a coquasitrangular Hopf algebra is invertible. Now, since $P$ is a right $H$-comodule, we know\cite{Ma:book} that $\CR$ induces a braiding
 \[ \Psi(p\tens q)=\zero{q}\tens \zero{p} \CR(p\o\tens q\o)\]
 which is an $H$-comodule map and invertible, with inverse
 \[ \Psi^{-1}(p\tens q)=\zero{q}\ot \zero{p} \CR^{-1}(\one{q}\ot\one{p}).\]
We use this for the notion of `flip' needed for the antipode.

 \begin{prop}\label{prop. antipode for coquasi} If $H$ is coquasitriangular then $\CL(P, H)$ has a left antipode.
\end{prop}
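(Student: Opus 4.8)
The plan is to take as left antipode the braided flip itself, i.e. to set
\[ S(p\otimes q):=\Psi(p\otimes q)=\zero{q}\otimes \zero{p}\,\CR(\one{p}\otimes\one{q}),\qquad S^{-1}(p\otimes q):=\Psi^{-1}(p\otimes q)=\zero{q}\otimes\zero{p}\,\CR^{-1}(\one{q}\otimes\one{p}), \]
and to verify the three conditions of Lemma~\ref{lemS}. The first point to record is that $S$ is a well-defined invertible linear map on $\CL(P,H)$. Since $\Psi$ and $\Psi^{-1}$ are $H$-comodule maps on $P\otimes P$, they send the coinvariants $(P\otimes P)^{coH}$ of \eqref{ec2} to themselves, so they restrict to mutually inverse linear maps on $\CL(P,H)=\CB$. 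Thus invertibility of $S$ comes for free and I never have to exhibit $\lambda^{-1}$ by hand; the role of Lemma~\ref{lemS} is only that conditions (1)--(3) are then sufficient.

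For condition (1), I would compute directly. Writing the products as $t(b)X=p\otimes qb$ and $Xs(b)=pb\otimes q$ and using that $b\in B$ is coinvariant (so $\Delta_R(qb)=\zero{q}b\otimes\one{q}$), one gets $S(t(b)X)=\zero{q}b\otimes\zero{p}\,\CR(\one{p}\otimes\one{q})=S(X)s(b)$, and symmetrically $S^{-1}(s(b)X)=\zero{q}\otimes b\zero{p}\,\CR^{-1}(\one{q}\otimes\one{p})=S^{-1}(X)t(b)$. Both are immediate from the explicit product $pr\otimes uq$ of $\CB$ and the formulas $s(b)=b\otimes1$, $t(b)=1\otimes b$.

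The substance is in conditions (2) and (3), where the coproduct $\Delta(p\otimes q)=\zero{p}\otimes\tuno{\one{p}}\otimes_B\tdue{\one{p}}\otimes q$ must interact with $\Psi$. For condition (3) I would substitute $S(\one{X})=\Psi(\zero{p}\otimes\tuno{\one{p}})$ and coact; the coaction on the translation factor $\tuno{\one{p}}$ is then rewritten by the translation identity \eqref{equ. translation map 2}, which converts it into an antipode of $H$ and shifts the relevant leg onto $\tdue{}$. This produces several $\CR$-factors evaluated on consecutive comultiplication components, which I would collapse using the coquasitriangularity relation $\one{g}\one{h}\CR(\two{h}\otimes\two{g})=\CR(\one{h}\otimes\one{g})\two{h}\two{g}$ together with the bimultiplicativity of $\CR$; finally the coinvariance condition \eqref{ec2} for $p\otimes q$ telescopes the surviving $H$-components through the counit, yielding $1\otimes_B S(X)$. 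Condition (2) is the mirror computation: here $S^{-1}(\two{X})=\Psi^{-1}(\tdue{\one{p}}\otimes q)$ brings in a coaction on the other translation factor $\tdue{}$, handled by \eqref{equ. translation map 1}, and the $\CR^{-1}$-factors are collapsed by the same relations read for $\CR^{-1}$, giving $S^{-1}(X)\otimes_B 1$.

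The main obstacle I expect is precisely this bookkeeping: tracking the balanced tensor product $\otimes_B$ while the braiding moves coactions across the translation map, and ensuring the $\CR$ insertions land on the correct consecutive Sweedler legs so that the coquasitriangularity hexagons apply. As a consistency check, one can confirm that this antipode reproduces the explicit inverse of $\lambda$ from Theorem~\ref{ESHopf} through the formula of Lemma~\ref{lemS}: comparing $\lambda^{-1}(X\otimes_B1)=S^{-1}(\two{S(X)})\otimes_{B^{op}}\one{S(X)}$ with the value $(p\otimes\tdue{\one{q}})\otimes_{B^{op}}(\zero{q}\otimes\tuno{\one{q}})$ established there, one reads off $X_-=\one{S(X)}$ and $X_+=S^{-1}(\two{S(X)})$, and matching these pins down that the braided flip $\Psi$, rather than some variant, is the correct choice.
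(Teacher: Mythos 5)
Your proposal is correct and is essentially the paper's own argument: restrict the braiding induced by $\CR$ to the coinvariants (\ref{ec2}) and verify the conditions of Lemma~\ref{lemS} using the translation-map identities (\ref{equ. translation map 1})--(\ref{equ. translation map 2}) and the explicit product of $\CB$. The one substantive difference is the choice of convention: the paper sets $S=\Psi^{-1}$ (so $S^{-1}=\Psi$), whereas you set $S=\Psi$. Since conditions (2) and (3) of Lemma~\ref{lemS} are not symmetric in $S$ versus $S^{-1}$, this is not automatically harmless, but in fact your choice also passes both checks: it amounts to running the identical computation with the conjugate coquasitriangular structure $\bar{\CR}(h\ot g):=\CR^{-1}(g\ot h)$, whose induced braiding is exactly $\Psi^{-1}$, and the verification never uses any property of $\CR$ that distinguishes it from $\bar{\CR}$ (indeed the single $\CR$-factor just rides along as a scalar and is rewritten at the end via the coinvariance of $p\ot q$ --- the hexagon identities you propose to invoke are not actually needed, only the intertwining axiom that makes $\Psi$ a comodule map and the invertibility). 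This is consistent with the paper's own remark after the corollary that an $S$ obeying Lemma~\ref{lemS} need not be unique. The only claim you should drop is that matching $\lambda^{-1}$ from Theorem~\ref{ESHopf} ``pins down'' $\Psi$ over a variant: since $\lambda^{-1}$ is unique, both $\Psi$ and $\Psi^{-1}$ necessarily reproduce it through the formula of Lemma~\ref{lemS}, so that consistency check cannot distinguish the two choices.
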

\begin{proof} Since $\Psi$ is a morphism in the category of $H$-comodules, it and its inverse restrict to the invariant subspace of $P\tens P$, hence we can set $S=\Psi^{-1}$. We know that this is invertible and now we show that the remaining conditions of Lemma~\ref{lemS} apply.

First, we can see $S(t(b)X)=S(X)s(b)$ and $S^{-1}(s(b)X)=S^{-1}(X)t(b)$.
For condition (2) of Lemma \ref{lemS}, let $X=p\ot q$. Then
\begin{align*}
    \one{S^{-1}(\two{X})}&\ot_{B}\ot_{B}\one{S^{-1}(\two{X})}\one{X}\\
        =&\one{(\zero{q}\ot \CR(\one{\tdue{\one{p}}}\ot \one{q})\zero{\tdue{\one{p}}})}\\
        &\quad \ot_{B}\two{(\zero{q}\ot \CR(\one{\tdue{\one{p}}}\ot \one{q})\zero{\tdue{\one{p}}})}(\zero{p}\ot \tuno{\one{p}})\\
    =&(\CR(\two{p}\ot S(\three{p}))\zero{q}\ot \tuno{\one{q}})\ot_{B}(\tdue{\one{q}}\ot \tdue{\one{p}})(\zero{p}\ot \tuno{\one{p}})\\
    =&(\CR(\one{p}\ot S(\two{p}))\zero{q}\ot \tuno{\one{q}})\ot_{B}(\tdue{\one{q}}\zero{p}\ot 1)\\
    =&(\CR(\one{p}\ot S(\two{p}))\zero{q}\ot \tuno{\one{q}}\tdue{\one{q}}\zero{p})\ot_{B}1_{\C}\\
    =&(\CR(\one{p}\ot S(\two{p}))q\ot \zero{p})\ot_{B}1_{\C}\\
    =&(\CR(\one{p}\ot \one{q})\zero{q}\ot \zero{p})\ot_{B}1_{\C}\\
    =&S^{-1}(X)\ot_{B}1_{\C}.
\end{align*}
For condition (3),
\begin{align*}
    \one{S(\one{X})}&\two{X}\ot_{B}\two{S(\one{X})}\\
    =&\CR^{-1}(S(\two{p})\ot\one{p})(\zero{\tuno{\three{p}}}\ot\tuno{\one{\tuno{\three{p}}}})(\tdue{\three{p}}\ot q)\ot_{B}(\tdue{\one{\tuno{\three{p}}}}\ot\zero{p})\\
    =&\CR^{-1}(S(\two{p})\ot\one{p})(\tuno{\four{p}}\ot\tuno{S(\three{p})})(\tdue{\four{p}}\ot q)\ot_{B}(\tdue{S(\three{p})}\ot\zero{p})\\
    =&(\CR^{-1}(S(\two{p})\ot\one{p})\ot q\tuno{S(\three{p})})\ot_{B}(\tdue{S(\three{p})}\ot\zero{p})\\
    =&1_{\C}\ot_{B}(\CR^{-1}(S(\two{p})\ot \one{p})q\ot \zero{p})\\
    =&1_{\C}\ot_{B}( \CR^{-1}(\one{q}\ot\one{p})\zero{q}\ot \zero{p})\\
    =&1_{\C}\ot_{B}S(X).
\end{align*}
\end{proof}

In the case of a cleft Hopf-Galois extension, we obtain an antipode on the corresponding cocycle Hopf algebroid.

\begin{cor} If $H$ is coquasitriangular then $B^e\#_\sigma H$ has a left antipode
\[  S(b\tens b' \#_\sigma h)=u^{-1}(h\o) b'\tens  b\#_\sigma S(\two{h}),\quad S^{-1}(b\tens b' \#_\sigma h)=v(h\t) b'\tens b\#_\sigma S^{-1}(\one{h}), \]
where $v(h):=\CR(h\o\tens S(h\t))$ and $u^{-1}(h):=\CR(S^2(h\t)\tens h\o)$ are standard convolution-invertible maps related to the square of the antipode on $H$.
\end{cor}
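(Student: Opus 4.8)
The plan is to obtain the antipode by transport of structure along the isomorphism $\Theta\colon B^e\#_{\sigma}H\to \CL(B\#_{\sigma}H,H)$ of Lemma~\ref{EScleftbialg}, exploiting that $\CL(B\#_{\sigma}H,H)$ already carries the left antipode $\Psi^{-1}$ of Proposition~\ref{prop. antipode for coquasi} whenever $H$ is coquasitriangular. Concretely, I would set $\Ss:=\Theta^{-1}\circ\Psi^{-1}\circ\Theta$ and $\Ss^{-1}:=\Theta^{-1}\circ\Psi\circ\Theta$. Since $\Theta$ is an isomorphism of left bialgebroids (it intertwines $s,t,\Delta,\epsilon$ by Lemma~\ref{EScleftbialg}) and the three conditions of Lemma~\ref{lemS} are phrased solely in terms of these structure maps, they pass through the conjugation verbatim: for instance $\Ss(t(b)X)=\Theta^{-1}\Psi^{-1}(t(b)\Theta(X))=\Theta^{-1}(\Psi^{-1}(\Theta X)s(b))=\Ss(X)s(b)$, and similarly for (2),(3). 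Hence $\Ss$ is automatically a left antipode and the entire content of the corollary lies in computing the conjugate explicitly.

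For the explicit form I would first record $\Theta(b\ot b'\#_{\sigma}h)=b\#_{\sigma}\one{h}\ot b'\#_{\sigma}S(\two{h})$ and apply the coaction $\Delta_R=\id\tens\Delta$ together with the anti-coalgebra property of $S$ to read off the Sweedler legs of $p\ot q:=\Theta(b\ot b'\#_{\sigma}h)$, namely $\zero{p}=b\#_{\sigma}\one{h}$, $\one{p}=\two{h}$, $\zero{q}=b'\#_{\sigma}S(\four{h})$, $\one{q}=S(\three{h})$. Substituting into $\Psi^{-1}(p\ot q)=\zero{q}\ot\zero{p}\,\CR^{-1}(\one{q}\ot\one{p})$ gives
\[ \Psi^{-1}(\Theta(b\ot b'\#_{\sigma}h))=(b'\#_{\sigma}S(\four{h}))\ot(b\#_{\sigma}\one{h})\,\CR^{-1}(S(\three{h})\ot\two{h}),\]
while the same substitution into $\Psi$ yields the identical expression with $\CR(\two{h}\ot S(\three{h}))$ in place of the scalar. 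Applying $\Theta^{-1}$ amounts to rewriting the $H$-legs $\sum S(\four{h})\ot\one{h}\,(\mathrm{scalar})$ in the canonical image form $\sum\one{k}\ot S(\two{k})$; since $h\mapsto\one{h}\ot S(\two{h})$ has the left inverse $m\circ(\id\ot S^{-1})$ on its image and the element does lie in the image of $\Theta$, one recovers $k=S(\four{h})\,S^{-1}(\one{h})\,(\mathrm{scalar})$ with the two $B$-slots $b',b$ carried along unchanged.

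The main obstacle is the final reduction of this raw conjugate to the stated formulas, i.e. the identities in $H$
\begin{align*}
S(\four{h})\,S^{-1}(\one{h})\,\CR^{-1}(S(\three{h})\ot\two{h}) &= S(\two{h})\,u^{-1}(\one{h}),\\
S(\four{h})\,S^{-1}(\one{h})\,\CR(\two{h}\ot S(\three{h})) &= S^{-1}(\one{h})\,v(\two{h}),
\end{align*}
after which the (scalar) factor is reabsorbed into the first $B^e$-slot to give $u^{-1}(\one{h})b'$ resp. $v(\two{h})b'$, matching the corollary. This is exactly where coquasitriangularity is used beyond mere existence. I would first simplify with the standard relations $\CR^{-1}(S(x)\ot y)=\CR(x\ot y)$ and $\CR(x\ot S(y))=\CR^{-1}(x\ot y)$, and then invoke the standard fact that $u^{-1}(h)=\CR(S^2(\two{h})\ot\one{h})$ and $v(h)=\CR(\one{h}\ot S(\two{h}))$ implement $S^{\pm2}$ through the Drinfeld functional, via the convolution identity $\one{h}\,u^{-1}(\two{h})=u^{-1}(\one{h})\,S^2(\two{h})$ and its $v$-counterpart; this is what collapses the outer $S(\four{h})S^{-1}(\one{h})$ and the inner $\CR$ into a single $S^{\pm1}$ together with the named scalar. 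A sanity check in the cocommutative case with trivial $\CR$ (where $S^2=\id$, $u^{-1}=v=\epsilon$) reduces both identities to $S(\two{h})S^{-1}(\one{h})=S(h)$, confirming the bookkeeping. Everything outside this coquasitriangular $S^2$-computation is formal transport of structure, so I expect this identity to be the only genuine step.
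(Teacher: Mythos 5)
Your overall strategy is the paper's own: transport the braided-flip antipode $\Psi^{-1}$ of Proposition~\ref{prop. antipode for coquasi} through the isomorphism $\Theta$ of Lemma~\ref{EScleftbialg}, and your computation of $\Psi^{\mp1}(\Theta(b\ot b'\#_\sigma h))=(b'\#_\sigma S(\four{h}))\ot(b\#_\sigma \one{h})\cdot(\text{scalar})$ agrees with the paper. But the extraction of $\Theta^{-1}$ contains a genuine error. The map $m\circ(\id\ot S^{-1})$ is \emph{not} a left inverse of $\iota:h\mapsto \one{h}\ot S(\two{h})$: applied to $\one{k}\ot S(\two{k})$ it returns $\one{k}\two{k}$, not $k$ (for $k$ grouplike it returns $k^2$). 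The correct left inverse on the image is $\id\ot\eps$. Consequently your ``raw conjugate'' $k=S(\four{h})S^{-1}(\one{h})(\text{scalar})$ is wrong, and the two identities you propose as the crux of the proof are false; indeed your own sanity check already fails, since $S(\two{h})S^{-1}(\one{h})=S(\one{h}\two{h})\neq S(h)$ in general (take $h$ grouplike: you get $S(h)^2$).

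The repair is short. Since $\Psi^{\mp1}$ preserves $(P\ot P)^{coH}$, the conjugate lies in the image of $\Theta$, so applying $\id\ot\eps$ to the $H$-legs recovers the preimage: for $\Ss$ one gets $k=S(\three{h})\,\CR^{-1}(S(\two{h})\ot\one{h})=S(\three{h})\,\CR(S^2(\two{h})\ot\one{h})=S(\two{h})\,u^{-1}(\one{h})$, with the scalar absorbed into the first $B$-slot as $u^{-1}(\one{h})b'$, and similarly for $\Ss^{-1}$ using $v(\one{h})S(\two{h})=S^{-1}(\one{h})v(\two{h})$. This is exactly how the paper proceeds (it instead rewrites $\one{h}\,u^{-1}(\two{h})=u^{-1}(\one{h})S^2(\two{h})$ to put the expression in manifest image form $\one{k}\ot S(\two{k})$ before inverting $\Theta$ — the two routes are equivalent). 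So the framework and the identification of where coquasitriangularity enters are sound, but as written the computation would not reduce to the stated formulas.
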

\begin{proof} We use the isomorphism in Lemma~\ref{EScleftbialg} to transfer back $S,S^{-1}$ on $\CL(B\#_\sigma H,H)$ computed from  Proposition~\ref{prop. antipode for coquasi} as
\begin{align*}
    S(b\#_{\sigma}\one{h}\ot b'\#_{\sigma}S(\two{h}))=&b'\#_{\sigma}S(\four{h})\ot b\#_{\sigma}\one{h}\CR^{-1}(S(\three{h})\ot \two{h})\\
    =&b'\#_{\sigma}S(\four{h})\ot b\#_{\sigma}\one{h}\CR(S^{2}(\three{h})\ot \two{h})\\
    =&b'\#_{\sigma}S(\four{h})\ot b\#_{\sigma}S^{2}(\three{h})\CR(S^{2}(\two{h})\ot \one{h}),
\end{align*}
where the last step uses \cite[Lem. 2.2.2, Prop. 2.2.4]{Ma:book}. For the inverse, writing  $g=S^{-1}(h)$,
\begin{align*}
    S^{-1}(b\#_{\sigma}\one{h}\ot b'\#_{\sigma}S(\two{h}))=&b'\#_{\sigma}S(\four{h})\ot b\#_{\sigma}\one{h}\CR(\two{h}\ot S(\three{h}))\\
    =&b'\#_{\sigma}S^{2}(\one{g})\ot b\#_{\sigma}S(\four{g})\CR(S(\three{g})\ot S^{2}(\two{g}))\\
    =&b'\#_{\sigma}S^{2}(\one{g})\CR(\three{g}\ot S(\two{g}))\ot b\#_{\sigma}S(\four{g})\\
    =&b'\#_{\sigma}\three{g}\CR(\two{g}\ot S(\one{g}))\ot b\#_{\sigma}S(\four{g})\\
    =&b'\#_{\sigma}\CR(S^{-1}(\three{h})\ot \four{h})S^{-1}(\two{h})\ot b\#_{\sigma}\one{h}
\end{align*}
where the 2nd step use     of \cite[Lem.~2.2.2]{Ma:book} and the
3rd step uses \cite[Prop.~2.2.4]{Ma:book}. We use invariance of $\CR$ under $S\tens S$ to recognise the answer in terms of the Drinfeld maps  $u^{-1},v$ in \cite[Prop.~2.2.4]{Ma:book}.\end{proof}

Note that this construction is very different from the one for a cocycle Hopf algebroid of associative type in Proposition~\ref{prop. antipode of cleft extension}, hence it is clear that $S$ obeying the conditions in Lemma~\ref{lemS} need not be unique.
Indeed, for the two antipodes to coincide, we would need
\begin{align*}
    \sigma^{-1}(S^2(\two{h}), S(\one{h}))(S^2(\three{h})\triangleright b)=&u^{-1}(h)b,\quad  (\one{h}\la b)\sigma(\two{h}, S(\three{h}))=v(h) b,
\end{align*}
for all $h\in H$ and $b\in B$. Setting $b=1$, this requires  $\sigma^{-1}(S^2 (h\t),S(h\o))=u^{-1}(h)$ and $\sigma(h\o, Sh\t)=v(h)$ for all $h$, which requires $\sigma$ to be $k$-valued and $h\la b=\eps(h)b$, the trivial action. For an example, we can let $\sigma=\CR$ be viewed as a cocycle with values in $k$ which, together with the trivial action means $B\#_\sigma H=B\tens ({}_\sigma H)$, where ${}_\sigma H$ is a version of (\ref{eqn. P cotwist}) one-sided cotwisted on the left with the left coaction of $H$ on itself, i.e.
\begin{align}\label{sigmaH}
    h\cdot g=\sigma(\one{h}\ot \one{g})\two{h}\two{g}
\end{align}
and with $\sigma=\CR$ in the present case. Thus, the two antipodes can agree in very special cases, but in general are different.

\section{More examples of associative type}\label{sec5}

There are several ways to arrive at a cleft extension of associative type. The most obvious is $H$ cocommutative and $\sigma$ has its image in $Z(B)$. In this case, the condition in Definition~\ref{def: assoc type} is clearly automatic.

\subsection{Galois objects}

The next most obvious case is $\la$ trivial  $h\la b=\eps(h) b$ and in this case (\ref{eqn. assoc type}) tells us that the extension is of associative type iff the image of $\sigma$ is in $Z(B)$. We saw how this arises from gauge transform by an inner action in Example~\ref{ex inner}, but clearly one could have a more general form of $\sigma$.

The simplest such example is that of a {\em Galois object}. By definition, this as a Hopf-Galois extension where $B=k$, the field.
%\blu{We know by [R. G¡§unther, Crossed products for pointed Hopf algebras] and [H.F. Kreimer, P.M. Cook II, Galois theories and normal bases], if $H$ is a pointed Hopf algebra or finite-dimensional Hopf algebra, then any H-Galois
%object is cleft.}
 In the cleft case,  it means $P\cong H$ as a vector space and clearly in this case $\la$ is trivial and the conditions for a cocycle $\sigma$ reduce to those of a Drinfeld 2-cocycle. Then every such Galois object is a 1-sided cotwist
\[  B\#_\sigma H={}_\sigma H\]
as in (\ref{sigmaH}).

By contrast, the associated Hopf algebroid is now just a Hopf algebra and we clearly have from Proposition~\ref{prop cocy Hopf algebroid} and Theorem \ref{theorem. 2-cocycle twist} that
\[  B^e\#_\sigma H=(B^e\# H)^{\tilde{\sigma}}= H^{\sigma},\]
where the second step uses $B^{e}\# H=H$ as a Hopf algebra and $\tilde{\sigma}=\sigma$ by Lemma \ref{lem. 2-cocycle on bialgebroid} as a Drinfeld cotwist. This is in agreement with both \cite{schau},  where Galois objects were looked at, and with our twisting Theorem~\ref{theorem. 2-cocycle twist} with $\tilde\sigma=\sigma$ as the base is trivial.

This also agrees with our observations about Drinfeld cotwists. Thus, if we start with a cleft Galois object above with cocycle $\sigma$ and structure quantum group $H$, and then cotwist by $\chi$ then
\[  B\#_{\sigma*\chi^{-1}} H^\chi={}_{\sigma*\chi^{-1}} (H^\chi)={}_\sigma H_\chi,\quad B^e\#_{\sigma*\chi^{-1}} H^\chi= (H^{\chi})^{\sigma*\chi^{-1}}=H^{\sigma},\]
where $H_\chi$ is one-sided right cotwist using (\ref{eqn. P cotwist}) and the right coaction by the coproduct. We used Proposition~\ref{prop. twist cleft Galois object} for the twisted cocycle extension and then our previous observations. The condition in Proposition~\ref{prop. cocy as twist}  is automatic as $\la$ is trivial.

\subsection{Affine quantum group}

The abstract structure here \cite[Lem.~6.3.15]{Ma:book} starts with algebra maps  $B\hookrightarrow P\twoheadrightarrow H$ between Hopf algebras with $P=B\rcocross H$ as a coalgebra (so with respect to a coaction $\Delta_R(h)=h\uo\tens h\ut\in H\tens B$), the inclusion given by $b\mapsto 1\tens b$ and the surjection by $b\tens h\mapsto\eps(b)h$. Suppose that $j:H\to P$ given by $j(h)=1\tens h$ obeys $bj(h)=b\tens h=j(h)b$. Then  $j^{-1}(h)=(Sj(h\uo))h\ut$ is the convolution-inverse of $j$ and makes $P$ a cleft cocycle extension with
\[ \sigma(h, g)=j(h\o)j(g\o)(Sj(h\t\uo g\t\uo)) h\t\ut g\t\ut\]
\[ \sigma^{-1}(h, g)=j(\one{h}\one{g})(Sj(\two{g}{}^{(1)})\two{g}{}^{(2)})(Sj(\two{h}{}^{(1)})\two{h}{}^{(2)}) \]
and trivial action.  Moreover, the coalgebra is cocleft and the extension is a cocycle bicrossproduct $P=B{}_\sigma\bicross H$. The concrete example  in \cite[Ex. 6.3.14]{Ma:book} is $B=\Bbb C \Bbb Z=\Bbb C[c,c^{-1}]$ and $H=U_q(L sl_2)$  the quantum loop group version of $U_q(sl_2)$. By definition, this can be taken as generated by $k,k^{-1}, e_0,f_0,e_1, f_1$ with
\[  k e_0 k^{-1}=q^{-2}e_0,\quad ke_1k^{-1}=q^2e_1,\quad kf_0k^{-1}=q^2f_0,\quad kf_1k^{-1}=q^{-2}f_1\]
\[ q^2 e_0 f_0-f_0e_0={k^{-2}-1\over q-q^{-1}},\quad q^2 e_1f_1-f_1e_1={k^2-1\over q-q^{-1}}\]
and certain $q$-Serre relations involving powers of $e_0,f_0$. The coproduct and antipode are specified by $k$ grouplike and
\[ \Delta e_0=e_0\tens k^{-1}+1\tens e_0,\quad \Delta e_1=e_1\tens k+1\tens e_1,\quad Se_{0}=-e_{0}k,\quad Se_{1}=-e_{0}k^{-1},\]
and similarly for $f_i$. The algebra has a grading $|\ |$ by the total power of $e_0,f_0$ when expressions are ordered with all $e_i$ to the left of all $f_j$, which we view as a right coaction of $B$ by $\Delta_R h= h\tens c^{| h|}$ for any $h\in U_q(Lsl_2)$ of homogeneous degree, and we make the associated cross product coalgebra. The 2-cocycle has, for example
\[ \sigma(f_0, e_0)={1-c^2\over q-q^{-1}}\]
 and  $\Bbb C\Bbb Z{}_\sigma\bicross U_q(Lsl_2)$  is isomorphic to $\widehat{U_q(sl_2)}$. (The latter has similar relations but with some central elements $c$ appearing from the cocycle.)

Here, we observe that by the same methods, one can compute that
\[ \sigma(e_{1}, k^{\pm 1})=\sigma(k^{\pm1}, e_{0})=\sigma(e_{0},  k)=0,\quad \sigma(k^{-1},  k)=\sigma(k,  k^{-1})=1\]
\[\sigma^{-1}(k^{\pm1},  S e_{0})=\sigma^{-1}(Se_{0},  k^{\pm1})=0,\quad \sigma^{-1}(k^{-1},  k)=\sigma^{-1}(k,  k^{-1})=1.\]
Hence the antipode of the cocycle Hopf algebroid $B^e\#_\sigma H$ by  Proposition \ref{prop. antipode of cleft extension}, simplifies to
\begin{align*}
    S(b\ot b'\#_{\sigma} e_{0})=&b'\ot \sigma^{-1}(S^{2}(k^{-1}), S(e_{0}))b\#_{\sigma}S(k^{-1})+b'\ot \sigma^{-1}(S^{2}(e_{0}), S(1))b\#_{\sigma}S(e_{0})\\
    &\quad+b'\ot \sigma^{-1}(S^{2}(1), S(1))b\#_{\sigma}S(e_{0})\\
    =&b'\ot \sigma^{-1}(S^{2}(1), S(1))b\#_{\sigma}S(e_{0})=b'\ot b\#_{\sigma}S(e_{0})\end{align*}
    and similarly
    \begin{align*}
    S^{-1}(b\ot b'\#_{\sigma} e_{0})=b'\ot b\#_{\sigma}S^{-1}(e_{0})
\end{align*}
i.e. just given by flip and $S^{\pm1}$ on $H$. The same applies with $e_{0}$  replaced by $e_{1}, f_{i}$ and $k^{\pm}$. We also know from our twisting theory that this Hopf algebroid is isomorphic to a cotwist of $B^e\tens H$ (as the action is trivial) by a cotwist $\tilde\sigma$ as in Lemma \ref{lem. 2-cocycle on bialgebroid}, for example
\begin{align*}
    \tilde{\sigma}(b\ot b'\# e_{0}, d\ot d'\# f_{0})=\frac{bdd'b'(1-c^{2})}{q-q^{-1}}.
\end{align*}

%\[S^{\tilde{\sigma}}(b\ot b'\# e_{0})=b'\ot b\#S(e_{0})\quad (S^{\tilde{\sigma}})^{-1}(b\ot b'\# e_{0})=b'\ot b\#S^{-1}(e_{0}).\]

\subsection{Quantum Weyl group}

The abstract structure here (a left-right swap of \cite[Prop.~6.3.12]{Ma:book}) starts with a Hopf algebra $B$ with Drinfeld twist cocycle $\psi$  and we let ${}^\psi B$ be the twisted Hopf algebra with conjugated coproduct. We also need  $T: B\to {}^\psi B$ a Hopf algebra map such that
\[ x^{-1}ax=T^2(a),\quad T(x)=x,\quad \Delta x=(x\tens x)((T\tens T)(\psi))\psi, \quad \eps x=1
\]
for some invertible $x\in B$. Then there is a cocycle cross product algebra $B\#_{\sigma}\Bbb C\Bbb Z_2$ where if $w\in \Bbb C\Bbb Z_2$ is the generator of $\Bbb Z_2$, the cocycle and action are  \[w\la b=T(b),\quad \sigma(w, w)=x^{-1}\quad \sigma(w, 1)=\sigma(1, w)=\sigma(1, 1)=1.\]
The result is then a cocycle bicrossproduct Hopf algebra $B{}_\sigma\bicross^{\psi} \Bbb C\Bbb Z_2$ with coproduct
\[ \Delta(b\tens 1)=  b\o\tens 1\tens  b\t\tens 1,\quad \Delta(b\tens w)=b\o\psi^1\tens w\tens  b\t,
\psi^2\tens w \]
where $\psi=\psi^1\tens\psi^2$ (sum understood).   The concrete example in \cite[Ex 6.3.13]{Ma:book} is to take $B$ a ribbon Hopf algebra with ribbon element $\nu$ and $T$ an algebra automorphism and anticoalgebra map such that $T^2=\id$ and $T(\nu)=\nu$.  The ribbon element is central and hence so is the image of $\sigma$. We take $\psi=\CR$, the quasitriangular structure regarded as a cocycle. The result of the construction is an extension of $B$ such that $w^2=\nu$ in the extended algebra. The required `Lusztig automorphism' $T$ exists for the standard q-deformation enveloping algebras\cite{MaSoi} as well as for reduced quantum groups at roots of unity. The ribbon element in the latter case is given explicitly in \cite{LyuMa}.

%Specifically, for a reduced quantum group $B={\mathfrak u}_q(sl_2)$ at a primitive odd $n$th root of unity in the conventions of \cite{AziMa}, one has
%\[ T(K)=K^{-1},\quad T(E)=-q F,\quad T(F)=-q^{-1}E \]
%with ribbon element
%\[\nu={\sum_{c=0}^{n-1}q^{c^2}\over n}\sum_{a,b=0}^{n-1} {(q^{-1}-q)^a\over[a]_{q^{-1}}}q^{m(m+1)(a-b-1)^2} F^a K^b E^a\]
%if $n=2m+1$ and $[n]_q=(1-q^n)/(1-q)$ is the asymmetric $q$-integer. Here $p=q^{-m}$ is such that $p=q^2$, $q^{m(m+1)}=p^{-{1\over 2}}$ and ${\mathfrak u}_q(sl_2)\cong u_p(sl_2)$ in the more usual conventions as explained in \cite{AziMa}.}

Here, we compute the antipode for the associated cocycle Hopf algebroid $B^e\#_{\sigma}\Bbb C\Bbb Z_2$ using
 Proposition \ref{prop. antipode of cleft extension} as
\begin{align*}
    S^{\pm 1}(b\ot b'\#_{\sigma} 1)&=b'\ot b\#_{\sigma} 1\\
        S(b\ot b'\#_{\sigma} w)&=b'\ot \sigma^{-1}(w\ot w)T(b)\#_{\sigma} w^{-1}=b'\ot \nu^{-1}T(b)\#_{\sigma} w,\\
    S^{-1}(b\ot b'\#_{\sigma} w)&=T(b')\sigma(w\ot w)\ot b\#_{\sigma} w^{-1}=T(b')\nu\ot b\#_{\sigma} w.
\end{align*}
Also, we know that this Hopf algebroid is a cotwist of $B^e\#\Bbb C\Bbb Z_2$ by a cocycle in Lemma~\ref{lem. 2-cocycle on bialgebroid},
\begin{align*}
\tilde{\sigma}(b\ot b'\# 1, c\ot c'\# 1)&=bcc'b',  \quad \tilde{\sigma}(b\ot b'\# 1, c\ot c'\# w)=bc(w\la c')b'=bcT(c')b',\\
\tilde{\sigma}(b\ot b'\# w, c\ot c'\# 1)&=b(w\la (cc'b'))=bT(cc'b'),\\
    \tilde{\sigma}(b\ot b'\# w, c\ot c'\# w)&=b(w\triangleright c)\sigma(w\ot w)((w)^{2}\triangleright c')(w\triangleright b')=bT(c)\nu c'T(b').
\end{align*}
with antipode
 \begin{align*}
    (S^{\tilde{\sigma}})^{\pm}(b\ot b'\# 1)&=b'\ot b\# 1,\\
    (S^{\tilde{\sigma}})(b\ot b'\# w)=&b'\sigma^{-1}(w\ot w)\ot T(b)\# w^{-1}=b'\nu^{-1}\ot T(b)\# w,\\
    (S^{\tilde{\sigma}})^{-1}(b\ot b'\# w)=&T(b')\ot b(w\la\sigma(w\ot w))\# w
    =T(b')\ot b\nu\# w,
\end{align*}
where $w^{-1}=w$ and in the last step we use $w\la \nu=\nu$.
\subsection*{\bf Acknowledgment:} XH would like to thank Prof. Dr. Giovanni Landi, Dr. Song Cheng and A. Ghobadi for discussions. XH was supported by Marie Curie Fellowship HADG - 101027463 agreed between QMUL  and the  European Commission.


\begin{thebibliography}{99}

\bibitem{ppca}
P. Aschieri, P. Bieliavsky, C. Pagani, A. Schenkel,
{Noncommutative principal bundles through twist deformation}.
Commun. Math. Phys.  352 (2017) 287--344.

\bibitem{BegMa}
E.J. Beggs and S. Majid, {Quantum Riemannian Geometry}, Grundlehren der mathematischen Wissenschaften, Vol. 355, Springer (2020) 809pp.

\bibitem{Bich}
J. Bichon,
{Hopf-Galois objects and cogroupoids},
Rev. Un. Mat. Argentina, 5 (2014) 11--69.

\bibitem{BC}
J. Bichon and G. Carnovale
{Lazy cohomology: an analogue of the Schur multiplier for arbitrary Hopf algebras},
J. Pure and Appl. Algebra, 204 (2006) 627--665.


\bibitem{Boehm}
G. B\"ohm,
{Hopf algebroids}, in Handbook of Algebra, Vol. 6,
North-Holland, 2009, pp173--235.

\bibitem{BS}
G. B\"ohm, K. Szlach\'anyi
{Hopf algebroids with bijective antipodes:
axioms, integrals, and duals}, J. Algebra, 274 (2004) 708--750.

\bibitem{brz-tr}
T. Brzezi\'nski,
{Translation map in quantum principal bundles},
J. Geom. Phys. 20 (1996) 349--370.

\bibitem{BrzMa}
T. Brzezi\'nski and S. Majid, {Quantum group gauge theory on quantum spaces}, Commun. Math. Phys. 157 (1993) 591--638.


\bibitem{BW}
T. Brzezi\'nski, R. Wisbauer,
{Corings and comodules},
London Mathematical Society Lecture Notes Vol 309, CUP 2003.

\bibitem{D89}
Y. Doi,
{Equivalent crossed products for a Hopf algebra}, Commun. Alg. 17 (1989), 3053--3085.

\bibitem{DT86}
Y. Doi, M. Takauchi,
{Cleft comodule algebras for a bialgebra}, Commun. Alg. 14 (1986), 801--818.

\bibitem{AB}
A. Ghobadi,
{Hopf algebroids, bimodule connections and noncommutative geometry}, arXiv:2001.08673 [math.qa].

\bibitem{HL20}
X. Han, G. Landi,
{Gauge groups and bialgebroids},
Lett. Math. Phys. Vol 111, article number 140 (2021).


\bibitem{kassel-review}
C. Kassel,
{Principal fiber bundles in non-commutative geometry}, in Quantization, Geometry and Noncommutative Structures in Mathematics and Physics,
Mathematical Physics Studies, Springer 2017, pp. 75--133.

\bibitem{LyuMa}V. Lyubashenko and S. Majid, {Braided groups and quantum Fourier transform}, J. Algebra. 166 (1994) 506--528

\bibitem{KirillMackenzie}
K. Mackenzie,
{General theory of Lie groupoids and Lie algebroids}.
London Mathematical Society Lecture Notes Vol 213, CUP 2005.



\bibitem{Ma:mor}S. Majid, {More examples of bicrossproduct and double cross product Hopf algebras}, Isr. J. Math. 72 (1990) 133--148.

\bibitem{Ma:book}
S. Majid,
{Foundations of quantum group theory},
 CUP 1995 and 2000.

\bibitem{Ma:non}
S. Majid,
{Cross product quantisation, nonabelian cohomology and twisting of Hopf algebras}, in  Generalized Symmetries in Physics, World Sci. (1994) 13-41 (arXiv:hep-th/9311184v1)

\bibitem{Masuoka}
A. Masuoka,
{Cleft extensions for a Hopf algebra generated by a nearly primitive element},
Comm. Alg. 22 (1994) 4537--4559.



\bibitem{MaSoi}S. Majid and Ya. S. Soibelman, {Bicrossproduct structure of the quantum Weyl group}, J. Algebra 163 (1994) 68--87.

\bibitem{mont}
S. Montgomery,
{Hopf algebras  and their actions on rings},  AMS, 1993.

\bibitem{schau}
P. Schauenburg,
{Hopf bi-Galois extensions},
Commun. Alg. 24 (1996) 3797--3825.

\bibitem{schau1}
P. Schauenburg,
{Duals and doubles of quantum groupoids ($\times_R$ -Hopf algebras)}, in New
Trends in Hopf Algebra Theory, AMS Contemp.
Math. 267 (2000) 273--299.

\bibitem{Swe}M. Sweedler, {Hopf Algebras}, Benjamin, 1969.

\bibitem{kornel}
K. Szlachanyi,
{Monoidal Morita equivalence},  AMS Contemp. Math. 391 (2005) 353--369.


\end{thebibliography}
\end{document}